\documentclass[letter,final,oneside,notitlepage,10pt]{article}

%&latex
\usepackage{amssymb}
\usepackage{amsmath}
\usepackage{amstext}
\usepackage{mathrsfs}
\usepackage[all]{xy}
\usepackage{xstring}
\usepackage{amsthm}
\usepackage[]{geometry}
\usepackage{graphicx}
\usepackage[margin=2cm,font=normalsize,labelfont=bf]{caption}
\usepackage{verbatim}
\usepackage{enumerate}
\usepackage{fancyhdr}
\usepackage[normalem]{ulem}
\usepackage{hyperref}
\usepackage[latin1]{inputenc}
\usepackage[english]{babel}
\usepackage{rotating}

%%% Markos %%%

\def\Z {\mathbb{Z}}

\def\C {\mathbb{C}}
\def\im{\mathrm{i}}
\def\id{\mathrm{id}}
\def\h {\check{\mathrm{H}}}

\def\trivlin{\mathbf{I}}

\def\quand{\quad\text{ and }\quad}
\def\quomma{\quad\text{, }\quad}

\def\nobr{~\hspace{-0.26em}}
\def\maps{\nobr:\nobr}
\def\df{\nobr := \nobr}
\def\eq{\nobr = \nobr}

\def\lift{\mathcal{L}\!i\!f\!t}

\def\lli#1{\,_{#1}\!}

\def\upi{\underline{\pi}}
\def\class{cl\!}

\def\idmorph#1{#1_{dis}}
\renewcommand{\varepsilon}{\epsilon}
\def\bigset#1#2{\left\lbrace\;\begin{minipage}[c]{#1}\begin{center}#2\end{center}\end{minipage}\;\right\rbrace}

\newcommand\erf[1]{(\ref{#1})}

\newlength{\myl}

\newcommand{\ueins}{{\mathrm{U}}(1)}

\newcommand{\spin}[1]{{\mathrm{Spin}}(#1)}

\newcommand{\str}[1]{{\mathrm{String}}(#1)}

\def\des{\mathcal{D}\!esc}

\def\triv{\mathcal{T}\!\!riv}

\def\calg {\mathcal{G}}

\def\brackets#1{\IfStrEq{#1}{-}{}{(#1)}}
\def\buntech#1#2{\mathcal{B}\hspace{-0.01em}un^{#2}_{\hspace{-0.04em}#1}}
\def\bun#1#2{\buntech{#1}{}\brackets{#2}}
\def\zwoabun#1#2{2\text{\text{-}}\buntech{#1}{}\brackets{#2}}

\def\grbtech#1{\mathcal{G}\hspace{-0.06em}r\hspace{-0.06em}b_{\hspace{-0.07em}#1}}
\def\grb#1#2{\grbtech#1\brackets{#2}}
\def\grbor#1#2#3{\grbtech#1\brackets{#2}^{#3\text{-}or}}
\def\grbcon#1#2{\grbtech{#1}^{\nabla\!}\brackets{#2}}

\def\Desc {\mathcal{D}\!esc}

\def\Desc{\mathcal{D}\!e\!s\!c}

\newcommand{\alxydim}[2]{\begin{aligned}\xymatrix#1{#2}\end{aligned}}
\renewcommand{\to}{\!\xymatrix@R=0cm@C=1.4em{\ar[r] &}}
\renewcommand{\mapsto}{\!\xymatrix@R=0cm@C=1.4em{\ar@{|->}[r] &}\!}
\renewcommand{\Rightarrow}{\!\xymatrix@R=0cm@C=1.4em{\ar@{=>}[r] &}\!}
\renewcommand{\Leftarrow}{\!\xymatrix@R=0cm@C=1.4em{\ar@{<=}[r] &}\!}
\newcommand{\incl}{\!\xymatrix@R=0cm@C=1.4em{\ar@{^(->}[r] &}\!}

\renewcommand\Leftrightarrow{\!\xymatrix@R=0cm@C=1.4em{\ar@{<=>}[r] &}\!}

\sloppy

\makeatletter

%%%% Variable %%%%

\newcounter{denseversion}
\setcounter{denseversion}{0}
\newcounter{authorcounter}
\newcounter{adresscounter}

\def\title#1{\gdef\@title{#1}}
\def\@title{}
\def\subtitle#1{\gdef\@subtitle{#1}}
\def\@subtitle{}

\def\authortagsused{0}
\def\adresstag#1{\if!#1!\else$^{\;#1\;}$\fi}

\renewcommand{\author}[2][]{
  \stepcounter{authorcounter}
  \if!#1!\else\gdef\authortagsused{1}\fi
  \ifnum\value{authorcounter}=1
    \def\@authorstringa{#2\adresstag{#1}}
    \def\@authorstringb{#2}
    \def\@authorstringc{#2\adresstag{#1}}
  \else
    \g@addto@macro\@authorstringa{\ and #2\adresstag{#1}}
    \g@addto@macro\@authorstringb{\ and #2}
    \g@addto@macro\@authorstringc{\\#2\adresstag{#1}}
  \fi}
\def\@author{\ifnum\value{denseversion}=0\@authorstringa\else\@authorstringb\fi}

\def\@adressstringa{}
\def\@adressstringb{}
\newcommand{\adress}[2][]{
  \stepcounter{adresscounter}
  \ifnum\value{adresscounter}=1
    \g@addto@macro\@adressstringa{\ifnum\authortagsused=0\def\br{\\}\else\def\br{, }\fi\adresstag{#1}#2}
    \g@addto@macro\@adressstringb{\def\br{\\}\adresstag{#1}\parbox[t]{14cm}{#2}}
  \else
    \g@addto@macro\@adressstringa{\\[\bigskipamount]\adresstag{#1}#2}
    \g@addto@macro\@adressstringb{\\[\medskipamount]\adresstag{#1}\parbox[t]{14cm}{#2}}
  \fi}
\def\@adress{\ifnum\value{denseversion}=0\@adressstringa\else\@adressstringb\fi}

\def\preprint#1{\gdef\@preprint{#1}}
\def\@preprint{}
\def\keywords#1{\gdef\@keywords{#1}}
\def\@keywords{}
\def\msc#1{\gdef\@msc{#1}}
\def\@msc{}
\def\email#1{
   \gdef\@email{#1}
   \g@addto@macro\@authorstringc{ {\it (#1)}}}
\def\@email{}
\def\dedication#1{\gdef\@dedication{#1}}
\def\@dedication{}

\def\mybaselinestretch#1{\gdef\@mybaselinestretch{#1}}
\def\@mybaselinestretch{}

\def\refname{References}

%%%% Styles %%%%

\newlength{\myparskip}
\newlength{\myproofparskip}

\setlength{\myparskip}{1ex}
\setlength{\myproofparskip}{0ex}
\setlength{\parskip}{\myparskip}
\mybaselinestretch{1.2}
\renewcommand{\baselinestretch}{\@mybaselinestretch}

\def\denseversion{
  \setcounter{denseversion}{1}
  \newgeometry{left=2cm,right=2cm,top=2cm}
  \mybaselinestretch{1.1}
  \renewcommand{\baselinestretch}{\@mybaselinestretch}
  \normalfont
  \fancyfoot[C]{\itshape{\hspace{2.5cm}--$\,\,$\thepage$\,\,$--}}}

\pagestyle{fancy}
\fancyhf{}

\fancyfoot[C]{\itshape{--$\,\,$\thepage$\,\,$--}}
\renewcommand{\emph}[1]{\def\reserved@a{it}\ifx\f@shape\reserved@a\uline{#1}\else\textit{#1}\fi}

\newcommand{\mytableofcontents}{
   \ifnum\value{denseversion}=0
     \tableofcontents
   \else
     \renewcommand{\baselinestretch}{0.8}
     \normalfont
     \tableofcontents
     \renewcommand{\baselinestretch}{\@mybaselinestretch}
     \normalfont
   \fi}

%%%% Theoreme %%%%

\newcounter{mythm}[subsection]
\newcounter{mainthm}

\def\setsecnumdepth#1{
  \setcounter{secnumdepth}{#1}
  \setcounter{mythm}{0}
  \ifnum \c@secnumdepth >0
    \ifnum \c@secnumdepth >1
      \def\themythm{\thesubsection.\arabic{mythm}}
      \numberwithin{equation}{subsection}
      \renewcommand\theequation{\thesubsection.\arabic{equation}}
    \else
      \def\themythm{\thesection.\arabic{mythm}}
      \numberwithin{equation}{section}
      \renewcommand\theequation{\thesection.\arabic{equation}}
    \fi
  \else
    \def\themythm{\arabic{mythm}}
  \fi}

\newenvironment{mythmenv}{\strut\ \setlength{\parskip}{\myproofparskip}}{\setlength{\parskip}{\myparskip}}

\newlength{\mythmskip}
\newlength{\mythmtopskip}
\setlength{\mythmskip}{\medskipamount}
\setlength{\mythmtopskip}{\parskip}
\addtolength{\mythmtopskip}{\mythmskip}

\newtheoremstyle{mythmstylea}{\mythmtopskip}{\mythmskip}{\it}{}{\bf}{.}{0em}{}
\newtheoremstyle{mythmstyleb}{\mythmtopskip}{\mythmskip}{}{}{\bf}{.}{0em}{}

\theoremstyle{mythmstylea}
\newtheorem{mytheorem}[mythm]{Theorem}
\newtheorem{mydefinition}[mythm]{Definition}
\newtheorem{mycorollary}[mythm]{Corollary}
\newtheorem{myproposition}[mythm]{Proposition}
\newtheorem{mylemma}[mythm]{Lemma}

\newtheorem{mymaintheorem}[mainthm]{Theorem}
\newtheorem{mymaincorollary}[mainthm]{Corollary}
\newtheorem{mymaindefinition}[mainthm]{Definition}

\theoremstyle{mythmstyleb}
\newtheorem{myremark}[mythm]{Remark}
\newtheorem{myexample}[mythm]{Example}

\newenvironment{theorem}[1][]{\begin{mytheorem}[#1]\begin{mythmenv}}{\end{mythmenv}\end{mytheorem}}
\newenvironment{definition}[1][]{\begin{mydefinition}[#1]\begin{mythmenv}}{\end{mythmenv}\end{mydefinition}}
\newenvironment{corollary}[1][]{\begin{mycorollary}[#1]\begin{mythmenv}}{\end{mythmenv}\end{mycorollary}}
\newenvironment{proposition}[1][]{\begin{myproposition}[#1]\begin{mythmenv}}{\end{mythmenv}\end{myproposition}}
\newenvironment{lemma}[1][]{\begin{mylemma}[#1]\begin{mythmenv}}{\end{mythmenv}\end{mylemma}}
\newenvironment{remark}[1][]{\begin{myremark}[#1]\begin{mythmenv}}{\end{mythmenv}\end{myremark}}
\newenvironment{example}[1][]{\begin{myexample}[#1]\begin{mythmenv}}{\end{mythmenv}\end{myexample}}

\newenvironment{maintheorem}[1]{\begin{mymaintheorem}\begin{mythmenv}}{\end{mythmenv}\end{mymaintheorem}}

\renewenvironment{proof}[1][Proof]{\noindent #1. \begin{mythmenv}}{\ \strut\hfill{$\square$}\end{mythmenv}\medskip}

%%%% Titel und Überschriften %%%%

\def\maketitle{
  \setlength{\parskip}{\myparskip}  
  \newpage
  \noindent
  \begin{center}
    \LARGE\@title\\
    \if!\@subtitle!\else \smallskip\LARGE\@subtitle\\\fi
    \bigskip
    \if!\@author!\else\bigskip\large\@author\\\fi
    \ifnum\value{denseversion}=0
      \if!\@adress!\else     \bigskip\normalsize\@adress\\\fi
      \if!\@email!\else\ifnum\value{authorcounter}=1\bigskip\normalsize\textit{\@email}\\\else\fi\fi
    \else
    \fi
    \if!\@dedication!\else \bigskip\normalsize{\@dedication}\\\fi
  \end{center}
  \ifnum\value{denseversion}=0\vskip 1.5cm\else\vskip0.5cm\fi
  \thispagestyle{empty}}

\def\br{
  \ifnum\value{denseversion}=0
  \\
  \else\fi
}

\def\showkeywords{\begin{flushleft}\footnotesize\textbf{Keywords}: \@keywords\end{flushleft}}
\def\showmsc{\begin{flushleft}\footnotesize\textbf{MSC 2010}: \@msc\end{flushleft}}

%%%% KoBib %%%%

\def\kobiburl#1{
   \IfBeginWith
     {#1}
     {http://arxiv.org/abs/}
     {\kobibarxiv{#1}}
     {\kobiblink{#1}}}
\def\kobibarxiv#1{\href{#1}{\texttt{[arxiv:\StrGobbleLeft{#1}{21}]}}}
\def\kobiblink#1{Available as: \href{#1}{\texttt{#1}}}

\newcommand{\etalchar}[1]{$^{#1}$}
\def\kobib#1{
  \begin{raggedright}
  \ifnum\value{denseversion}=0\else\small\fi

  \end{raggedright}
  \noindent
  \ifnum\value{denseversion}=0\else
      \if!\@authorstringc!\else
        \ifnum\authortagsused=0\ifnum\value{authorcounter}>1\normalsize\@authorstringc\\[\medskipamount]\else\fi\else\normalsize\@authorstringc\\[\medskipamount]\fi       \fi
      \if!\@adress!\else\normalsize\@adress\\\fi
      \ifnum\authortagsused=0\ifnum\value{authorcounter}=1\if!\@email!\else\normalsize\textit{\@email}\\\fi\else\fi\else\fi
  \fi}

%%%% Drafts %%%%

\newenvironment{commentfigure}{\begin{comment}}{\end{comment}}
\newenvironment{sidewayscommentfigure}{\begin{minipage}}{\end{minipage}}

\def\showcomments{ -- Comments suppressed}

\newif\if@fewtab\@fewtabtrue{
  \count255=\time\divide\count255 by 60
  \xdef\hourmin{\number\count255}
  \multiply\count255 by-60\advance\count255 by\time
  \xdef\hourmin{\hourmin:\ifnum\count255<10 0\fi\the\count255}}
\def\ps@draft{
  \let\@mkboth\@gobbletwo
  \def\@oddfoot{
    \hbox to 7 cm{\tiny \versionno\hfil}
    \hskip -7cm\hfil\rm\thepage\hfil{\tiny\draftdate}}
  \def\@oddhead{}
  \def\@evenhead{}
  \let\@evenfoot\@oddfoot}
\def\draftdate{\number\month/\number\day/\number\year\ \ \ \hourmin }
\newcommand\version[1]{
  \typeout{}\typeout{#1}\typeout{}
  \vskip-1.7cm \centerline{\fbox{{\normalsize\tt DRAFT -- #1 -- 
  \draftdate\showcomments}}} \vskip0.92cm}
\def\draft#1{
  \def\versionno{#1}
  \pagestyle{draft}\thispagestyle{draft}
  \gdef\@ntitle{\version\versionno \@title}
  \global\def\draftcontrol{1}}
\global\def\draftcontrol{0}

\makeatother

\usepackage[latin1]{inputenc}
\usepackage[english]{babel}

\def\quot#1{``#1''}

\hyphenation{
axiom
Berkeley
bi-jec-tion
bi-mo-du-le
ca-te-go-ri-fied
ca-no-ni-cal
co-boun-da-ry
col-la-bo-ra-tion
co-cy-cle
co-ho-mo-lo-gi-cal
co-ho-mo-lo-gy
cor-res-pon-den-ce
cur-va-tu-res
dif-feo-lo-gi-cal
Dijkgraaf
equi-va-lent
equi-va-ri-ant
equi-va-len-ces
geo-me-tri-cal
geo-me-tric
ge-ne-ral
ger-bes 
ger-be
ho-mo-mor-phism
ho-lo-no-my
in-ver-ti-ble
iso-mor-phism
ma-ni-fold
mo-no-id-al
mo-dels
mul-ti-pli-ca-ti-ve
or-ga-ni-zers
ori-gi-nal
pa-ral-lel
po-ly-no-mi-als
pre-print
Pro-po-si-tion
pseu-do-na-tu-ral
Teichner
tri-vi-ali-za-tion
}

\title{Lifting Problems and Transgression for Non-Abelian Gerbes}

\author[a]{Thomas Nikolaus}
\email{thomas1.nikolaus@mathematik.uni-regensburg.de}

\author[a,b]{Konrad Waldorf}
\email{konrad.waldorf@mathematik.uni-regensburg.de}

\adress[a]{Fakultät für Mathematik\br Universität Regensburg\\Universitätsstraße 31\br D-93053 Regensburg}

\adress[b]{Hausdorff Research Institute for Mathematics\\Poppeldorfer Allee 45\br D-53115 Bonn}

\keywords{non-abelian gerbe, non-abelian cohomology, Lie 2-group, transgression, loop space, string structure}
\msc{Primary 55R65, Secondary 18G50, 22A22, 53C08, 55N05}

\usepackage[normalem]{ulem}
\usepackage{amstext}

\newcommand\leer[1]{}

\begin{document}

%\draft{Version 3.1, Konrad, 20.12.2011}
%\comments
\denseversion

\setsecnumdepth{3}

\maketitle

\begin{abstract}
We discuss lifting and reduction problems for bundles and gerbes in the context of a Lie 2-group. We obtain a geometrical formulation (and a new proof) for the exactness of Breen's long exact sequence in non-abelian cohomology. We use our geometrical formulation in order to define a transgression map in non-abelian cohomology. This transgression map relates the degree one non-abelian cohomology of a smooth manifold (represented by non-abelian gerbes) with the degree zero non-abelian cohomology of the free loop space  (represented by principal bundles). We prove several properties for this transgression map. For instance,  it reduces -- in case of a Lie 2-group with a single object -- to the ordinary transgression  in ordinary cohomology. We describe applications  of our results to string manifolds: first, we obtain a new  comparison theorem for different notions of string structures. Second,  our  transgression map establishes a direct relation between string structures and spin structures on the loop space.
\showkeywords
\showmsc
\end{abstract}

%noarxiv
%nolinks

\mytableofcontents

\setsecnumdepth{1}

%%%%%%%%%%%%%%%%%%%%%%%%%%%%%%%%%%%%%%%%%%%%%%%%%%%%%%%%%%%%%%%%%%%%%%%%%%%%%%%%%%%%%%%%%%%%%%%%%%%%%%%%%%%%%%%%%%%%%%%%%%%%%%
\section{Introduction}
%%%%%%%%%%%%%%%%%%%%%%%%%%%%%%%%%%%%%%%%%%%%%%%%%%%%%%%%%%%%%%%%%%%%%%%%%%%%%%%%%%%%%%%%%%%%%%%%%%%%%%%%%%%%%%%%%%%%%%%%%%%%%%

\label{sec:intro}

In the present paper we study categorical and bicategorical extensions of the non-abelian cohomology of a smooth manifold $M$. In degree zero, non-abelian cohomology with values in a Lie groupoid $\Gamma$ is a set denoted $\h^0(M,\Gamma)$, in which a cocycle with respect to a cover of $M$ by open sets $U_i$
is a collection of smooth maps
\begin{equation*}
\alpha_i\maps U_i \to \Gamma_0
\quand
g_{ij}\maps U_i \cap U_j \to \Gamma_1
\end{equation*}
subject to a cocycle condition.    The categorical extension (or geometric model) of the set $\h^0(M,\Gamma)$ is the groupoid $\bun\Gamma M$ of \emph{principal $\Gamma$-bundles} over $M$. 
Such bundles have a total space $p\maps P  \to M$, an anchor map $\alpha\maps P  \to \Gamma_0$, and carry a principal \quot{action of $\Gamma$ along $\alpha$}. 
The relation between the set $\h^0(M,\Gamma)$ and the groupoid $\bun\Gamma M$ is that the first is the set of isomorphism classes of the second; see \cite[Section 5.7]{moerdijk} or 
\cite[Section 2.2]{NW11}.

If $G$ is a Lie group, there is a  Lie groupoid $\mathcal{B}G$ with a single object and the group $G$ as its morphisms. A principal $\mathcal{B}G$-bundle is the same as an ordinary principal $G$-bundle. Correspondingly, $\h^0(M,\mathcal{B}G)$ is the classical \v Cech cohomology $\h_{\class}^1(M,G)$.

In this paper we want to look at \emph{degree one}  non-abelian cohomology. This requires additional structure on the Lie groupoid $\Gamma$: a Lie 2-group structure -- a certain kind of monoidal structure. 
Degree one non-abelian cohomology with values in a Lie 2-group $\Gamma$ is a set denoted by $\h^1(M,\Gamma)$, and a cocycle is a collection of smooth maps
\begin{equation*}
g_{ij}\maps U_i \cap U_j \to \Gamma_0
\quand
f_{ijk}\maps U_i \cap U_{j} \cap U_k \to \Gamma_1
\end{equation*} 
subject to higher cocycle conditions. The bicategorical extension (resp. geometric model) of the set  $\h^1(M,\Gamma)$ is the bigroupoid
$\zwoabun\Gamma M$ of \emph{principal $\Gamma$-2-bundles} over $M$. The total space of such a 2-bundle is a Lie groupoid $\mathcal{P}$ that carries a principal action of the Lie 2-group $\Gamma$. 
The set of isomorphism classes of the bigroupoid $\zwoabun\Gamma M$ is in bijection with $\h^1(M,\Gamma)$. A detailed account of principal 2-bundles is given in  \cite[Section 6]{NW11}.
In Section \ref{sec:gerbesand2bundles} of the present paper we review some important definitions and results. 

A Lie 2-group $\Gamma$ has two interesting invariants denoted by $\upi_0\Gamma$ and $\upi_1\Gamma$. The first is the set of isomorphism classes of objects of $\Gamma$, and the second is the automorphism group 
of the tensor unit $1 \in \Gamma_0$. For certain Lie 2-groups, $\upi_0\Gamma$ and $\upi_1\Gamma$ are again Lie groups (see Definition \ref{def:ss}). 
For an ordinary topological group $H$ there is a well known long exact sequence relating cohomology (resp. homotopy classes of maps) with values in $H$, $\pi_0(H)$ and $\pi_1(H)$. For a Lie 2-group there is  an  analogue due to Breen \cite{breen3},
which combines the non-abelian cohomology groups $\h^n(M,\Gamma)$ together with the classical \v Cech cohomology groups in a long  exact sequence. With the abbreviation $G \df \upi_0\Gamma$ and $A \df \upi_1\Gamma$, this sequence is:
\begin{multline*}
\hspace{-0.2cm}\alxydim{@C=0.7cm}{0  \ar[r]^{\rule[0mm]{0mm}{2mm}} & \h^1_{\class}(M,A) \ar[r] & \h^0(M,\Gamma) \ar[r] & \h^0_{\class}(M,G) \ar[r] & \h_{\class}^2(M,A)}
\br\hspace{-0.25cm}
\alxydim{@C=0.7cm}{\ar[r]^{\rule[0mm]{0mm}{2mm}} &\h^1(M,\Gamma) \ar[r]^-{\pi_{*}} & \h^1_{\class}(M,G) \ar[r]^-{\delta} & \h^3_{\class}(M,A)\text{.}}
\end{multline*}
In Section \ref{sec:long} we explain this sequence in more detail; in particular we explain how it can be regarded as being induced by a short exact sequence of Lie 2-groups. 

% 
% Above we have collected (bi-)categorical extensions for all of the involved \emph{sets} in this sequence. The first objective of this paper is to establish accompanying categorical extensions of all occurring \emph{maps}, and of the \emph{exactness} of the sequence.
% This is done in a sequence of Theorems \ref{th:fib2gb}, \ref{th:liftbun}, \ref{th:redfunc}, \ref{reduction}, and \ref{thm:lift} carried out in Sections \ref{sec:lift1bun} and \ref{sec:liftings}, successively dealing  with the exactness at all positions. In order to elucidate how such a categorical extension looks like, we shall concentrate -- for the purpose of this introduction -- to the exactness at the last but one position, i.e., at $\h^1_{\class}(M,G)$. 

Exactness at $\h^1_{\class}(M,G)$ means that for a given principal $G$-bundle $E$ over $M$ the class $\delta([E])$ vanishes if and only if there exists a principal $\Gamma$-2-bundle $\mathcal{P}$ over $M$ such that $\pi_{*}([\mathcal{P}])\eq [E]$. Such 2-bundles are called \emph{$\Gamma$-lifts of $E$}, and form a bigroupoid $\lift_\Gamma(E)$. Essential for the categorical extension of the exactness is a geometrical understanding of the obstruction class $\delta([E]) \in \h^3_{\class}(M,A)$. For this purpose we construct an $A$-bundle 2-gerbe $\mathbb{L}_E$ with characteristic class $[\mathbb{L}_E]\eq \delta([E])$ (Definition \ref{def:lifting2gerbe}).  Our construction of $\mathbb{L}_E$ generalizes the  Chern-Simons bundle 2-gerbe of Carey et al. \cite{carey4} using a new relation between Lie 2-groups and multiplicative gerbes that we discover. The categorical extension of the exactness is now given by the following theorem.

\begin{maintheorem}{A}
\label{th:A}
Let $\Gamma$ be a smoothly separable Lie 2-group, let $E$ be a principal  $G$-bundle over $M$, and let $\mathbb{L}_E$ be the associated lifting bundle 2-gerbe.  Then, there is an equivalence of bigroupoids
\begin{equation*}
\bigset{3.8cm}{Trivializations of the lifting bundle 2-gerbe $\mathbb{L}_E$} \cong 
\lift_{\Gamma}(E)\text{.}
\end{equation*}
\end{maintheorem}

Theorem \ref{th:A} reproduces the set-theoretical exactness statement in the sense that equivalent bigroupoids are either both empty or both non-empty. On top of that, Theorem \ref{th:A} specifies \emph{how} the various ways of trivializing the obstruction are related to the various possible lifts. Theorem \ref{th:A} is stated and proved in the main text as Theorem \ref{thm:lift}. The proof uses the descent theory for bundle gerbes developed in \cite{nikolaus2}, as well as a  reduction theorem (Theorem \ref{reduction}) that establishes a categorical extension of the exactness of the sequence one position to the left.

The main objective of this paper is to define and study non-abelian transgression. For this purpose, the improvement we have achieved with  Theorem \ref{th:A}  will be  \emph{essential}.
 We recall that there is a transgression homomorphism
\begin{equation}
\label{eq:classtrans}
\h^n_{\class}(M,A) \to \h^{n-1}_{\class}(LM,A) 
\end{equation}
in classical \v Cech cohomology, where $LM\eq C^{\infty}(S^1,M)$ is the free loop space of $M$. It is defined in the first place for a differential extension, e.g., Deligne cohomology. There, transgression is a chain map between the Deligne cochain complexes \cite{brylinski1,gomi2};  this chain map induces a well-defined map in the ordinary (non-differential) cohomology.

In a paper \cite{schreiber2} of  Urs Schreiber and KW, a generalization of transgression to non-abelian cohomology using connections on non-abelian gerbes and their formulation by  parallel transport 2-functors is discussed. Unfortunately, the method developed there works only for the based loop space $\Omega M \subset LM$.
Yet, it showed already an important difference between the abelian transgression \erf{eq:classtrans} and its non-abelian generalization: in the non-abelian case, the structure group  changes.

In the formalism of principal 2-bundles, and with the help of the lifting theory of Theorem \ref{th:A}, we are able to resolve the problems encountered in \cite{schreiber2}.  
In Section \ref{sec:transgression} of the present paper we define, for every smoothly separable, strict Lie 2-group $\Gamma$ with $\upi_0\Gamma$ compact, a Fréchet Lie group $L\Gamma$ which we call the \emph{loop group} of $\Gamma$ (Definition \ref{def:loopgroup}). 
This construction uses multiplicative gerbes, connections, and a version of Brylinski's original transgression functor for abelian gerbes \cite{brylinski1}. It is functorial in $\Gamma$, so that a (weak) homomorphism $\Lambda\maps \Gamma \to \Omega$  between Lie 2-groups induces a group homomorphism $L\Lambda\maps L\Gamma \to L\Omega$.

\begin{maintheorem}{B}
\label{th:B}
Let $\Gamma$ be a smoothly separable Lie 2-group with $\upi_0\Gamma$ compact and connected. Then, there is a well-defined transgression map
\begin{equation*}
\h^1(M,\Gamma) \to \h^1_{\class}(LM,L\Gamma)
\end{equation*}
for non-abelian cohomology, which is contravariant in $M$ and covariant in $\Gamma$. Moreover, for $\Gamma\eq\mathcal{B}A$ we have $L\Gamma\eq A$, and the classical transgression map \erf{eq:classtrans} for $n=2$ is reproduced.
\end{maintheorem}

The construction of the transgression map proceeds as follows. First, we represent a non-abelian cohomology class by a principal $\Gamma$-2-bundle $\mathcal{P}$ over $M$. We make the tautological observation that $\mathcal{P}$ is a $\Gamma$-lift of the principal $G$-bundle $E \df \pi_{*}(\mathcal{P})$. By Theorem \ref{th:A}, it thus corresponds to a trivialization of the associated lifting bundle 2-gerbe $\mathbb{L}_E$, which is an \emph{abelian} $A$-bundle 2-gerbe. Second, we use the existing functorial transgression for \emph{abelian} bundle 2-gerbes, resulting in an $A$-bundle gerbe  over $LM$ together with a trivialization. Third, we re-assemble these into a principal $L\Gamma$-bundle over $LM$ using the theory of (ordinary) lifting bundle gerbes \cite{murray}.
 We explain these steps in detail in Section \ref{sec:transgression}, where Theorem \ref{th:B} is stated as Theorem \ref{th:trans}.
The main difficulties we encounter there are to eliminate the choices of connections needed to make abelian transgression functorial.

In Section \ref{sec:string} we present an application of Theorems \ref{th:A} and \ref{th:B} to string structures on a spin manifold $M$. The Lie 2-group which is relevant  here is some strict Lie 2-group model for the string group. This is a (necessarily infinite-dimensional) strict Lie 2-group $\str n$ with   $\upi_0\str n \eq \spin n$ and $\upi_{1}\str n \eq\ueins$, such that its geometric realization is a three-connected  extension  
\begin{equation*}
1 \to B\ueins \to |\str n| \to \spin n \to 1
\end{equation*}
of topological groups. Strict Lie 2-group models have been constructed in \cite{baez9,nikolausb,waldorf14}.
In the language of Theorem \ref{th:A}, we say that a \emph{string structure} on a principal $\spin n$-bundle $E$ is a $\str n$-lift $\mathcal{P}$ of $E$ (Definition \ref{def:ss1}).

We prove that the lifting bundle 2-gerbe $\mathbb{L}_E$ that represents the obstruction against $\str n$-lifts of $E$ coincides with the Chern-Simons 2-gerbe $\mathbb{CS}_E(\mathcal{G})$ associated to the  level one multiplicative bundle gerbe over $G$ (Lemma \ref{lem:levelone}). This bundle 2-gerbe provided another notion of string structures suitable in the context of \emph{string connections} \cite{waldorf8}. Previously, it was known that these two notions of string structures coincide on a level of equivalence classes. Theorem \ref{th:A} promotes this bijection to an  equivalence of  bigroupoids  (Theorem \ref{th:coinc}). It so enables to switch in a \emph{functorial way} between the two notions.   

In order to apply the transgression map of Theorem \ref{th:B} we show that the loop group of $\str n$ is the universal central extension of the loop group  $L\spin n$ (Lemma \ref{lem:loopstring}). Lifts of the structure group of the looped frame bundle of $M$ from $L\spin n$ to this universal extension are usually called \emph{spin structures} on the loop space $LM$ \cite{mclaughlin1}. Previously, it was known that $LM$ is spin if  $M$ is string \cite{mclaughlin1}. Theorem \ref{th:B} now permits to transgress a \emph{specific}  string structure on $M$ to a \emph{specific} spin structure on $LM$ (Theorem \ref{th:stringtrans}).  

\paragraph{Acknowledgements.} KW thanks the Hausdorff Research Institute for Mathematics in Bonn for  kind hospitality and financial support.

\leer{

%%%%%%%%%%%%%%%%%%%%%%%%%%%%%%%%%%%%%%%%%%%%%%%%%%%%%%%%%%%%%%%%%%%%%%%%%%%%%%%%%%%%%%%%%%%%%%%%%%%%%%%%%%%%%%%%%%%%%%%%%%%%%%
\section{Introduction}
%%%%%%%%%%%%%%%%%%%%%%%%%%%%%%%%%%%%%%%%%%%%%%%%%%%%%%%%%%%%%%%%%%%%%%%%%%%%%%%%%%%%%%%%%%%%%%%%%%%%%%%%%%%%%%%%%%%%%%%%%%%%%%

\label{sec:intro}

In the present paper we study categorical and bicategorical extensions of the non-abelian cohomology of a smooth manifold $M$. In degree zero, non-abelian cohomology with values in a Lie groupoid $\Gamma$ is a set denoted $\h^0(M,\Gamma)$, in which a cocycle with respect to a cover of $M$ by open sets $U_i$
is a collection of smooth maps
\begin{equation*}
\alpha_i\maps U_i \to \Gamma_0
\quand
g_{ij}\maps U_i \cap U_j \to \Gamma_1
\end{equation*}
such that $g_{ij}(x)$ is a morphism from $\alpha_i(x)$ to $\alpha_j(x)$, and $g_{ik}(x) \eq g_{jk}(x) \circ g_{ij}(x)$. Here we have denoted by $\Gamma_0$ the objects of $\Gamma$, by $\Gamma_1$ the morphisms, and by $\circ$ the composition. The categorical extension of the set $\h^0(M,\Gamma)$ we want to study is the groupoid $\bun\Gamma M$ of \emph{principal $\Gamma$-bundles} over $M$. Such bundles have a total space $p\maps P  \to M$, an anchor map $\alpha\maps P  \to \Gamma_0$, and carry a principal \quot{action of $\Gamma$ along $\alpha$}. The relation between the set $\h^0(M,\Gamma)$ and the groupoid $\bun\Gamma M$ is that the first is the set of isomorphism classes of the second; under this relation, the functions $\alpha_i$ are  local versions of the anchor map $\alpha$, and the functions $g_{ij}$ are the transition functions. A review of definitions and results about principal $\Gamma$-bundles can be found in 
\cite[Section 5.7]{moerdijk} or 
\cite[Section 2.2]{NW11}.

If $G$ is a Lie group, there is a a Lie groupoid $\mathcal{B}G$ with a single object and the group $G$ as its morphisms. A principal $\mathcal{B}G$-bundle is the same as an ordinary principal $G$-bundle. Correspondingly, $\h^0(M,\mathcal{B}G)$ is the classical \v Cech cohomology $\h_{\class}^1(M,G)$.

Going to higher degrees in non-abelian cohomology is never for free. For example, the classical \v Cech cohomology $\h^{n+1}_{\class}(M,G)$ can only be defined in degrees $n>0$ if  $G$ is abelian. Then, for an abelian Lie group $A$, the group $\h^{n+1}_{\class}(M,A)$ has a nice $(n+1)$-categorical extension by \emph{$A$-bundle $n$-gerbes}. For $n\eq1$, these are the ordinary $A$-bundle gerbes introduced by Murray (for $A\eq\C^{\times}$) \cite{murray}.

In this paper we want to go to \emph{degree one} in non-abelian cohomology. This requires additional structure on the Lie groupoid $\Gamma$: a strict  Lie 2-group structure --   a certain kind of monoidal structure. Degree one non-abelian cohomology with values in a Lie 2-group $\Gamma$ is a set denoted $\h^1(M,\Gamma)$, and a cocycle is a collection of smooth maps
\begin{equation*}
g_{ij}\maps U_i \cap U_j \to \Gamma_0
\quand
f_{ijk}\maps U_i \cap U_{j} \cap U_k \to \Gamma_1
\end{equation*} 
such that $f_{ijk}(x)$ is a morphism from $g_{jk}(x) \otimes g_{ij}(x)$ to $g_{ik}(x)$, and $f_{ijk}$ itself satisfies a higher cocycle condition. The bicategorical extension of the set  $\h^1(M,\Gamma)$ that we want to study is the bigroupoid
$\zwoabun\Gamma M$ of \emph{principal $\Gamma$-2-bundles} over $M$. The total space of such a 2-bundle is a Lie groupoid $\mathcal{P}$ that carries a principal action of the Lie 2-group $\Gamma$. The set of isomorphism classes of the bigroupoid $\zwoabun\Gamma M$ is in bijection with $\h^1(M,\Gamma)$. A detailed account of principal 2-bundles is given in Section \cite[Section 6]{NW11}. In Section \ref{sec:gerbesand2bundles} of the present paper we review some important definitions and results.

A Lie 2-group $\Gamma$ has two interesting invariants denoted by $\upi_0\Gamma$ and $\upi_1\Gamma$. The first is the set of isomorphism classes of objects of $\Gamma$, and forms a group under the monoidal structure. The second is the automorphism group of the tensor unit $1 \in \Gamma_0$; it is abelian since the monoidal structure equips it with a second group structure which is a homomorphism for the first -- such  group structures coincide and are abelian via the Eckmann-Hilton argument. For a nice subclass of Lie 2-groups, called \quot{smoothly separable}, the groups $\upi_0\Gamma$ and $\upi_1\Gamma$ are again Lie groups with nice properties (see Definition \ref{def:ss}).

A classical result of Breen \cite{breen3} combines the non-abelian cohomology groups $\h^n(M,\Gamma)$ together with the classical \v Cech cohomology groups in a long  exact sequence. With the abbreviation $G \df \upi_0\Gamma$ and $A \df \upi_1\Gamma$, this sequence is:
\begin{multline*}
\hspace{-0.2cm}\alxydim{@C=0.7cm}{0  \ar[r]^{\rule[0mm]{0mm}{2mm}} & \h^1_{\class}(M,A) \ar[r] & \h^0(M,\Gamma) \ar[r] & \h^0_{\class}(M,G) \ar[r] & \h_{\class}^2(M,A)}
\br\hspace{-0.25cm}
\alxydim{@C=0.7cm}{\ar[r]^{\rule[0mm]{0mm}{2mm}} &\h^1(M,\Gamma) \ar[r]^-{\pi_{*}} & \h^1_{\class}(M,G) \ar[r]^-{\delta} & \h^3_{\class}(M,A)\text{.}}
\end{multline*}
In Section \ref{sec:long} we explain this sequence in more detail; in particular we explain how it can be regarded as being induced by a short exact sequence of Lie 2-groups. 

% 
% Above we have collected (bi-)categorical extensions for all of the involved \emph{sets} in this sequence. The first objective of this paper is to establish accompanying categorical extensions of all occurring \emph{maps}, and of the \emph{exactness} of the sequence.
% This is done in a sequence of Theorems \ref{th:fib2gb}, \ref{th:liftbun}, \ref{th:redfunc}, \ref{reduction}, and \ref{thm:lift} carried out in Sections \ref{sec:lift1bun} and \ref{sec:liftings}, successively dealing  with the exactness at all positions. In order to elucidate how such a categorical extension looks like, we shall concentrate -- for the purpose of this introduction -- to the exactness at the last but one position, i.e., at $\h^1_{\class}(M,G)$. 

Exactness at $\h^1_{\class}(M,G)$ means that for a given principal $G$-bundle $E$ over $M$ the class $\delta([E])$ vanishes if and only if there exists a principal $\Gamma$-2-bundle $\mathcal{P}$ over $M$ such that $\pi_{*}([\mathcal{P}])\eq [E]$. Such 2-bundles are called \emph{$\Gamma$-lifts of $E$}, and form a bigroupoid $\lift_\Gamma(E)$. Essential for the categorical extension of the exactness is a geometrical understanding of the obstruction class $\delta([E]) \in \h^3_{\class}(M,A)$. For this purpose we construct an $A$-bundle 2-gerbe $\mathbb{L}_E$ with characteristic class $[\mathbb{L}_E]\eq \delta([E])$ (Definition \ref{def:lifting2gerbe}).  Our construction of $\mathbb{L}_E$ generalizes the one of the Chern-Simons bundle 2-gerbe of Carey et al. \cite{carey4} using a new relation between Lie 2-groups and multiplicative gerbes that we discover. The categorical extension of the exactness is now:

\begin{maintheorem}{A}
\label{th:A}
Let $\Gamma$ be a smoothly separable Lie 2-group, let $E$ be a principal  $G$-bundle over $M$, and let $\mathbb{L}_E$ be the associated lifting bundle 2-gerbe.  Then, there is an equivalence of bigroupoids
\begin{equation*}
\bigset{3.8cm}{Trivializations of the lifting bundle 2-gerbe $\mathbb{L}_E$} \cong 
\lift_{\Gamma}(E)\text{.}
\end{equation*}
\end{maintheorem}

Theorem \ref{th:A} reproduces the set-theoretical exactness statement in the sense that equivalent bigroupoids are either both empty or both non-empty. On top of that, Theorem \ref{th:A} specifies \emph{how} the various ways of trivializing the obstruction are related to the various possible lifts. Theorem \ref{th:A} is stated and proved in the main text as Theorem \ref{thm:lift}. The proof uses the descent theory for bundle gerbes developed in \cite{nikolaus2}, as well as a  reduction theorem (Theorem \ref{reduction}) that establishes a categorical extension of the exactness of the sequence one position earlier.

The second objective of this paper is to demonstrate that the promotion of the set-theoretical exactness to an equivalence of bigroupoids, provided by  Theorem \ref{th:A}, is an \emph{essential} improvement. We recall that there is a transgression homomorphism
\begin{equation}
\label{eq:classtrans}
\h^n_{\class}(M,A) \to \h^{n-1}_{\class}(LM,A) 
\end{equation}
in classical \v Cech cohomology, where $LM\eq C^{\infty}(S^1,M)$ is the free loop space of $M$. It is defined in the first place for a differential extension, e.g., Deligne cohomology. There, transgression is a chain map between the Deligne cochain complexes \cite{brylinski1,gomi2};  this chain map induces a well-defined map in the ordinary (non-differential) cohomology.

In a paper \cite{schreiber2} of  Urs Schreiber and KW, a generalization of transgression to non-abelian cohomology using connections on non-abelian gerbes and their formulation by  parallel transport 2-functors is discussed. Unfortunately, the method developed there works only for the based loop space $\Omega M \subset LM$.
Yet, it showed already an important difference between the abelian transgression \erf{eq:classtrans} and its non-abelian generalization: in the non-abelian case, the structure group will change.

In the formalism of principal 2-bundles, and with help of the lifting theory of Theorem \ref{th:A}, we are able to resolve the problems encountered in \cite{schreiber2}.  
In Section \ref{sec:transgression} of the present paper we define, for every smoothly separable, strict Lie 2-group $\Gamma$ with $\upi_0\Gamma$ compact, a Fréchet Lie group $L\Gamma$ which we call the \emph{loop group} of $\Gamma$ (Definition \ref{def:loopgroup}). 
This construction uses multiplicative gerbes, connections, and a version of Brylinski's original transgression functor \cite{brylinski1}. It is functorial in $\Gamma$, so that a (weak) homomorphism $\Lambda\maps \Gamma \to \Omega$  between Lie 2-groups induces a group homomorphism $L\Lambda\maps L\Gamma \to L\Omega$.

\begin{maintheorem}{B}
\label{th:B}
Let $\Gamma$ be a smoothly separable Lie 2-group with $\upi_0\Gamma$ compact and connected. Then, there is a well-defined transgression map
\begin{equation*}
\h^1(M,\Gamma) \to \h^1_{\class}(LM,L\Gamma)
\end{equation*}
for non-abelian cohomology, which is contravariant in $M$ and covariant in $\Gamma$. Moreover, for $\Gamma\eq\mathcal{B}A$ we have $L\Gamma\eq A$, and the classical transgression map \erf{eq:classtrans} is reproduced.
\end{maintheorem}

The construction of the transgression map proceeds as follows. Firstly, we represent a non-abelian cohomology class by a principal $\Gamma$-2-bundle $\mathcal{P}$ over $M$. We make the tautological observation that $\mathcal{P}$ is a $\Gamma$-lift of the principal $G$-bundle $E \df \pi_{*}(\mathcal{P})$. By Theorem \ref{th:A}, it thus corresponds to a trivialization of the associated lifting bundle 2-gerbe $\mathbb{L}_E$, which is an \emph{abelian} $A$-bundle 2-gerbe. Secondly, we use the existing functorial transgression for \emph{abelian} bundle 2-gerbes, resulting in an $A$-bundle gerbe  over $LM$ together with a trivialization. Thirdly, we re-assemble these into a principal $L\Gamma$-bundle over $LM$ using the theory of (ordinary) lifting bundle gerbes \cite{murray}.
 We explain these steps in detail in Section \ref{sec:transgression}, where Theorem \ref{th:B} is stated as Theorem \ref{th:trans}.
The main difficulties we encounter there are to eliminate the choices of connections needed to make abelian transgression functorial.

In Section \ref{sec:string} we present an application of Theorems \ref{th:A} and \ref{th:B} to string structures on a spin manifold $M$. The Lie 2-group which is relevant  here is some strict Lie 2-group model for the string group. This is a (possibly infinite-dimensional) strict Lie 2-group $\str n$ with   $\upi_0\str n \eq \spin n$ and $\upi_{1}\str n \eq\ueins$, such that its geometric realization is a three-connected  extension  
\begin{equation*}
1 \to B\ueins \to |\str n| \to \spin n \to 1
\end{equation*}
of topological groups. Strict Lie 2-group models have been constructed in \cite{baez9,nikolausb,waldorf14}.
In the language of Theorem \ref{th:A}, we say that a \emph{string structure} on a principal $\spin n$-bundle $E$ is a $\str n$-lift $\mathcal{P}$ of $E$ (Definition \ref{def:ss1}).

We prove that the lifting bundle 2-gerbe $\mathbb{L}_E$ that represents the obstruction against $\str n$-lifts of $E$ coincides with the Chern-Simons 2-gerbe $\mathbb{CS}_E(\mathcal{G})$ associated to the so-called level one multiplicative bundle gerbe over $G$ (Lemma \ref{lem:levelone}). This bundle 2-gerbe provided another notion of string structures suitable in the context of \emph{string connections}. Previously, it was known that these two notions of string structures coincide on a level of equivalence classes. Theorem \ref{th:A} promotes this bijection to an  equivalence of  bigroupoids  (Theorem \ref{th:coinc}). It so enables to switch in a \emph{functorial way} between the two notions.   

In order to apply the transgression map of Theorem \ref{th:B} we show that the loop group of $\str n$ is the universal central extension of the loop group  $L\spin n$ (Lemma \ref{lem:loopstring}). Lifts of the structure group of the looped frame bundle of $M$ from $L\spin n$ to this universal extension are usually called \emph{spin structures} on the loop space $LM$ \cite{mclaughlin1}. Previously, it was known that $LM$ is spin if  $M$ is string \cite{mclaughlin1}. Theorem \ref{th:B} now permits to transgress a \emph{specific}  string structure on $M$ to a \emph{specific} spin structure on $LM$ (Theorem \ref{th:stringtrans}).  

\paragraph{Acknowledgements.} KW thanks the Hausdorff Research Institute for Mathematics in Bonn for  kind hospitality and financial support. 

}

%%%%%%%%%%%%%%%%%%%%%%%%%%%%%%%%%%%%%%%%%%%%%%%%%%%%%%%%%%%%%%%%%%%%%%%%%%%%%%%%%%%%%%%%%%%%%%%%%%%%%%%%%%%%%%%%%%%%%%%%%%%%%%
\section{Non-Abelian Gerbes and 2-Bundles}
%%%%%%%%%%%%%%%%%%%%%%%%%%%%%%%%%%%%%%%%%%%%%%%%%%%%%%%%%%%%%%%%%%%%%%%%%%%%%%%%%%%%%%%%%%%%%%%%%%%%%%%%%%%%%%%%%%%%%%%%%%%%%%

\label{sec:gerbesand2bundles}

In this section we recall the notions of 2-groups,  2-bundles and non-abelian gerbes, and the relationship between the latter two, following our paper \cite{NW11}. 
We remark that these objects have also been treated in many other papers, e.g., \cite{aschieri, breen3, bartels, wockel1}.
There is one difference between \cite{NW11} and the present paper: here we work with  not necessarily finite-dimensional manifolds. More precisely, the manifolds and  Lie groups in this paper are modelled on  locally convex vector spaces.

\begin{definition}
\label{def:lie2group}
A \emph{Lie groupoid} is a  small groupoid $\Gamma$ whose set of objects $\Gamma_0$ and whose set of morphisms $\Gamma_1$ are  manifolds, and whose structure maps
\begin{equation*}
s,t\maps \Gamma_1 \to \Gamma_0 \quad i\maps \Gamma_0 \to \Gamma_1 
\quad \text{and} \quad \circ\maps \Gamma_1 \times_{\Gamma_0} \Gamma_1 \to \Gamma_1 
\end{equation*} 
are smooth, and $s,t$ are submersions. A  \emph{Lie 2-group} is a Lie groupoid $\Gamma$ such that $\Gamma_0$ and $\Gamma_1$ are Lie groups, and such that all structure maps are Lie group homomorphisms.
\end{definition}

We remark that the Lie 2-groups of Definition \ref{def:lie2group} are usually called \emph{strict}. We suppress this adjective since all Lie 2-groups in this article are strict. 
In principle, all theorems and notions discussed here have a counterpart for non-strict 2-groups (even \quot{stacky} 2-groups).

Lie groupoids can be seen as a generalization of Lie groups. Indeed, every Lie group $G$ gives rise to a Lie groupoid $\mathcal{B}G$ with one object, whose manifold  of automorphisms is  $G$. A Lie groupoid of the form $\mathcal{B}G$ is a Lie 2-group if and only if $G$ is abelian.

Let $\Gamma$ be a Lie groupoid, not necessarily a Lie 2-group. Crucial for the present paper is the notion of a \emph{principal $\Gamma$-bundle} over a manifold $M$; see e.g., \cite[Section 5.7]{moerdijk} or \cite[Definition 2.2.1]{NW11}. It generalizes the notion of  a principal bundle for a Lie group. Principal $\Gamma$-bundles over $M$ form  a groupoid $\bun {\Gamma} M$ with many additional features. For example,   one can pullback principal $\Gamma$-bundles along smooth maps $f\maps N \to M$. 
Further, if $\Lambda\maps\Gamma \to \Omega$ is a \emph{smooth anafunctor} between Lie groupoids  \cite[Definition 2.3.1]{NW11}, there is an induced functor 
\begin{equation*}
\Lambda_{*}\maps \bun\Gamma M \to \bun\Omega M
\end{equation*}
which we call \emph{extension} along $\Lambda$ \cite[Definition 2.3.7]{NW11}. This induced functor is an equivalence of groupoids if $\Lambda$ is a \emph{weak equivalence} \cite[Definition 2.3.10 \& Corollary 2.3.11]{NW11}.

If $\Gamma$ is a Lie 2-group, the groupoid of principal $\Gamma$-bundles over $M$ is monoidal by means of a tensor product
\begin{equation*}
\otimes\maps  \bun \Gamma M \times \bun \Gamma M \to \bun \Gamma M.
\end{equation*}
Using this tensor product one can define \emph{non-abelian bundle gerbes} completely analogous to \emph{abelian} bundle gerbes.
\begin{definition}[{{\cite[Definition 5.1.1]{NW11}}}] \label{def:grb}
Let $M$ be a  manifold, and let $\Gamma$ be a Lie 2-group. A \emph{$\Gamma$-bundle gerbe over $M$} is a surjective submersion $\pi\maps Y \to M$, a principal $\Gamma$-bundle $P$ over $Y^{[2]}$ and an associative morphism
\begin{equation*}
\mu\maps \pi_{23}^{*}P \otimes \pi_{12}^{*}P \to \pi_{13}^{*}P
\end{equation*}
of $\Gamma$-bundles over $Y^{[3]}$, where $\pi_{ij}\maps Y^{[3]} \to Y^{[2]}$ denotes the projection to the $i$-th and $j$-th factor.
\end{definition}

Another useful concept of a non-abelian gerbe is a principal 2-bundle. The idea of a principal 2-bundle is to mimic the definition of an ordinary principal bundle, with the structure group replaced by a Lie 2-group $\Gamma$ and the total space replaced by  a Lie groupoid. We will use the notation $\idmorph{M}$ when a manifold $M$ is regarded as a Lie groupoid with only identity morphisms.

\begin{definition}[{{\cite[Definition 6.1.5]{NW11}}}]
A \emph{principal $\Gamma$-2-bundle over $M$} is a Lie groupoid $\mathcal{P}$, a smooth functor $\pi \maps \mathcal{P} \to \idmorph M$ that is a surjective submersion on the level of objects, and a smooth, strict, right action 
\begin{equation*}
R \maps \mathcal{P} \times \Gamma \to \mathcal{P}
\end{equation*}
of $\Gamma$ on $\mathcal{P}$ that is principal in the sense that the functor $(\mathrm{pr}_1, R) \maps \mathcal{P} \times \Gamma \to \mathcal{P} \times_M \mathcal{P} $ is a weak equivalence.
\end{definition}

$\Gamma$-bundle gerbes as well as principal $\Gamma$-2-bundles can be arranged into bigroupoids which we denote by $\grb \Gamma M$ and $\zwoabun \Gamma M$, respectively \cite[Sections 5.1 and 6.1]{NW11}. Moreover, smooth maps $f\maps N \to M$ induce pullback 2-functors for both bigroupoids.

\begin{theorem}[{{\cite[Theorem 7.1]{NW11}}}]
\label{thm:equiv}
The bigroupoids $\grb\Gamma M$ and $\zwoabun \Gamma M$ are equivalent. Furthermore, this equivalence is compatible with pullbacks. More precisely $\grb\Gamma -$ and $\zwoabun \Gamma -$ form equivalent 2-stacks over the site of smooth manifolds. \end{theorem}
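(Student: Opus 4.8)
Proof proposal for Theorem \ref{thm:equiv} (the statement that $\grb\Gamma M \simeq \zwoabun\Gamma M$).

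The plan is to construct functors in both directions and show they are mutually inverse equivalences of bigroupoids, compatibly with pullback. The central geometric idea is that a $\Gamma$-bundle gerbe over $M$ packages exactly the descent data for a principal $\Gamma$-2-bundle, so the two objects are two presentations of the same descent problem over a cover.

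First I would build the functor $\zwoabun\Gamma M \to \grb\Gamma M$. Given a principal $\Gamma$-2-bundle $\pi\maps\mathcal{P}\to\idmorph M$, the functor $\pi$ is a surjective submersion on objects, so set $Y\df\mathcal P_0$ with the induced surjective submersion $Y\to M$. Over $Y^{[2]}=\mathcal P_0\times_M\mathcal P_0$ one extracts a principal $\Gamma$-bundle $P$ from the principality datum: the weak equivalence $(\mathrm{pr}_1,R)\maps\mathcal P\times\Gamma\to\mathcal P\times_M\mathcal P$ lets one identify, for each pair of points in the same fibre, the ``difference'' as a $\Gamma$-torsor, and the morphisms of $\mathcal P$ assemble these fibrewise into a genuine principal $\Gamma$-bundle over $Y^{[2]}$. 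The groupoid composition in $\mathcal P$ over triple intersections then produces the associative bundle gerbe multiplication $\mu\maps\pi_{23}^{*}P\otimes\pi_{12}^{*}P\to\pi_{13}^{*}P$. One checks that 1-morphisms and 2-morphisms of 2-bundles are carried to the corresponding morphisms and transformations of bundle gerbes, giving a 2-functor.

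Second, for the reverse functor $\grb\Gamma M\to\zwoabun\Gamma M$, I would reconstruct the total Lie groupoid $\mathcal P$ from $(Y,P,\mu)$. The objects are $\mathcal P_0\df Y$; the morphisms are built from the total space of $P$ over $Y^{[2]}$, with source and target the two projections $Y^{[2]}\rightrightarrows Y$ and composition supplied by $\mu$ (associativity of $\mu$ is exactly associativity of composition). The $\Gamma$-action on $P$ induces the strict right $\Gamma$-action $R\maps\mathcal P\times\Gamma\to\mathcal P$, and principality of $P$ as a $\Gamma$-bundle translates into the required weak equivalence $(\mathrm{pr}_1,R)\maps\mathcal P\times\Gamma\to\mathcal P\times_M\mathcal P$. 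Again this extends to 1- and 2-morphisms.

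Third, I would exhibit natural pseudonatural equivalences $F\circ G\simeq\id$ and $G\circ F\simeq\id$; the round trips recover the same surjective submersion and the same $\Gamma$-bundle up to canonical isomorphism, so these equivalences are essentially tautological once the dictionary is set up. Finally, for the $2$-stack statement, I would check that both constructions commute with pullback along $f\maps N\to M$ (pullback of $Y$, $P$, $\mu$ on one side; pullback of $\mathcal P$ on the other) up to coherent natural equivalence, and that these pullback-compatibility data satisfy the cocycle coherence making $\grb\Gamma-$ and $\zwoabun\Gamma-$ equivalent as $2$-stacks.

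The main obstacle I anticipate is \emph{bookkeeping at the level of 2-morphisms and coherence}, not the underlying geometric correspondence. Matching isomorphisms of bundle gerbes (which involve an auxiliary refinement/common cover and a torsor of transition data) with weak equivalences of Lie groupoids respecting the $\Gamma$-action requires care, and verifying that the coherence 2-cells (associators for $\mu$ versus the strictness/coherence of the groupoid structure and action) correspond on the nose is the delicate part. Since the bigroupoid structures and the relevant anafunctor/weak-equivalence technology are already developed in \cite{NW11}, I would lean on those results to discharge the coherence checks rather than verifying every pentagon by hand.
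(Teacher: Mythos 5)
First, a point of comparison: the present paper does not actually prove Theorem \ref{thm:equiv} — it is imported verbatim from \cite[Theorem 7.1]{NW11} — so the relevant benchmark is the proof given there. Your first direction (from a principal $\Gamma$-2-bundle $\mathcal{P}$, take $Y\df\mathcal{P}_0$, obtain the principal $\Gamma$-bundle over $Y^{[2]}$ by inverting the weak equivalence $(\mathrm{pr}_1,R)$ as an anafunctor, and get $\mu$ from composition) is essentially the 2-functor $\mathscr{E}$ of \cite[Section 7.1]{NW11} that this paper also invokes, and that part of your outline is sound.

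The genuine gap is in your reverse construction. A principal $\Gamma$-2-bundle requires a smooth \emph{strict} action $R\maps \mathcal{P}\times\Gamma\to\mathcal{P}$, hence in particular a smooth map $\mathcal{P}_0\times\Gamma_0\to\mathcal{P}_0$ on objects; but the surjective submersion $Y$ of a bundle gerbe carries no $\Gamma_0$-action, so setting $\mathcal{P}_0\df Y$ and $\mathcal{P}_1\df P$ cannot work unless $\Gamma_0$ is trivial (it does work for $\Gamma=\mathcal{B}A$, which may be why it looks plausible). The case $\Gamma=\idmorph{G}$ makes the failure concrete: a $\idmorph{G}$-bundle gerbe is exactly a $G$-cocycle $g\maps Y^{[2]}\to G$ (the total space of $P$ is just $Y^{[2]}$), and your recipe returns the \v Cech groupoid of $Y\to M$, which is Morita equivalent to $\idmorph{M}$ and admits no free $G$-action — whereas the correct answer is the glued principal $G$-bundle with total space $(Y\times G)/\!\sim$, not $Y$. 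The repair, which is what \cite{NW11} does, is to take the object manifold to be $Y\times\Gamma_0$ (so $\Gamma_0$ acts by right translation) and to build the morphism manifold out of $P$ and $\Gamma_1$ by an associated-bundle-type construction; alternatively one can avoid an explicit inverse entirely by showing that $\mathscr{E}$ is a morphism of 2-stacks that is an equivalence locally, where both sides trivialize. Your remaining steps (round-trip equivalences, pullback compatibility, delegating coherence to the anafunctor technology of \cite{NW11}) are reasonable once this is fixed.
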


Finally, we note that  a  Lie 2-group homomorphism $\Lambda \maps \Gamma \to \Omega$, see \cite[Eq. 2.4.4]{NW11}, induces 2-functors 
\begin{equation*}
\Lambda_{*}\maps \grb\Gamma M \to \grb\Omega M
\quand
\Lambda_{*}\maps\zwoabun \Gamma M \to \zwoabun \Omega M
\end{equation*}
that we call \emph{extension along $\Lambda$}.
These 2-functors are equivalences if the homomorphism $\Lambda$ is a weak equivalence
\cite[Theorems 5.2.2 and 6.2.2]{NW11}.

%%%%%%%%%%%%%%%%%%%%%%%%%%%%%%%%%%%%%%%%%%%%%%%%%%%%%%%%%%%%%%%%%%%%%%%%%%%%%%%%%%%%%%%%%%%%%%%%%%%%%%%%%%%%%%%%%%%%%%%%%%%%%%
\section{The Long Exact Sequence in Non-abelian Cohomology}
%%%%%%%%%%%%%%%%%%%%%%%%%%%%%%%%%%%%%%%%%%%%%%%%%%%%%%%%%%%%%%%%%%%%%%%%%%%%%%%%%%%%%%%%%%%%%%%%%%%%%%%%%%%%%%%%%%%%%%%%%%%%%%

\label{sec:long}

As mentioned in the Introduction (Section \ref{sec:intro}), the degree zero non-abelian cohomology of a Lie groupoid $\Gamma$  is a  set $\h^0(M,\Gamma)$ that  generalizes the classical \v Cech cohomology. 
If $\Gamma$ is a Lie 2-group, the set $\h^0(M,\Gamma)$ inherits the structure of a (in general non-abelian) group. Further, the Lie 2-group structure permits to define \emph{degree one} non-abelian cohomology,  $\h^1(M,\Gamma)$, as outlined in Section \ref{sec:intro}. Non-abelian cohomology is important for this article because it classifies the geometrical objects of Section \ref{sec:gerbesand2bundles}. 
\begin{theorem}[{{\cite[Theorems 3.3, 5.3.2, 7.1]{NW11}}}]
\label{th:geo}
Let $\Gamma$ be a Lie groupoid and $M$ a paracompact manifold. Then, there is a  bijection 
\begin{equation*}
\h^0(M,\Gamma) \cong \bigset{2.9cm}{Isomorphism classes of principal $\Gamma$-bundles over $M$}\text{.}
\end{equation*}
Moreover, if $\Gamma$ is  a Lie 2-group, there are  bijections
\begin{equation*}
\h^1(M,\Gamma) \cong \bigset{2.9cm}{Isomorphism classes of $\Gamma$-bundle gerbes over $M$} \\
\cong \bigset{3.1cm}{Isomorphism classes of principal $\Gamma$-2-bundles over $M$} \text{.}
\end{equation*}
\end{theorem}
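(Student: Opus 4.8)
The plan is to treat the statement in three pieces. The third bijection, between isomorphism classes of $\Gamma$-bundle gerbes and of principal $\Gamma$-2-bundles, is immediate: Theorem~\ref{thm:equiv} already provides an equivalence of the bigroupoids $\grb\Gamma M$ and $\zwoabun\Gamma M$, and equivalent bigroupoids have the same set of isomorphism classes. So the real content lies in the two bijections with \v Cech cohomology, namely $\h^0(M,\Gamma)$ with $\bun\Gamma M$ in degree zero and $\h^1(M,\Gamma)$ with $\grb\Gamma M$ in degree one. For both I would use the same mechanism, \emph{descent}, organized so that a cocycle is literally a piece of gluing data and the geometric objects are shown to satisfy the relevant (2-)stack property. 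Since $\h^0$ and $\h^1$ are by definition colimits over open covers of cocycles modulo coboundaries, the decisive points are always local triviality, independence of the cover, and mutual inverseness on classes.

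\textbf{Degree zero.} Given a cocycle $(\alpha_i,g_{ij})$ subordinate to an open cover $\{U_i\}$, I would build a principal $\Gamma$-bundle $P \to M$ by gluing the trivial local pieces determined by the anchors $\alpha_i$ along the transition morphisms $g_{ij}$; the cocycle condition $g_{ik}\eq g_{jk}\circ g_{ij}$ is exactly what makes the gluing consistent, and the result carries a canonical anchor and principal action. Conversely, a principal $\Gamma$-bundle is locally trivial, because $P \to M$ is a surjective submersion and hence admits local sections, and paracompactness lets me pass to a locally finite cover; composing a section with the anchor yields $\alpha_i$, and comparing two sections over an overlap yields $g_{ij}$ via the division map of the principal action. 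The two assignments are mutually inverse once one checks that a refinement of covers together with a coboundary corresponds precisely to an isomorphism of bundles, which identifies $\h^0(M,\Gamma)$ with the isomorphism classes of $\bun\Gamma M$.

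\textbf{Degree one.} I would run the same argument one categorical level up. Given a $\Gamma$-bundle gerbe $(Y,P,\mu)$, I refine the cover so that $\pi\maps Y \to M$ has a section $s_i$ over each $U_i$; the pairs $(s_i,s_j)$ map $U_i\cap U_j$ into $Y^{[2]}$, and trivializing the pulled-back bundle $P$ there yields $g_{ij}\maps U_i\cap U_j \to \Gamma_0$, while the associative morphism $\mu$ pulled back along $(s_i,s_j,s_k)$ over $U_i\cap U_j\cap U_k$ yields the $f_{ijk}$. Associativity of $\mu$ translates into the higher cocycle condition on the $f_{ijk}$. Conversely, a cocycle $(g_{ij},f_{ijk})$ is realized by the bundle gerbe with $Y \df \bigsqcup_i U_i$, with $P$ the trivial bundle twisted by the $g_{ij}$ and $\mu$ induced by the $f_{ijk}$. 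That these are inverse on isomorphism classes rests on the fact that $\Gamma$-bundle gerbes form a 2-stack, which is part of Theorem~\ref{thm:equiv} and of \cite{NW11}: any two surjective submersions are compared through their fibre product, so the class of a gerbe does not depend on the chosen $Y$, and descent identifies it with a cohomology class.

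The main obstacle is the well-definedness and injectivity in degree one, not the mere construction of objects. The delicate part is matching isomorphisms of $\Gamma$-bundle gerbes, which live over a common refinement $Y\times_M Y'$ of the two surjective submersions and carry their own compatibility 2-morphisms, with coboundaries between cocycles $(g_{ij},f_{ijk})$, and conversely. Handling the independence of the chosen $Y$ together with the coherence of these 2-morphisms is the technical heart of the statement, and it is exactly what the 2-stack property encodes; paracompactness enters throughout only to guarantee enough local sections so that everything reduces to an ordinary open cover.
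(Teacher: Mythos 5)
This statement is not proved in the paper at all: it is imported verbatim from \cite[Theorems 3.3, 5.3.2 and 7.1]{NW11}, so there is no in-paper argument to compare against. Your sketch is the standard local-trivialization/descent argument that the cited reference carries out, and it is correct in outline: the third bijection does follow immediately from Theorem \ref{thm:equiv} (which is exactly Theorem 7.1 of \cite{NW11}), and the degree-zero and degree-one identifications are obtained by extracting cocycles from local sections and, conversely, gluing trivial pieces along transition data. The only point I would flag is that in degree one you need slightly more than local sections of $Y\to M$: you must also refine the cover until the principal $\Gamma$-bundles $(s_i,s_j)^{*}P$ over the double intersections are trivializable, which rests on the separate (easy but not free) lemma that principal $\Gamma$-bundles are locally isomorphic to $\trivlin_f$ — this uses both the submersion property of $P\to M$ and the principality of the action, and is where paracompactness genuinely enters twice. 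You gesture at this, and your identification of the ``technical heart'' with the 2-stack property governing independence of the chosen surjective submersion is exactly right.
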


For $\Gamma$ a Lie 2-group, 
we denote by $\upi_{0}\Gamma$ the
 group of  isomorphism classes of objects in $\Gamma$ and by
 $\upi_{1}\Gamma$ the group of automorphisms of $1\in \Gamma_{0}$. We remark that $\upi_{1}\Gamma$ is abelian due to its two commuting group structures (multiplication and composition). We also remark that the sequence
\begin{equation}
\label{4term}
\alxydim{}{1 \ar[r] & \upi_1\Gamma \ar[r]^-{i} & \mathrm{ker}(s) \ar[r]^-{t} & \Gamma_0 \ar[r]^-{\pi} & \upi_0\Gamma \ar[r] & 0}
\end{equation}
is exact as a sequence of groups. 
\begin{definition}
\label{def:ss}
 A Lie 2-group $\Gamma$ is called \underline{smoothly
 separable} if $\upi_{1}\Gamma$ is a split Lie subgroup
 of $\Gamma_{1}$ and the group $\upi_{0}\Gamma$ is a Lie group such that the projection
 $\Gamma_{0}\to \upi_{0}\Gamma$ is a submersion.
\end{definition}

For a smoothly separable Lie 2-group $\Gamma$, the sequence \erf{4term} is exact as a sequence of Lie groups: the inclusion $i$ is an embedding, and the map $t$ is a submersion onto its image. Further, we have the following sequence of Lie 2-groups and smooth functors:
\begin{equation}\label{sequence}
\mathcal{B} \upi_1\Gamma \to \Gamma \to \idmorph{\upi_0\Gamma}\text{.}
\end{equation}
 This sequence is -- in a certain sense -- an exact sequence of Lie 2-groups, although it is not exact on the level of morphisms. More precisely, it is a fibre sequence. We are not going to give a rigorous treatment of fibre sequences in Lie 2-groups, but the reader may keep this in mind as a guiding principle throughout this paper. 

% \begin{remark}
% If the 2-group $\Gamma$ is equal to $\act GH$ for a crossed module $t\maps H \to G$ then denote by $K$ the kernel of $t$ and by $C$ the cokernel of $t$. We have
% $$ \pi_0(\Gamma) \eq C \quad \text{and} \quad \Omega\Gamma \eq K$$ 
% and sequence \eqref{sequence} becomes
% $$ \mathcal BK \to \act GH \to \idmorph C \text{.}$$
% \end{remark}

\begin{proposition}[{{\cite[4.2.2]{breen3}}}]
\label{long}
Let $M$ be a paracompact manifold, and let $\Gamma$ be a smoothly separable Lie 2-group. Then, the sequence \eqref{sequence} induces a long exact sequence in non-abelian cohomology:
\begin{equation*}
\xymatrix{
 0 \ar[r] &
  \h^0(M,\mathcal B\upi_1\Gamma) \ar[r] & 
 \h^0(M,\Gamma)\ar[r]& 
 \h^0(M,\idmorph {\upi_0\Gamma}) \ar`r[d]`[l]`[llld]`[dll][dll] \\
 & \h^1(M,\mathcal B\upi_1\Gamma) \ar[r] & 
 \h^1(M,\Gamma)\ar[r]& 
 \h^1(M,\idmorph {\upi_0\Gamma}) \ar`r[d]`[l]`[llld]`[dll][dll] \\ 
 &\h^2(M,\mathcal B\upi_1\Gamma)\text{.} & & 
}
\end{equation*}
\end{proposition}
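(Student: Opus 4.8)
The plan is to read the asserted sequence as the long exact sequence associated to the fibre sequence \eqref{sequence} of Lie 2-groups, exactly as one reads the homotopy long exact sequence of a fibration. By Theorem \ref{th:geo} I may replace each cohomology set by isomorphism classes of its geometric model: $\h^0(M,\mathcal B\upi_1\Gamma)$ and $\h^1(M,\mathcal B\upi_1\Gamma)$ become classes of principal $\upi_1\Gamma$-bundles and of $\upi_1\Gamma$-bundle gerbes (i.e. $\h^1_{\class}(M,\upi_1\Gamma)$ and $\h^2_{\class}(M,\upi_1\Gamma)$), $\h^0(M,\idmorph{\upi_0\Gamma})$ is the group of smooth maps $M\to\upi_0\Gamma$, $\h^1(M,\idmorph{\upi_0\Gamma})\eq\h^1_{\class}(M,\upi_0\Gamma)$ is classes of principal $\upi_0\Gamma$-bundles, and $\h^2(M,\mathcal B\upi_1\Gamma)\eq\h^3_{\class}(M,\upi_1\Gamma)$. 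The horizontal arrows induced by $i\maps\mathcal B\upi_1\Gamma\to\Gamma$ and $\pi\maps\Gamma\to\idmorph{\upi_0\Gamma}$ are the extension functors recalled in Section \ref{sec:gerbesand2bundles}, passed to isomorphism classes. Throughout, smooth separability (Definition \ref{def:ss}) is the hypothesis that makes the construction work: it renders \eqref{4term} an exact sequence of Lie groups, so that the submersion $\Gamma_0\to\upi_0\Gamma$ admits local smooth sections, the inclusion $\upi_1\Gamma\to\ker(s)$ is a smooth embedding, and $\ker(s)\xrightarrow{t}\ker(\pi)$ is a smooth surjection.

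Second, I would make the two connecting maps explicit by the usual ``lift locally, then measure the defect'' recipe. For a smooth map $g\maps M\to\upi_0\Gamma$, fix a good cover and local lifts $\tilde g_i\maps U_i\to\Gamma_0$ of $g$; since $\pi$ is a homomorphism, $\tilde g_j\tilde g_i^{-1}$ lies in $\ker(\pi)\eq t(\ker(s))$ by \eqref{4term}, so pick $a_{ij}\maps U_{ij}\to\ker(s)$ with $t(a_{ij})\eq\tilde g_j\tilde g_i^{-1}$ and check that $a_{jk}a_{ij}a_{ik}^{-1}$ takes values in $\ker(s)\cap\ker(t)\eq\upi_1\Gamma$; the resulting $\upi_1\Gamma$-valued $2$-cocycle represents $\partial_0(g)\in\h^1(M,\mathcal B\upi_1\Gamma)$. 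The second connecting map $\partial_1\maps\h^1_{\class}(M,\upi_0\Gamma)\to\h^3_{\class}(M,\upi_1\Gamma)$ is the same construction one degree higher: lift the transition cocycle $g_{ij}$ of a $\upi_0\Gamma$-bundle $E$ to $\tilde g_{ij}\maps U_{ij}\to\Gamma_0$, choose connecting morphisms $f_{ijk}\maps\tilde g_{jk}\otimes\tilde g_{ij}\to\tilde g_{ik}$ in $\Gamma$ (possible and smooth locally because this set of morphisms, being nonempty, is a torsor under $\upi_1\Gamma$), and read off the associator defect on quadruple overlaps as a $\upi_1\Gamma$-valued $3$-cocycle. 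This class is precisely the obstruction $\delta([E])$, and by construction it agrees with the characteristic class of the lifting bundle $2$-gerbe $\mathbb L_E$ (Definition \ref{def:lifting2gerbe}). Independence of all auxiliary choices (cover, lifts, connecting morphisms, each carrying a $\upi_1\Gamma$-torsor of ambiguity) has to be verified, but is routine.

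Third, I would verify exactness one position at a time. For the low-degree part --- the entire $\h^0$ row and exactness at $\h^1(M,\mathcal B\upi_1\Gamma)$ --- every statement reduces to an elementary lifting or reduction assertion for principal bundles and abelian gerbes that follows from \eqref{4term} by a short cocycle chase: injectivity of the first arrow; the fact that a $\Gamma$-bundle has trivial associated $\upi_0\Gamma$-map iff it reduces to a $\upi_1\Gamma$-bundle; that $g$ lifts to a $\Gamma$-bundle iff $\partial_0(g)\eq0$; and that a $\upi_1\Gamma$-gerbe becomes trivial after extension to $\Gamma$ iff it lies in the image of $\partial_0$. The two degree-one positions are the substantial ones. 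Exactness at $\h^1(M,\idmorph{\upi_0\Gamma})$ is the statement that a $\upi_0\Gamma$-bundle $E$ lies in the image of $\pi_{*}\maps\h^1(M,\Gamma)\to\h^1_{\class}(M,\upi_0\Gamma)$, equivalently admits a $\Gamma$-lift, if and only if $\delta([E])\eq0$: from the cocycle description of $\partial_1$, vanishing of the $3$-cocycle is exactly what is needed to correct the $f_{ijk}$ into an honest $\Gamma$-bundle-gerbe cocycle $(\tilde g_{ij},f_{ijk})$, and conversely a lift trivializes that cocycle. Exactness at $\h^1(M,\Gamma)$ is the statement that a $\Gamma$-bundle gerbe with trivial associated $\upi_0\Gamma$-bundle is the extension of a $\upi_1\Gamma$-gerbe; after using $\Gamma$-gerbe isomorphisms to arrange that the transition cocycle lands in $\ker(\pi)$, this is a reduction along $t$ carried out at cocycle level.

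The main obstacle is exactly these two top positions, and the cleanest way to organise them --- and the conceptual heart of the ``new proof'' advertised in the introduction --- is to promote them to the bigroupoid level. Theorem \ref{th:A} (stated as Theorem \ref{thm:lift}) identifies the $\Gamma$-lifts of $E$ with trivializations of the abelian $2$-gerbe $\mathbb L_E$, so that a lift exists iff $[\mathbb L_E]\eq\delta([E])$ vanishes, giving exactness at $\h^1(M,\idmorph{\upi_0\Gamma})$; the reduction theorem (Theorem \ref{reduction}) supplies exactness at $\h^1(M,\Gamma)$. The set-level assertions of the present proposition then follow by passing to isomorphism classes, where equivalent (bi)groupoids are simultaneously empty or nonempty. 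The genuinely delicate points that this machinery is designed to absorb are the well-definedness of $\delta$ independently of the $\upi_1\Gamma$-torsors of local choices, and the care demanded by pointed-set (rather than group) exactness at the non-abelian spots, where $\Gamma_0$ need not be abelian and ``kernel equals image'' must be read through the distinguished trivial classes.
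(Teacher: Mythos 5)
Your proposal is correct and follows essentially the same route as the paper: the two substantial positions, $\h^1(M,\Gamma)$ and $\h^1(M,\idmorph{\upi_0\Gamma})$, are obtained exactly as in the text by passing Theorem \ref{reduction} and Theorem \ref{thm:lift} to isomorphism classes, with the connecting map $\delta$ realized by the lifting bundle 2-gerbe $\mathbb{L}_E$. The remaining low-degree positions, which you settle by cocycle chases using the exactness of \eqref{4term}, are handled in the paper by analogous (omitted) reduction and lifting theorems for principal groupoid bundles, so the difference is only presentational.
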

The last set in the long exact sequence of Proposition \ref{long} is defined as the classical \v Cech cohomology group $\h^3_{cl}(M,\upi_1\Gamma)$ with values in the abelian Lie group 
$\upi_1\Gamma$. In contrast to this 
there is no way to define $ \h^2$ (or higher) for general Lie 2-groups $\Gamma$, so that the sequence stops there. 
Many variants of Proposition \ref{long} can be found in the literature. The case of the automorphism 2-group $\mathrm{AUT}(H)$ \cite[Example 2.4.4]{NW11} is treated in \cite{giraud}. Discrete crossed modules and algebraic variants are treated in \cite{dedecker1} and \cite{dedecker2}.

According to Theorem \ref{th:geo} the various cohomology sets occurring in the sequence of Proposition \ref{long} have natural geometric interpretations in terms of 
$\Gamma$-bundles or $\Gamma$-2-bundles. The first goal of this paper is to understand the exactness of the sequence geometrically in terms of reduction problems and lifting problems of these bundles. 
In particular, our results give an independent proof of Proposition \ref{long}.
Moreover, we will promote the (set-theoretical) exactness   statements to statements about (bi-)groupoids of (2-)bundles. 

We remark that distinguishing between \emph{lifting} and \emph{reduction} problems is purely a matter of taste: 
a lift is defined exactly in the same way as a reduction. We decided to use the terminology \emph{reduction} in situations where an object comes from a \quot{simpler} structure (e.g., a principal groupoid bundle \emph{reduces} to an ordinary abelian bundle, cf section \ref{sec:reduction}), %TN: Oder besser mit Gerben?
while we use the terminology \emph{lift} when it comes from a \quot{more complicated} structure (e.g., a principal bundle \emph{lifts} to a non-abelian 2-bundle, see Section \ref{sec:lift2gerbe}).
%Furthermore, we will formulate the \quot{reduction theorems} in a more general form than the \quot{lifting theorems}, simply because we use the first ones to prove the latter ones. 

\setsecnumdepth{1}

\leer{
%%%%%%%%%%%%%%%%%%%%%%%%%%%%%%%%%%%%%%%%%%%%%%%%%%%%%%%%%%%%%%%%%%%%%%%%%%%%%%%%%%%%%%%%%%%%%%%%%%%%%%%%%%%%%%%%%%%%%%%%%%%%
\section{Reduction and Lifting for Groupoid Bundles}\label{sec:lift1bun}
%%%%%%%%%%%%%%%%%%%%%%%%%%%%%%%%%%%%%%%%%%%%%%%%%%%%%%%%%%%%%%%%%%%%%%%%%%%%%%%%%%%%%%%%%%%%%%%%%%%%%%%%%%%%%%%%%%%%%%%%%%%%

In this section we work over a paracompact manifold $M$, and with a smoothly separable Lie 2-group $\Gamma$. We discuss reduction and lifting problems for principal groupoid bundles.
In particular, we prove that the first row of the  sequence  of Proposition \ref{long} is exact:
\begin{equation}
\label{exact_row1}
\alxydim{}{
0 \ar[r] &
\h^0(M,\mathcal B\upi_1\Gamma) \ar[r] & 
\h^0(M,\Gamma)\ar[r] & 
\h^0(M,\idmorph {\upi_0\Gamma}) \ar[r] &
\h^1(M,\mathcal B\upi_1\Gamma)\text{.}}
\end{equation}
We use this section mainly as a warmup for Section \ref{sec:liftings}, and to introduce some concepts that we will use there. Another bundle-theoretical proof of the exactness has also appeared in \cite[Section 3]{murray5}.

\setsecnumdepth{2}

\subsection{Reduction of Groupoid Bundles to Abelian Bundles}

\label{sec:redbun}

We recall that we have the functor sequence \erf{sequence}:
\begin{equation*}
\xymatrix{
\mathcal B \upi_1\Gamma \ar[r]^-{i} & \Gamma \ar[r]^-\pi & \idmorph{\upi_0\Gamma}\text{.}
}
\end{equation*}
Extension of principal bundles  along $i$ and $\pi$ gives an induced sequence of functors
\begin{equation}\label{sequence:cat}
\xymatrix{
\bun{{\mathcal{B} \upi_1\Gamma}} M \ar[r]^-{i_*} & \bun\Gamma M \ar[r]^-{\pi_*} & \idmorph{C^{\infty}(M,\upi_0\Gamma)}\text{,}
}
\end{equation}
where we have used the canonical equivalence $\bun{\idmorph{G}} M \cong \idmorph{C^{\infty}(M,G)} $, see \cite[Example 2.2.4]{NW11}. More explicitly, if $P$ is a principal $\Gamma$-bundle over $M$, the associated map $\pi_{*}P\maps M \to \upi_0\Gamma$ is given as follows: one lifts a point $x\in M$  to an element $p$  in the total space $P$, takes its anchor $\alpha(p) \in \Gamma_0$, and projects to its equivalence class in $\upi_0\Gamma$. The sequence \erf{sequence:cat} realizes geometrically  the part 
\begin{equation*}
\alxydim{}{
\h^0(M,\mathcal B\upi_1\Gamma) \ar[r] & 
\h^0(M,\Gamma)\ar[r] & 
\h^0(M,\idmorph {\upi_0\Gamma})}
\end{equation*}
of the  sequence \erf{exact_row1}.

Theorem \ref{th:fib2gb} below states that the sequence \erf{sequence:cat} is a fibre sequence in groupoids. We denote by  $1 \nobr \in \nobr C^{\infty}(M,\upi_0\Gamma)$ the constant function with value $1 \in \upi_0\Gamma_0$, and start with the following observation:

\begin{lemma}
\label{lem:fib1gb}
Let $P$ be a principal $\upi_1\Gamma$-bundle over $M$. Then, $\pi_{*}(i_{*}(P)) \eq 1$. 
\end{lemma}

\begin{proof}
The assertion is clear because the composition $\pi \circ i$ is the constant functor. Since we will later need the notation anyway, we give a more explicit proof. 
Let us recall the definition of $i_{*}$ \cite[Definition 2.3.8]{NW11}.
The total space of $i_{*}(P)$ is
$(P  \times t^{-1}(1)) \;/\; \sim$,
where $(p,\gamma \circ \omega) \sim (p \cdot \gamma,\omega)$ for all $p\in P$, $\omega\in t^{-1}(1) \subseteq \Gamma_1$ and $\gamma\in \upi_1\Gamma$. The bundle projection is $(p,\omega) \mapsto \pi(p)$, the anchor is $(p,\omega) \mapsto s(\omega)$, and the $\Gamma$-action is $(p,\omega) \circ \omega'\eq (p,\omega \circ \omega')$ for $\omega \in \Gamma_1$ with $s(\omega) \eq t(\omega')$. In particular, since $s(\omega) \eq t(\omega)\eq1$ in $\upi_0(\Gamma)$, $\pi_{*}(i_{*}(P))\eq1$. \end{proof}

Principal $\Gamma$-bundles of the form $i_{*}(P)$ are examples of \emph{reducible} bundles. More generally, let $P$ be a principal $\Gamma$-bundle $P$ over $M$. A \emph{reduction of $P$ to an abelian bundle} is a principal $\mathcal{B}\upi_1\Gamma$-bundle $P_{\mathrm{red}}$ over $M$ and a bundle isomorphism $i_{*}(P_{\mathrm{red}}) \cong P$. 

Lemma \ref{lem:fib1gb} states that a principal $\Gamma$-bundle $P$ which admits a reduction to an abelian bundle satisfies $\pi_{*}(P) \eq 1$. Theorem \ref{th:fib2gb} below shows that the converse is also true. In order to prepare this statement, we denote by $\bun{\Gamma}M^{1}$ the full subgroupoid of $\bun\Gamma M$ of those principal $\Gamma$-bundles $P$ with $\pi_{*}(P) \eq 1$.
By Lemma \ref{lem:fib1gb}, the functor $i_{*}$ factors through a functor
\begin{equation}
\label{fib1gb}
i_{*}\maps \bun{\mathcal{B}\upi_1\Gamma}M \to \bun{\Gamma}M^1\text{.}
\end{equation}
We have the following  \quot{reduction theorem} for principal $\Gamma$-bundles:  

\begin{theorem}
\label{th:fib2gb}
The functor \erf{fib1gb} establishes an equivalence of groupoids:
\begin{equation*}
\bun{\mathcal{B}\upi_1\Gamma}M \cong \bun{\Gamma}M^1\text{.}
\end{equation*}
In particular, a principal $\Gamma$-bundle $P$ over $M$ admits a reduction to an abelian bundle if and only if $\pi_{*}(P)\eq1$;  in this case the reduction is unique up to unique  isomorphisms.  
\end{theorem} 

\begin{proof}
Let $P$ be a principal $\Gamma$-bundle over $M$ with anchor $\alpha\maps P \to \Gamma_0$ and $\pi_{*}(P) \eq 1$. Let $P_{\mathrm{red}} \subseteq P$ denote the subset of points with $\alpha(p)\eq1$. Let $s\maps U \to P$ be a local section, and consider the composition $\alpha \circ s\maps U \to \Gamma_0$. By assumption, the image of $\alpha \circ s$ is in contained  the kernel of $\pi\maps\Gamma_0 \to \upi_0\Gamma$, which is -- by exactness of \erf{4term} -- the image of the submersion $t\maps \mathrm{ker}(s) \to \Gamma_0$.   Thus -- after a possible refinement of $U$ -- $\alpha \circ s$ lifts to a smooth map $\gamma\maps U \to \mathrm{ker}(s)$, i.e., $t \circ \gamma \eq \alpha \circ s$. Now consider the new section $\tilde s \df s \circ \gamma$. Since $\alpha \circ \tilde s\eq1$, $\tilde s$ is a section into $P_{\mathrm{red}}$. It remains to notice that the action of $\Gamma$ on $P$  restricts to a transitive and free action of $\upi_1\Gamma$ on $P_{\mathrm{red}}$. This shows that $P_{\mathrm{red}}$ is a principal $\upi_1\Gamma$-bundle over $M$. 

We claim that $i_{*}(P_{\mathrm{red}}) \cong P$, which proves that the functor \erf{fib1gb} is essentially surjective. Indeed, in the notation of the proof of Lemma \ref{lem:fib1gb}, an isomorphism is given by
\begin{equation*}
\varphi\maps i_{*}(P_{\mathrm{red}}) \to P\maps (p,\omega) \mapsto p \circ \omega\text{;}
\end{equation*}
this is well-defined, smooth, fibre preserving and $\Gamma$-equivariant. In order to see that the functor \erf{fib1gb} is also full and faithful, we recall that the extension of a bundle morphism $\varphi\maps Q_1 \to Q_2$ is $(i_{*}\varphi)(q,\omega) \eq (\varphi(q),\omega)$. 
Now let $\eta\maps i_{*}(Q_1) \to i_{*}(Q_2)$ be a bundle morphism. Then, we define $\varphi\maps Q_1 \to Q_2$ by $\eta(q,1) \eq (\varphi(q),1)$. It is straightforward to check that this is well-defined and that the two assignments are inverses of each other.
\end{proof}

 We conclude with reducing Theorem \ref{th:fib2gb} to isomorphism classes of objects:

\begin{corollary}
The sequence \erf{exact_row1} is exact at $\h^0(M,\mathcal B\upi_1\Gamma)$ and $\h^0(M,\Gamma)$.
\end{corollary}

\subsection{Liftings of Functions to Groupoid Bundles}

\label{sec:liftfunct}

We start with the following observation about the smoothly separable Lie 2-group $\Gamma$: the projection functor $\pi\maps \Gamma \to \idmorph{\upi_0\Gamma}$ together with the 2-group multiplication
\begin{equation*}
m \maps \Gamma \times \mathcal{B}\upi_1\Gamma \to \Gamma
\end{equation*}
is a principal $\mathcal{B}\upi_1\Gamma$-2-bundle. The $\mathcal{B}\upi_1\Gamma$-bundle gerbe that corresponds to the 2-bundle $\Gamma$ under the equivalence of Theorem \ref{thm:equiv} is denoted by $\mathcal{G}_{\Gamma}$. 
Following \cite[Section 7.1]{NW11} it consists of the following data:
\begin{enumerate}

\item 
its surjective submersion is the projection $\pi\maps \Gamma_0 \to \upi_0\Gamma$.

\item
its principal $\upi_1\Gamma$-bundle over $\Gamma_0^{[2]}$ is $(s,t)\maps \Gamma_1 \to \Gamma_0^{[2]}$, with the action given by multiplication.

\item
its bundle gerbe product $\mathrm{pr}_{23}^{*}\Gamma_1 \otimes \mathrm{pr}_{12}^{*}\Gamma_1 \to \mathrm{pr}_{13}^{*}\Gamma_1$ is the composition in $\Gamma$, i.e., the product of $\gamma_{12} \in \mathrm{pr}_{12}^{*}\Gamma_1$ and $\gamma_{23} \in \mathrm{pr}_{23}^{*}\Gamma_1$ is $\gamma_{12} \circ \gamma_{23}$. 
\end{enumerate}
For preparation, we continue with two lemmata concerned with the bundle gerbe $\mathcal{G}_{\Gamma}$.
In the following we denote by $\trivlin_f \df M \lli{f} \times_t \Gamma_1$ the trivial principal $\Gamma$-bundle over $M$ with anchor $f$ \cite[Example 2.2.3]{NW11}.
\begin{lemma}
\label{lem:cantriv}
There exists an   isomorphism $\tau \maps i_{*}(\Gamma_1) \to \trivlin_{\Delta}$ of $\Gamma$-bundles over $\Gamma_0^{[2]}$, where $\Delta\maps \Gamma_0^{[2]} \to \Gamma_0$ is the difference map $\Delta(g_1,g_2)\df g_2^{-1}g_1$, such that the composition in $\Gamma$ is respected in the sense that the diagram
\begin{equation*}
\alxydim{@C=2cm@R=1.2cm}{\mathrm{pr}_{23}^{*}i_{*}(\Gamma_1) \otimes \mathrm{pr}_{12}^{*}i_{*}(\Gamma_1) \ar[d]_{i_{*}(\circ)}\ar[r]^-{\mathrm{pr}_{23}^{*}\tau \times \mathrm{pr}_{12}^{*}\tau}  & \mathrm{pr}_{23}^{*}\trivlin_{\Delta} \otimes \mathrm{pr}_{12}^{*}\trivlin_{\Delta} \ar@{=}[d] \\ \mathrm{pr}_{13}^{*}i_{*}(\Gamma_1) \ar[r]_{\tau} & \mathrm{pr}_{13}^{*}\trivlin_{\Delta} }
\end{equation*}
is commutative.
\end{lemma}

\begin{proof}
We construct a section $s\maps \Gamma_0^{[2]} \to i_{*}(\Gamma_1)$ such that $\alpha \circ s \eq \Delta$. Any such section induces the claimed isomorphism. In order to construct the section, we note that the composition of $\Delta$ with the projection $\pi\maps \Gamma_0 \to \upi_0\Gamma$ is trivial, so that $\Delta$ lifts locally to a smooth map $\gamma\maps U \to \mathrm{ker}(s)$ by the exactness of \erf{4term}, i.e., $t \circ \gamma \eq \Delta$. Consider $\id_{\mathrm{pr}_2}\maps U \to \Gamma_1$ and -- in the notation introduced in the proof of Lemma \ref{lem:fib1gb} --  the smooth map
\begin{equation*}
s \df (\id_{\mathrm{pr}_2} \cdot \gamma , \gamma^{-1})\maps U \to i_{*}(\Gamma_1)\text{.}
\end{equation*}
It is straightforward to check that this is a local section and satisfies $\alpha \circ s \eq \Delta$. 
\begin{comment}
\begin{itemize}
\item 
This is a local section. To see this, we recall that the bundle projection of $i_{*}(\Gamma_1)$ is the projection to the first factor followed by the projection of $\Gamma_1$, which is $(t,s)$. Thus,
\begin{equation*}
(t,s) \circ \mathrm{pr} \circ s  \eq (t(\id_{\mathrm{pr}_2} \cdot \gamma ), s(\id_{\mathrm{pr}_2} \cdot \gamma)) \eq (\mathrm{pr}_2 \cdot \Delta,\mathrm{pr}_{2}) \eq (\mathrm{pr}_{1},\mathrm{pr}_{2}) \eq \id_{\Gamma_0^{[2]}} \text{.}
\end{equation*}

\item
The condition for the anchor: we recall that the anchor of $i_{*}(\Gamma_1)$ is the source of the second factor. Thus,
\begin{equation*}
\alpha \circ s \eq s \circ \gamma^{-1} \eq t(\gamma) \eq \Delta\text{.}
\end{equation*}
\end{itemize}
\end{comment}
The difficult part is to check that the definition of $s$ does not depend on the choice of the lift $\gamma\maps U \to \mathrm{ker}(s)$; this implies that $s$ is in fact a global section. Let $\gamma'\maps U \to \mathrm{ker}(s)$ be another section. By exactness of the sequence \erf{4term} there is a smooth map $d\maps U \to \upi_1\Gamma$ such that $\gamma'\eq\gamma \cdot d$. Then,
\begin{equation*}
(\id_{\mathrm{pr}_2} \cdot \gamma' , \gamma'^{-1}) \eq(\id_{\mathrm{pr}_2} \cdot \gamma \cdot d, \gamma^{-1} \cdot d^{-1}) \sim (\id_{\mathrm{pr}_2} \cdot \gamma, d \circ (\gamma^{-1} \cdot d^{-1}))   \eq (\id_{\mathrm{pr}_2} \cdot \gamma, \gamma^{-1})\text{,}
\end{equation*}
where the last step uses the \quot{exchange law} between the composition and the multiplication in the 2-group $\Gamma$.
\begin{comment}
Indeed,
\begin{equation*}
d \circ (\gamma^{-1} \cdot d^{-1}) \eq (\id_1 \cdot d) \circ (\gamma^{-1} \cdot d^{-1}) \eq (\id_1 \circ \gamma^{-1}) \cdot (d \circ d^{-1}) \eq \gamma^{-1} \cdot \id_1 \eq \gamma^{-1}\text{.}
\end{equation*}
\end{comment}
Finally, the commutativity of the diagram is equivalent to the identity 
\begin{equation}
\label{eq:multsec}
i_{*}(\circ)(\mathrm{pr}_{23}^{*}s, \mathrm{pr}_{12}^{*}s) \eq \mathrm{pr}_{13}^{*}s
\end{equation}
for the section $s$. Indeed, over a point $(g_1,g_2,g_3)$, and  for local sections $\gamma_{12}$ around $(g_1,g_3)$ and $\gamma_{23}$ around $(g_2,g_3)$, we claim that $\gamma_{13} \df \id_{g_3^{-1}} \cdot ((\id_{g_2} \cdot \gamma_{12}) \circ (\id_{g_3} \cdot \gamma_{23}))$ is a valid local section around $(g_1,g_3)$. In order to see this, it suffices to check that $s(\gamma_{13})\eq1$ and $t(\gamma_{13})\eq\Delta(g_1,g_3)$. With these choices, the identity \erf{eq:multsec} is straightforward to check. \end{proof}

We recall that a trivialization of a $\Gamma$-bundle gerbe $\mathcal{H}$ over a manifold $X$ is a 1-isomorphism $\mathcal{T}\maps\mathcal{H} \to \mathcal{I}$, where $\mathcal{I}$ is the trivial $\Gamma$-bundle gerbe, consisting of the identity submersion $\id_X$, the trivial principal $\Gamma$-bundle $\trivlin_1$ for the constant map $1\maps X \to \Gamma_0$, and the identity bundle gerbe product.

\begin{lemma}
\label{lem:trivggamma}
The $\Gamma$-bundle gerbe $i_{*}(\mathcal{G}_{\Gamma})$ is trivializable.
\end{lemma}

\begin{proof}
A trivialization is given by the principal $\Gamma$-bundle $T \df \trivlin_{i}$ over $\Gamma_0$, for $i\maps\Gamma_0 \to \Gamma_0$ the inversion of the group $\Gamma_0$, and by the bundle isomorphism
\begin{equation*}
\alxydim{@C=1.5cm}{\trivlin_1 \otimes \pi_1^{*}T \cong \trivlin_{\mathrm{pr}_2} \otimes \trivlin_{\Delta}  \ar[r]^-{\id \otimes \tau^{-1}} &   \pi_2^{*}T \otimes i_{*}(\Gamma_1)}
\end{equation*}
over $\Gamma_0^{[2]}$ which is defined using the bundle isomorphism $\tau \maps i_{*}(\Gamma_1) \to \trivlin_{\Delta}$ of Lemma \ref{lem:cantriv}. The required compatibility with the bundle gerbe product is easy to check. \end{proof}

Now we start using the $\mathcal{B}\upi_1\Gamma$-bundle gerbe $\mathcal{G}_{\Gamma}$. For a smooth map $f\maps M \to \upi_0\Gamma$, we denote the $\mathcal{B}\upi_1\Gamma$-bundle gerbe $f^{*}\mathcal{G}_{\Gamma}$ over $M$ by $\mathcal{L}_f$, and call it the \emph{lifting bundle gerbe} associated to $f$.
The assignment $f \mapsto \mathcal{L}_f$ induces a map 
\begin{equation*}
\alxydim{}{
\h^0(M,\idmorph {\upi_0\Gamma}) \ar[r] &
\h^1(M,\mathcal B\upi_1\Gamma)}
\end{equation*}
in non-abelian cohomology; this is the connecting homomorphism of the long exact sequence \erf{long}.
In this section we are interested in the following structure:

\begin{definition}
Let $f\maps M \to \upi_0\Gamma$ be a smooth map. A \emph{$\Gamma$-lift of $f$} is  a principal $\Gamma$-bundle $P$ over $M$ such that $\pi_{*}(P) \eq f$. $\Gamma$-lifts of $f$ form a full subgroupoid of $\bun\Gamma M$ that we denote by $\lift_{\Gamma}(f)$.  \end{definition}

We shall construct $\Gamma$-lifts of $f$ from trivialization of $\mathcal{L}_f$. Suppose $\mathcal{T}$ is a trivialization of $\mathcal{L}_f$, consisting of a principal $\mathcal{B}\upi_1\Gamma$-bundle $Q$ over $Z\df M \lli{f}\times_{\pi} \Gamma_0$, and of an isomorphism
\begin{equation*}
\chi\maps \mathrm{pr}_1^{*}Q \to \mathrm{pr}_2^{*}Q \otimes (\beta \times \beta)^{*}(\Gamma_1)
\end{equation*}
of principal $\mathcal{B}\upi_1\Gamma$-bundles over $Z^{[2]} \df Z \times_M Z$,  where $\beta \df \mathrm{pr}_2\maps Z \to \Gamma_0$. We further use the notation   $\zeta \df \mathrm{pr}_1\maps Z \to M$. We claim:

\begin{lemma}
\label{lem:liftcalcgp}
Consider the principal $\Gamma$-bundle $P \df i_{*}(Q) \otimes \trivlin_{\beta}$ over $Z$. Then:

\begin{enumerate}[(i)]

\item
The  isomorphism
\begin{equation*}
\alxydim{@C=0.7cm@R=1.2cm}{\mathrm{pr}_1^{*}P \ar@{=}[r]& \mathrm{pr}_1^{*}i_{*}(Q) \otimes\mathrm{pr}_1^{*}\trivlin_{\beta} \ar[rr]^-{\chi \otimes \id} && \mathrm{pr}_2^{*}i_{*}(Q) \otimes i_{*}((\beta\times\beta)^{*}\Gamma_1)\otimes\mathrm{pr}_1^{*}\trivlin_{\beta} \ar[d]^-{\id \otimes (\beta \times \beta)^{*}\tau \otimes \id} \\&&& \mathrm{pr}_2^{*}i_{*}(Q) \otimes\mathrm{pr}_2^{*}\trivlin_{\beta} \ar@{=}[r] & \mathrm{pr}_2^{*}P}
\end{equation*}
defines a descent structure on $P$ for the surjective submersion $\zeta\maps Z \to M$. Here, $\tau$ is the bundle isomorphism of Lemma \ref{lem:cantriv}.

\item
The quotient bundle $\zeta_{\,!}(P)$ is a $\Gamma$-lift of $f$. 

\end{enumerate}
\end{lemma}

\begin{proof}
For (i) we have to show that the given isomorphism satisfies the cocycle condition over $Z^{[3]}$. This follows from the compatibility of $\chi$ with the bundle gerbe product, and from the commutativity of the diagram in  Lemma \ref{lem:cantriv}. In order to prove (ii) we have to show $\pi_{*}(\zeta_{!}(P)) \eq f$, which is equivalent to $\pi_{*}(P) \eq f \circ \zeta \eq \pi \circ \beta$. Indeed,
\begin{equation*}
\pi_{*}(P) \eq \pi_{*}(i_{*}(Q) \otimes \trivlin_{\beta}) \eq \pi_{*}(i_{*}(P)) \cdot \pi_{*}(\trivlin_{\beta}) \eq \pi \circ \beta\text{,}
\end{equation*}
using the fact that the extension $\pi_{*}$ is a monoidal functor and using Lemma \ref{lem:fib1gb}.
\end{proof}

We denote the quotient bundle $\zeta_{\,!}(P)$ of Lemma \ref{lem:liftcalcgp} by $P_{\mathcal{T}}$.  
It is easy to see that a 2-morphism $\mathcal{T}_1 \Rightarrow \mathcal{T}_2$ between trivializations induces a bundle morphism $P_{\mathcal{T}_1} \to P_{\mathcal{T}_2}$. Thus, we have defined a functor
\begin{equation}
\label{eq:liftgp}
\triv(\mathcal{L}_f) \to \lift_\Gamma(f)\text{.}
\end{equation}
This functor underlies the following \quot{lifting theorem}:

\begin{theorem}
\label{th:liftbun}
Let $f\maps M \to \upi_0\Gamma$ be a smooth map. Then, the functor \erf{eq:liftgp}
is an equivalence of groupoids:
\begin{equation*}
\triv(\mathcal{L}_f) \cong \lift_\Gamma(f)\text{.}
\end{equation*}
In particular, there exists a $\Gamma$-lift of $f$ if and only if the $\mathcal{B}\upi_1\Gamma$-bundle gerbe $\mathcal{L}_f$ is trivializable. \end{theorem}

\begin{proof}
Let $\triv(i_{*}(\mathcal{L}_f))^1$ denote the full subgroupoid of trivializations of $i_{*}(\mathcal{L}_f)$ where all principal $\Gamma$-bundles $Q$ satisfy $\pi_{*}(Q)\eq1$.  By Theorem
\ref{th:fib2gb} the functor
\begin{equation*}
i_{*}\maps\triv(\mathcal{L}_f) \to \triv(i_{*}(\mathcal{L}_f))^1
\end{equation*}
is an equivalence of groupoids. 
Let $\des_{\zeta}(\bun\Gamma-)$ be the groupoid of descent data for the sheaf $\bun\Gamma-$ of principal $\Gamma$-bundles and the surjective submersion $\zeta$, and let  $\des_{\zeta}(\bun\Gamma-)^{\pi \circ \beta}$ denote the full subgroupoid where all principal $\Gamma$-bundles $Q$ have $\pi_{*}(Q)\eq\pi \circ \beta$. The calculations of Lemma \ref{lem:liftcalcgp} define a functor
\begin{equation*}
- \otimes \trivlin_{\beta}\maps \triv(i_{*}(\mathcal{L}_f))^1 \to \des_{\zeta}(\bun\Gamma-)^{\pi \circ \beta}\text{,}
\end{equation*}
which is an equivalence since $- \otimes \trivlin_{\beta^{-1}}$ is an inverse functor. Finally, descent theory provides another equivalence $\lift_\Gamma(f) \cong \des_{\zeta}(\bun{\Gamma}-)^{\pi \circ \beta}$. By construction, the functor \erf{eq:liftgp} is the composition of the equivalences collected above, and thus an equivalence. 
\end{proof}

We conclude with a consequence of Theorem \ref{th:liftbun} for non-abelian cohomology:

\begin{corollary}
The sequence \erf{exact_row1} is exact at $\h^0(M,\idmorph {\upi_0\Gamma})$. \end{corollary}

\setsecnumdepth{1}

}

%%%%%%%%%%%%%%%%%%%%%%%%%%%%%%%%%%%%%%%%%%%%%%%%%%%%%%%%%%%%%%%%%%%%%%%%%%%%%%%%%%%%%%%%%%%%%%%%%%%%%%%%%%%%%%%%%%%%%%%%%%%%%%
\section{Reduction and Lifting for Non-Abelian Gerbes}
%%%%%%%%%%%%%%%%%%%%%%%%%%%%%%%%%%%%%%%%%%%%%%%%%%%%%%%%%%%%%%%%%%%%%%%%%%%%%%%%%%%%%%%%%%%%%%%%%%%%%%%%%%%%%%%%%%%%%%%%%%%%%%

\label{sec:liftings}
 
In this section we discuss reduction and lifting problems in the context of 2-bundles over a paracompact manifold $M$, and for a smoothly separable Lie 2-group $\Gamma$. 
This gives a geometric proof and understanding of the long exact sequence of Proposition \ref{long}.
% 
% 
% In particular, we give a geometrical proof of the  exactness of the second row in the long exact sequence of Proposition \ref{long}:
% \begin{equation}
% \label{exact_row2}
% \alxydim{@C=0.7cm}{
% \h^0(M,\idmorph {\upi_0\Gamma}) \ar[r] &
% \h^1(M,\mathcal B\upi_1\Gamma) \ar[r] & 
% \h^1(M,\Gamma)\ar[r] & 
% \h^1(M,\idmorph {\upi_0\Gamma}) \ar[r] &
% \h^2(M,\mathcal B\upi_1\Gamma)\text{.}}
% \end{equation}
Although we  formulate and prove the results of this section in the language of $\Gamma$-bundle gerbes, all results carry over to principal 2-bundles via the equivalence of Theorem \ref{thm:equiv}.  

\setsecnumdepth{2}

\leer{ 
%%%%%%%%%%%%%%%%%%%%%%%%%%%%%%%%%%%%%%%%%%%%%%%%%%%%%%%%%%%%%%%%%%%%%%%%%%%%%%%%%%%%%%%%%%%%%%%%%%%%%%%%%%%%%%%%%%%%%%%%%%%%%%
\subsection{Reduction of Abelian Gerbes to Functions}
%%%%%%%%%%%%%%%%%%%%%%%%%%%%%%%%%%%%%%%%%%%%%%%%%%%%%%%%%%%%%%%%%%%%%%%%%%%%%%%%%%%%%%%%%%%%%%%%%%%%%%%%%%%%%%%%%%%%%%%%%%%%%%

We recall from Section \ref{sec:liftfunct}  that there is a $\mathcal{B}\upi_1\Gamma$-bundle gerbe $\mathcal{G}_{\Gamma}$ over $\upi_0\Gamma$ associated the Lie 2-group $\Gamma$, and that  a smooth function $f\maps M \to \upi_0\Gamma$ defines the lifting bundle gerbe $\mathcal{L}_f \df f^{*}\mathcal{G}_{\Gamma}$ over $M$. We have seen in Lemma \ref{lem:trivggamma} that the extension $i_{*}(\mathcal{G}_{\Gamma})$ of $\mathcal{G}_{\Gamma}$ along $i\maps \mathcal{B}\upi_1\Gamma \to \Gamma$ has a canonical trivialization. Since the extension functor $i_{*}$ commutes with pullbacks, we get:

\begin{corollary}
\label{co:cantriv}
Let $f\maps M \to \upi_0\Gamma$ be a smooth map. Then, $i_{*}(\mathcal{L}_f)$ is canonically trivializable. 
\end{corollary}

Let $\mathcal{G}$ be a $\mathcal{B}\upi_1\Gamma$-bundle gerbe over $M$. A \emph{reduction of $\mathcal{G}$ to a function} is a smooth function $f\maps M \to \upi_0\Gamma$ such that $\mathcal{G} \cong \mathcal{L}_f$. 
Corollary \ref{co:cantriv} and the functorality of the extension functor $i_{*}$ imply that $i_{*}(\mathcal{G})$ is trivializable for every bundle gerbe $\mathcal{G}$ that can  be reduced to a function. The converse is also true in the sense stated below as Theorem \ref{th:redfunc}. In order to formalize this situation, 
we define the following bigroupoid $\grb{{\mathcal{B}\upi_1\Gamma}}M^{i-or}$ of \emph{$i$-oriented $\mathcal{B}\upi_1\Gamma$-bundle gerbes over $M$}:
\begin{enumerate}[(i)]
\item 
The objects are $\mathcal{B}\upi_1\Gamma$-bundle gerbes $\mathcal{G}$ over $M$, together with a trivialization $\mathcal{T}\maps i_{*}(\mathcal{G}) \to \mathcal{I}$ of their extension along $i$. 
\item
The 1-morphisms are 1-morphisms $\mathcal{A}\maps\mathcal{G}_1 \to \mathcal{G}_2$ between $\mathcal{B}\upi_1\Gamma$-bundle gerbes over $M$, together with 2-morphisms
\begin{equation*}
\alxydim{@C=0.7cm@R=1.2cm}{i_{*}(\mathcal{G}_1) \ar[rr]^{i_{*}(\mathcal{A})} \ar[dr]_{\mathcal{T}_1}="1" && i_{*}(\mathcal{G}_2) \ar@{=>}"1"|-*+{\varphi} \ar[dl]^{\mathcal{T}_2} \\ & \mathcal{I} &}
\end{equation*}

\item
The 2-morphisms are 2-morphisms whose extension is compatible with the 2-morphisms of the involved 1-morphisms in the evident way.

\end{enumerate}
 Corollary \ref{co:cantriv} implies that we get a 2-functor
\begin{equation}
\label{eq:redfunc}
\idmorph{C^{\infty}(M,\upi_0\Gamma)} \to \grb{{\mathcal{B}\upi_1\Gamma}}M^{i-or}\maps f \mapsto (\mathcal{L}_f,f^{*}\mathcal{T}_{\Gamma})\text{,}
\end{equation} 
where $\idmorph{C^{\infty}(M,\upi_0\Gamma)}$ is regarded as a bigroupoid with only identity 1-morphisms and only identity 2-morphisms, and $\mathcal{T}_{\Gamma}$ is the trivialization constructed in
 Lemma \ref{lem:trivggamma}. 
Given this 2-functor we have the following \quot{reduction theorem}:

\begin{theorem}
\label{th:redfunc}
The 2-functor \erf{eq:redfunc} establishes an equivalence of bigroupoids:
\begin{equation*}
 \idmorph{C^{\infty}(M,\upi_0\Gamma)} \cong \grb{{\mathcal{B}\upi_1\Gamma}}M^{i-or}\text{.}
\end{equation*}
In particular, every $i$-orientation of a $\mathcal{B}\upi_1\Gamma$-bundle gerbe $\mathcal{G}$ determines a reduction of $\mathcal{G}$ to a function.
\end{theorem}

\begin{proof}
We start with the proof  that the functor is essentially surjective. Let $(\mathcal{G},\mathcal{T})$ be an object in $\grb{{\mathcal{B}\upi_1\Gamma}}M^{i-or}$. The first part is to construct a preimage $f$. Suppose the bundle gerbe $\mathcal{G}$ consists of a surjective submersion $\pi\maps Y \to M$, a principal $\mathcal{B}\upi_1\Gamma$-bundle $P$ over $Y^{[2]}$, and a bundle gerbe product $\mu$, and suppose that the trivialization $\mathcal{T}$ consists of a principal $\Gamma$-bundle $Q$ over $Y$, and of an isomorphism
\begin{equation*}
\chi\maps \mathrm{pr}_1^{*}Q \to \mathrm{pr}_2^{*}Q \otimes i_{*}(P)
\end{equation*}
of $\Gamma$-bundles over $Y^{[2]}$. With Lemma \ref{lem:fib1gb} we get $\mathrm{pr}_1^{*}(\pi_{*}Q) \eq \mathrm{pr}_2^{*}(\pi_{*}Q)$, so that $\pi_{*}Q\maps Y \to \upi_0\Gamma$ descends to a unique smooth map $f\maps M \to \upi_0\Gamma$. The second part is to show that $f$ is an essential preimage of $(\mathcal{G},\mathcal{T})$, i.e., we have to construct a 1-morphism $(\mathcal{L}_f,f^{*}\mathcal{T}_{\Gamma}) \cong (\mathcal{G},\mathcal{T})$. The common refinement of the surjective submersions of $\mathcal{L}_f$ and $\mathcal{G}$ is $Z \df \Gamma_0 \lli{\pi}\times_{\pi_{*}(Q)} Y$, which comes with the projections $y\maps Z \to Y$ and $g\maps Z \to \Gamma_0$. We consider the principal $\Gamma$-bundle $W \df \trivlin_{g^{-1}} \otimes y^{*}Q$ over $Z$, which satisfies $\pi_{*}W \eq 1$ and thus reduces to a principal $\mathcal{B}\upi_1\Gamma$-bundle $W_{\mathrm{red}}$ by Theorem \ref{th:fib2gb}. Notice that
\begin{equation*}
\alxydim{@C=1.4cm@R=0.3cm}{i_{*}(\Gamma_1) \otimes \mathrm{pr}_1^{*}W \ar[r]^-{\tau \otimes \id} &   \trivlin_{\mathrm{pr}_2^{*}g^{-1}\cdot \mathrm{pr}_1^{*}g} \otimes \mathrm{pr}_1^{*}W \ar@{=}[d] \\ & \mathrm{pr}_2^{*}\trivlin_{g^{-1}} \otimes \mathrm{pr}_1^{*}Q \ar[r]^-{\id \otimes \chi} & \mathrm{pr}_2^{*}\trivlin_{g^{-1}} \otimes \mathrm{pr}_2^{*}Q \otimes i_{*}(Q) \eq \mathrm{pr}_2^{*}W \otimes i_{*}(P)}
\end{equation*} 
is an isomorphism in the groupoid $\bun \Gamma{Z \times_M Z}^1$, and hence determines by Theorem \ref{th:fib2gb} an isomorphism
\begin{equation*}
\alpha\maps \mathrm{pr}_2^{*}W_{\mathrm{red}} \otimes i_{*}(P) \to i_{*}(\Gamma) \otimes \mathrm{pr}_1^{*}W
\end{equation*}
of $\mathcal{B}\upi_1\Gamma$-bundles over $Z \times_M Z$. The pair $(W_{\mathrm{red}},\alpha)$ is a 1-isomorphism $\mathcal{A}\maps \mathcal{L}_f \to \mathcal{G}$. The condition that it exchanges the trivializations $f^{*}\mathcal{T}_{\Gamma}$ and $\mathcal{T}$ is now straightforward to check.

We continue with checking that the functor \erf{eq:redfunc} is an equivalence on Hom-groupoids. Assume first that $f_1,f_2\maps M \to \upi_0\Gamma$ are different smooth maps, so that the Hom-groupoid between $f_1$ and $f_2$ is empty. We show that there exists no 1-morphism $\mathcal{A}\maps\mathcal{L}_{f_1} \to \mathcal{L}_{f_2}$ that exchanges the trivialization $f_1^{*}\mathcal{T}_{\Gamma}$ with $f_2^{*}\mathcal{T}_{\Gamma}$. Indeed, this can be seen by extending the involved bundles along $\pi$\maps the bundle of the 1-morphism $\mathcal{A}$ has the trivial map, while the bundles of the two trivializations have the different maps $f_1$ and $f_2$.

Now we assume that the two maps are equal, $f_1\eq f_2\eq\maps f$, in which case the Hom-groupoid between $f_1$ and $f_2$ has one object and one morphism. The single object is sent to the identity $\id\maps \mathcal{L}_f \to \mathcal{L}_f$. Suppose $\mathcal{A}\maps\mathcal{L}_f \to \mathcal{L}_f$ is another 1-morphism. Then $i_{*}(\mathcal{A}) \cong i_{*}(\id)$. Let $P$ be the principal $\Gamma$-bundle on $M$ which marks the difference between $\id$ and $\mathcal{A}$. Then, $i_{*}(P)$ is trivializable, and so is $P$. Thus, $\id \cong \mathcal{A}$. This shows that the functor between Hom-groupoids is essentially surjective. That it is full and faithful follows because the 2-morphisms in $\grb{{\mathcal{B}\upi_1\Gamma}}M^{i-or}$ are unique.  
\end{proof}

Theorem \ref{th:redfunc} induces on isomorphism classes of objects:

\begin{corollary}
The sequence \erf{exact_row2} is exact at $\h^1(M,\mathcal{B}\upi_1\Gamma)$. \end{corollary}

 }
%%%%%%%%%%%%%%%%%%%%%%%%%%%%%%%%%%%%%%%%%%%%%%%%%%%%%%%%%%%%%%%%%%%%%%%%%%%%%%%%%%%%%%%%%%%%%%%%%%%%%%%%%%%%%%%%%%%%%%%%%%%%%%
\subsection{Reduction to Abelian Gerbes}\label{sec:reduction}
%%%%%%%%%%%%%%%%%%%%%%%%%%%%%%%%%%%%%%%%%%%%%%%%%%%%%%%%%%%%%%%%%%%%%%%%%%%%%%%%%%%%%%%%%%%%%%%%%%%%%%%%%%%%%%%%%%%%%%%%%%%%%%

Now we look at the sequence
\begin{equation}\label{sequence:bicat}
\xymatrix{
\grb{{\mathcal{B} \upi_1\Gamma}} M \ar[r]^-{i_*} & \grb\Gamma M \ar[r]^-{\pi_*} & \idmorph{\bun{{\mathcal{B}\upi_0\Gamma}} M}\text{.}
}
\end{equation}
of bigroupoids and 2-functors induced by the functors $i$ and $\pi$, 
where we implicitly used the canonical equivalence $\grb{{\idmorph{\upi_0\Gamma}}} M \cong \idmorph{\bun{{\mathcal{B}\upi_0\Gamma}} M}$ \cite[Example 5.1.9]{NW11}. 

We want to prove that this sequence is a ``fibre sequence'' in bigroupoids. To formulate this properly we note that the groupoid $\bun{{\mathcal{B}\upi_0\Gamma}} M$ has a canonical ``base point'', namely the trivial principal $\mathcal{B}\upi_0\Gamma$-bundle $\trivlin_1$. Next we give an explicit definition of the ``homotopy fibre'' of this base point.

\begin{definition}\phantomsection\label{def:oriented}
\begin{enumerate}[(i)]

\item Let $\mathcal{G}$ be a $\Gamma$-bundle gerbe over $M$. A \underline{$\pi$-orientation} of $\mathcal G$ is a global section of the principal bundle $\pi_*(\mathcal G)$. 

\item A 1-morphism $\varphi\maps \mathcal G \to \mathcal G'$ between $\pi$-oriented $\Gamma$-bundle gerbes is called \uline{$\pi$-orientation preserving} if the induced morphism $\pi_*(\varphi)$ of $\upi_0\Gamma$-bundles preserves the  sections, i.e $\pi_*(\varphi) \nobr\circ\nobr s \eq s'$.
\end{enumerate}
\end{definition}

The bigroupoid consisting of all $\pi$-oriented $\Gamma$-bundle gerbes, all $\pi$-orientation-preserving morphisms, and all 2-morphisms is denoted $\grbor{\Gamma}{M}\pi$.
The composition $\pi \circ i$ in sequence \erf{sequence:bicat} is  the trivial 2-group homomorphism. This implies directly the following lemma.

\begin{lemma}\label{lemma:teileins}
The 2-functor $\pi_* \circ i_*$ is canonically equivalent to the trivial 2-functor that sends each object to the trivial principal $\mathcal{B}\upi_0\Gamma$-bundle  and each morphism to the trivial morphism. In particular, the 2-functor $i_*$ lifts to a 2-functor
\begin{equation}
\label{eq:ior}
i_{or}\maps \grb {{\mathcal B \upi_1\Gamma}}{M} \to \grbor{\Gamma}{M}\pi\text{.}
\end{equation}
\end{lemma}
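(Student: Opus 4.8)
The plan is to deduce both claims formally, from the pseudofunctoriality of the extension construction together with the triviality of $\pi \circ i$, so that essentially no gerbe-level computation is required. First I would invoke the fact that the assignment $\Lambda \mapsto \Lambda_{*}$ of extension 2-functors is pseudofunctorial in the Lie 2-group homomorphism $\Lambda$; in particular there is a canonical pseudonatural equivalence $\pi_{*} \circ i_{*} \cong (\pi \circ i)_{*}$ (part of the extension package of \cite[Sections 5.2 and 6.2]{NW11}). It therefore suffices to identify the single 2-functor $(\pi \circ i)_{*}$.

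Next I would use that $\pi \circ i \maps \grb{{\mathcal{B}\upi_1\Gamma}}{M}$-valued homomorphism $\mathcal{B}\upi_1\Gamma \to \idmorph{\upi_0\Gamma}$ is trivial: it sends the unique object to $\pi(1) \eq 1 \in \upi_0\Gamma$ and every morphism to an identity, so it factors through the trivial 2-group $\idmorph{\{1\}}$. Consequently its extension factors through $\grb{{\idmorph{\{1\}}}}{M}$ and is the constant 2-functor at the trivial $\idmorph{\upi_0\Gamma}$-bundle gerbe. Under the canonical equivalence $\grb{{\idmorph{\upi_0\Gamma}}}{M} \cong \idmorph{\bun{{\mathcal{B}\upi_0\Gamma}}{M}}$ of \cite[Example 5.1.9]{NW11} the trivial gerbe corresponds to $\trivlin_1$. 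Combining this with the previous step produces the desired canonical equivalence $\eta$ between $\pi_{*} \circ i_{*}$ and the constant 2-functor at $\trivlin_1$, which is the first assertion.

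To obtain the lift $i_{or}$, I would read off a $\pi$-orientation from $\eta$. For a $\mathcal{B}\upi_1\Gamma$-bundle gerbe $\mathcal{G}$ the component $\eta_{\mathcal{G}} \maps \pi_{*}(i_{*}(\mathcal{G})) \to \trivlin_1$ is an isomorphism of principal $\upi_0\Gamma$-bundles; transporting the canonical unit section of $\trivlin_1$ along the inverse $\eta_{\mathcal{G}}^{-1}$ yields a global section $s_{\mathcal{G}} \df \eta_{\mathcal{G}}^{-1} \circ s_0$ of $\pi_{*}(i_{*}(\mathcal{G}))$, i.e.\ a $\pi$-orientation in the sense of Definition \ref{def:oriented}(i). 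I would then set $i_{or}(\mathcal{G}) \df (i_{*}(\mathcal{G}), s_{\mathcal{G}})$. For a $1$-morphism $\mathcal{A}\maps \mathcal{G} \to \mathcal{G}'$ the naturality square of $\eta$ (whose right-hand leg is $\id_{\trivlin_1}$, since the constant 2-functor sends $\mathcal{A}$ to an identity, and which is a strict equation because the target $\idmorph{\bun{{\mathcal{B}\upi_0\Gamma}}{M}}$ carries only identity 2-cells) gives $\eta_{\mathcal{G}'} \circ \pi_{*}(i_{*}(\mathcal{A})) \eq \eta_{\mathcal{G}}$, and hence $\pi_{*}(i_{*}(\mathcal{A})) \circ s_{\mathcal{G}} \eq s_{\mathcal{G}'}$; thus $i_{*}(\mathcal{A})$ is $\pi$-orientation preserving in the sense of Definition \ref{def:oriented}(ii). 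The $2$-morphism case is automatic from the coherence of $\eta$, so $i_{or}$ is a well-defined 2-functor lifting $i_{*}$.

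The one point that needs genuine care -- and the step I expect to be the main obstacle -- is the honest verification of the second paragraph \emph{as a statement about 2-functors}, not merely on objects: one must check that extension along the trivial homomorphism is pseudonaturally and coherently equivalent to the constant 2-functor at $\trivlin_1$, which amounts to tracking how the equivalence $\grb{{\idmorph{\upi_0\Gamma}}}{M} \cong \idmorph{\bun{{\mathcal{B}\upi_0\Gamma}}{M}}$ intertwines the factorization of $\pi \circ i$ through $\idmorph{\{1\}}$. Everything else is formal once the pseudofunctoriality of $\Lambda \mapsto \Lambda_{*}$ and the naturality of $\eta$ are in hand.
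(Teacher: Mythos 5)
Your proposal is correct and follows the same route as the paper, which simply observes that $\pi\circ i$ is the trivial 2-group homomorphism and declares the lemma to follow "directly"; you have merely filled in the formal steps (pseudofunctoriality of $\Lambda\mapsto\Lambda_{*}$, identification of $(\pi\circ i)_{*}$ with the constant 2-functor at $\trivlin_1$, and reading off the $\pi$-orientation and its preservation from the components of the resulting equivalence, using that the target bigroupoid has only identity 2-cells). No gap.
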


Let $\mathcal{G}$ be a $\Gamma$-bundle gerbe over $M$. A \emph{reduction of $\mathcal{G}$ to an abelian gerbe} is a $\mathcal{B}\upi_1\Gamma$-bundle gerbe $\mathcal{G}_{\mathrm{red}}$ such that $\mathcal{G} \cong i_{*}(\mathcal{G}_{\mathrm{red}})$. Lemma \ref{lemma:teileins}  shows one part of the ``exactness'' of sequence \erf{sequence:bicat}: reducible bundle gerbes are $\pi$-orientable. The other part is to show that the homotopy fibre $\grbor{\Gamma}{M}\pi$ agrees with $\grb{{B \upi_1\Gamma}} M$. 

\begin{theorem}
\label{reduction}
The 2-functor \erf{eq:ior} is an equivalence of bigroupoids:
\begin{equation*}
\grb {{\mathcal B \upi_1\Gamma}}{M} \cong \grbor{\Gamma}{M}\pi\text{.}
\end{equation*}
In particular, every $\pi$-orientation of a $\Gamma$-bundle gerbe $\mathcal{G}$ determines (up to isomorphism) a reduction of $\mathcal{G}$ to an abelian gerbe. \end{theorem}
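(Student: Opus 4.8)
The plan is to prove that $i_{or}$ is an equivalence of bigroupoids by verifying essential surjectivity on objects together with an equivalence of Hom-groupoids for every pair of objects, in each case bootstrapping from the bundle-level reduction theorem (Theorem \ref{th:fib2gb}) and the monoidality of the extension functors $i_{*}$ and $\pi_{*}$. The organizing principle is that a $\Gamma$-bundle gerbe is cocycle data valued in the monoidal sheaf $\bun{\Gamma}{-}$ of principal $\Gamma$-bundles, and that a $\pi$-orientation supplies precisely the $0$-cochain needed to shift this data into the locus where $\pi_{*}\eq1$, on which Theorem \ref{th:fib2gb} identifies $\Gamma$-bundles with $\mathcal{B}\upi_1\Gamma$-bundles.

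For essential surjectivity on objects, take a $\pi$-oriented gerbe $(\mathcal{G},s)$ and write $\mathcal{G}\df(p\maps Y \to M,P,\mu)$. The tautological trivialization of $p^{*}\pi_{*}(\mathcal{G})$ over $Y$ turns the section $s$ into a smooth map $\sigma\maps Y \to \upi_0\Gamma$ trivializing the $\upi_0\Gamma$-valued transition cocycle $\pi_{*}(P)$ on $Y^{[2]}$. Since $\Gamma$ is smoothly separable, $\pi\maps\Gamma_0 \to \upi_0\Gamma$ is a submersion, so after refining $Y$ I may lift $\sigma$ to $\tilde\sigma\maps Y \to \Gamma_0$ with $\pi \circ \tilde\sigma \eq \sigma$. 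Twisting $P$ by the trivial $\Gamma$-bundle $\trivlin_{\tilde\sigma}$ over $Y$ --- a 1-isomorphism over the same cover, hence associativity-preserving --- yields an isomorphic gerbe $\mathcal{G}'$ whose transition bundle $P'$ satisfies $\pi_{*}(P')\eq1$ by monoidality of $\pi_{*}$. Theorem \ref{th:fib2gb} then reduces $P'$ to a $\mathcal{B}\upi_1\Gamma$-bundle $P'_{\mathrm{red}}$, and since $i_{*}$ is monoidal and fully faithful onto bundles with $\pi_{*}\eq1$, the product $\mu'$ transports uniquely to an associative product $\mu'_{\mathrm{red}}$ on $P'_{\mathrm{red}}$, associativity being forced by faithfulness of $i_{*}$. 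The abelian gerbe $\mathcal{G}_{\mathrm{red}}\df(Y,P'_{\mathrm{red}},\mu'_{\mathrm{red}})$ then satisfies $i_{*}(\mathcal{G}_{\mathrm{red}})\cong\mathcal{G}$, and the twist by $\tilde\sigma$ is arranged exactly so that this isomorphism carries the canonical $\pi$-orientation of Lemma \ref{lemma:teileins} to the given section $s$.

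It remains to show that $i_{or}$ induces an equivalence on each Hom-groupoid. A $\pi$-orientation-preserving 1-morphism $\mathcal{A}\maps i_{*}\mathcal{G}_1 \to i_{*}\mathcal{G}_2$ between images of abelian gerbes is given by a principal $\Gamma$-bundle $A$ over a common refinement; Lemma \ref{lemma:teileins} identifies both $\pi$-orientations with the trivial one, so the orientation-preserving condition forces $\pi_{*}(A)\eq1$. By Theorem \ref{th:fib2gb}, $A$ reduces to a $\mathcal{B}\upi_1\Gamma$-bundle $A_{\mathrm{red}}$, which assembles into a 1-morphism $\mathcal{G}_1 \to \mathcal{G}_2$ of abelian gerbes with $i_{*}(A_{\mathrm{red}})\cong A$, giving essential surjectivity on 1-morphisms. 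Since 2-morphisms carry no orientation constraint and are morphisms of the underlying bundles, the full faithfulness of $i_{*}$ from Theorem \ref{th:fib2gb} shows that 2-morphisms between $i_{*}$-images correspond bijectively to 2-morphisms of abelian gerbes. Hence each Hom-functor is essentially surjective, full and faithful, and $i_{or}$ is an equivalence of bigroupoids.

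The main obstacle is the object-level reduction: coordinating the section $s$, the tautological trivialization of $\pi_{*}(\mathcal{G})$ over $Y$, and the refinement needed to lift $\sigma$, so that the twisted cocycle satisfies $\pi_{*}(P')\eq1$ \emph{on the nose} and the reduced product is both well defined and compatible with $s$ rather than with some other orientation. Once this is in place, the 1- and 2-morphism statements are a routine transport of the bundle-level reduction of Theorem \ref{th:fib2gb} through the definition of morphisms of $\Gamma$-bundle gerbes.
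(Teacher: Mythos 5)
Your proposal is correct and follows essentially the same route as the paper's proof: describe the $\pi$-orientation locally as a map $Y\to\upi_0\Gamma$, lift it along the submersion $\Gamma_0\to\upi_0\Gamma$ after refining the cover, twist the transition bundle by the corresponding trivial $\Gamma$-bundles so that $\pi_*$ of the result is $1$, reduce to a $\mathcal{B}\upi_1\Gamma$-bundle and transport the product, and handle Hom-groupoids by showing the orientation-preserving condition forces $\pi_*$ of the morphism's bundle to be trivial. The only cosmetic difference is that you cite the bundle-level reduction as a standalone theorem, whereas the paper invokes that reduction implicitly at the corresponding steps.
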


\begin{proof}
We show first that $i_{or}$ is essentially surjective. Let $\calg$ be a $\Gamma$-bundle gerbe with  $\pi$-orientation $s$. We denote by $\zeta\maps Y \to M$ the surjective submersion of $\mathcal{G}$, by $P$ its principal $\Gamma$-bundle over $Y^{[2]}$, and by $\mu$ its bundle gerbe product. The $\pi$-orientation $s$ can be described locally as a map $s\maps Y \to \upi_0\Gamma$ satisfying the condition
\begin{equation}\label{schnitt}
\pi_*(P) \cdot \zeta_1^*s \eq \zeta_2^*s
\end{equation}
where $\pi_*(P) \maps Y^{[2]} \to \upi_0\Gamma$. We may assume that $s$ lifts along $\pi\maps \Gamma_0 \to \upi_0\Gamma$ to a smooth map $t\maps Y \to \Gamma_0$, otherwise we pass to an isomorphic bundle gerbe by a refinement of the surjective submersion $\zeta$.  Now we consider the following $\mathcal{B}\upi_1\Gamma$-bundle gerbe $\calg_{\mathrm{red}}$. Its surjective submersion is $\zeta$. We recall from \cite{NW11}[Example 2.2.3] that for each smooth map $f:X \to \upi_0\Gamma$ there is a \emph{trivial principal $\Gamma$-bundle} $\trivlin_f$ over $X$.
The principal $\Gamma$-bundle
\begin{equation*}
P' \df \trivlin_{(\zeta_2^*t)^{-1}} \otimes P \otimes \trivlin_{\zeta_1^*t} \end{equation*}
satisfies $\pi_{*}(P')\eq1$ due to \erf{schnitt}, and so defines a principal $\mathcal{B}\upi_1\Gamma$-bundle $P'_{\mathrm{red}}$; this is the principal bundle of $\mathcal{G}_{\mathrm{red}}$. 
Finally,  the isomorphism 
%TN: hier referenz auf thm:fib2gb weggelassen
\begin{equation*}
\mu'\df \id \otimes \mu \otimes \id\maps \quad \trivlin_{(\zeta_3^*t)^{-1}} \otimes \zeta_{23}^*P \otimes \zeta_{12}^*P \otimes \trivlin_{\zeta_1^*t}
\to \trivlin_{(\zeta_3^*t)^{-1}} \otimes \zeta_{13}^*P \otimes \trivlin_{\zeta_1^*t}
\end{equation*}
of principal $\Gamma$-bundles over $Y^{[3]}$ reduces to a bundle gerbe product for $\mathcal{G}_{\mathrm{red}}$. We claim that $\calg$ and $i_{*}(\calg_{\mathrm{red}})$ are isomorphic in $\grb \Gamma M^{\pi-or}$. Indeed, an orientation preserving isomorphism $\calg \to i_{*}(\calg_{\mathrm{red}})$ is given by the pair $(\trivlin_{t^{-1}},\id)$.

It remains to show that $i_{or}$ is fully faithful. We consider two $\mathcal{B}\upi_1\Gamma$-bundle gerbes $\mathcal{G}_1$ and $\mathcal{G}_2$. 
Suppose $\mathcal{A}\maps i_{*}(\mathcal{G}_1) \to i_{*}(\mathcal{G}_2)$  is a 1-morphism that respects 
the canonical $\pi$-orientations of Lemma \ref{lemma:teileins}. If $\mathcal{A}$ consists of a principal $\Gamma$-bundle over some 
common refinement $Z$, it follows that $\pi_{*}(Q)$ descends to a smooth map $q\maps M \to \upi_0\Gamma$. 
The condition that $\mathcal{A}$ preserves the $\pi$-orientations requires $q$ to be the constant map. 
This in turn shows that $\pi_{*}Q\eq1$, which implies that $\mathcal{A} \cong i_{*}(\mathcal{A}_{\mathrm{red}})$ by 
%TN: Theorem \ref{th:fib2gb}.
 similar reduction as above. This shows that $i_{or}$ is essentially surjective on Hom-groupoids. That it is fully faithful 
on Hom-groupoids follows by observing 
%TN: wieder verweis weggelassen
that the 2-morphisms between 1-morphisms $i_{*}(\mathcal{B}_1)$ and $i_{*}(\mathcal{B}_2)$ are 
exactly the 2-morphisms between $\mathcal{B}_1$ and $\mathcal{B}_2$.
\end{proof}

\begin{corollary}
The sequence of Proposition \erf{long} is exact at $\h^1(M,\Gamma)$.  
\end{corollary}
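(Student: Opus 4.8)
The plan is to decategorify the bigroupoid-level reduction Theorem \ref{reduction} to obtain exactness on the level of isomorphism classes. Using the classification of Theorem \ref{th:geo}, I identify $\h^1(M,\Gamma)$ with isomorphism classes of $\Gamma$-bundle gerbes over $M$, the set $\h^1(M,\mathcal{B}\upi_1\Gamma)$ with isomorphism classes of $\mathcal{B}\upi_1\Gamma$-bundle gerbes, and $\h^1(M,\idmorph{\upi_0\Gamma})$ with isomorphism classes of principal $\upi_0\Gamma$-bundles (via the equivalence $\grb{{\idmorph{\upi_0\Gamma}}}M \cong \idmorph{\bun{{\mathcal{B}\upi_0\Gamma}}M}$ used in sequence \erf{sequence:bicat}). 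Under these identifications the image of $i_*$ is exactly the set of classes $[\mathcal{G}]$ reducible to an abelian gerbe, i.e. $\mathcal{G}\cong i_*(\mathcal{G}_{\mathrm{red}})$, and the kernel of $\pi_*$ is the set of classes with $\pi_*(\mathcal{G})$ isomorphic to the trivial principal $\upi_0\Gamma$-bundle $\trivlin_1$.

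For the inclusion of the image into the kernel I would invoke Lemma \ref{lemma:teileins}: since $\pi_*\circ i_*$ is canonically equivalent to the trivial 2-functor, any $\mathcal{G}\cong i_*(\mathcal{G}_{\mathrm{red}})$ has $\pi_*(\mathcal{G})$ isomorphic to $\trivlin_1$, so $[\mathcal{G}]$ lies in the kernel of $\pi_*$. This direction requires no new work beyond Lemma \ref{lemma:teileins}.

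For the reverse inclusion I would use the elementary fact that a principal $\upi_0\Gamma$-bundle is trivial if and only if it admits a global section. Thus $[\mathcal{G}]\in\ker(\pi_*)$ precisely when $\pi_*(\mathcal{G})$ has a global section, i.e. when $\mathcal{G}$ carries a $\pi$-orientation $s$ in the sense of Definition \ref{def:oriented}. The pair $(\mathcal{G},s)$ is then an object of $\grbor{\Gamma}{M}\pi$, and by the essential surjectivity part of Theorem \ref{reduction} there is a $\mathcal{B}\upi_1\Gamma$-bundle gerbe $\mathcal{G}_{\mathrm{red}}$ with $i_{or}(\mathcal{G}_{\mathrm{red}})\cong(\mathcal{G},s)$. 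Forgetting the $\pi$-orientation yields $i_*(\mathcal{G}_{\mathrm{red}})\cong\mathcal{G}$, so $[\mathcal{G}]=i_*([\mathcal{G}_{\mathrm{red}}])$ lies in the image of $i_*$. Combining the two inclusions gives exactness at $\h^1(M,\Gamma)$.

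The genuine content is entirely carried by Theorem \ref{reduction}; the corollary is merely its passage to isomorphism classes. The only points that require care are the identification of the pointed-set kernel of $\pi_*$ with $\pi$-orientability, which rests on the section-versus-triviality criterion for principal bundles, and the observation that the equivalence $i_{or}$ remembers the orientation while the image of $i_*$ in cohomology does not -- so the last step consists precisely in forgetting the chosen section. Neither of these is a serious obstacle, and I do not expect any difficulty beyond this bookkeeping.
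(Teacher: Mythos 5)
Your proposal is correct and is essentially the paper's own argument: the corollary is stated immediately after Theorem \ref{reduction} with no separate proof precisely because it is the passage to isomorphism classes that you carry out, with Lemma \ref{lemma:teileins} giving image $\subseteq$ kernel and the essential surjectivity of $i_{or}$ (combined with the section-versus-triviality criterion identifying the pointed-set kernel of $\pi_*$ with $\pi$-orientability) giving the converse. The bookkeeping you flag is exactly the content the authors left implicit, and you have handled it correctly.
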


\begin{example}
The terminology \quot{$\pi$-orientation} is  inspired by the  example of \textit{Jandl gerbes}, which play an important role in unoriented sigma models \cite{schreiber1,nikolaus2}. The Jandl-2-group $\mathcal J U(1)$ is the 2-group induced by the 
crossed module 
\begin{equation*}
\alxydim{}{U(1) \ar[r]^{0} & \mathbb{Z} / 2}
\end{equation*}
with $\mathbb{Z} /2 $ acting on $U(1)$ by inversion; see \cite[Section 2.4]{NW11}. The fibre sequence \eqref{sequence} is here
$$ \mathcal B U(1) \to \mathcal J U(1) \to \idmorph{(\mathbb{Z}/2)}\text{.}$$
A $\mathcal J U(1)$-bundle gerbe $\mathcal G$ is called 
$\textit{Jandl gerbe}$ 
 \cite{nikolaus2} (there is also a connection whose discussion  we omit here). Now,
 $\pi_*(\mathcal G)$ is a $\mathbb{Z}/2$-bundle called the 
\textit{orientation bundle} of the Jandl gerbe.  It is crucial for the definition of unoriented surface holonomy. Hence Theorem \ref{reduction} shows that an oriented Jandl gerbe is just an ordinary  $\mathcal B U(1)$-bundle gerbe.
\end{example}

%%%%%%%%%%%%%%%%%%%%%%%%%%%%%%%%%%%%%%%%%%%%%%%%%%%%%%%%%%%%%%%%%%%%%%%%%%%%%%%%%%%%%%%%%%%%%%%%%%%%%%%%%%%%%%%%%%%%%%%%%%%%%%
\subsection{The lifting bundle 2-gerbe}\label{sec:lift2gerbe}
%%%%%%%%%%%%%%%%%%%%%%%%%%%%%%%%%%%%%%%%%%%%%%%%%%%%%%%%%%%%%%%%%%%%%%%%%%%%%%%%%%%%%%%%%%%%%%%%%%%%%%%%%%%%%%%%%%%%%%%%%%%%%%

In this section we come to the last step of the exact sequence of Proposition \ref{long}, namely to the connecting homomorphism 
\begin{equation*}
\h^1(M, \idmorph{\upi_0\Gamma}) \to \h^2(M, \mathcal{B} \upi_1 \Gamma)\text{.}
\end{equation*}
The geometric objects that represent classes in $\h^2(M, \mathcal{B} \upi_1\Gamma)$ are \emph{$\upi_1\Gamma$-bundle 2-gerbes}  \cite{stevenson2}. A bundle 2-gerbe is a higher analogue of a bundle gerbe, and can be defined for a general abelian Lie-group $A$. 

\begin{definition}
An \underline{$A$-bundle 2-gerbe} over $M$ is a surjective submersion $\pi\maps Y \to M$, a $\mathcal{B}A$-bundle gerbe $\calg$ over $Y^{[2]}$, a 1-isomorphism 
$$ \mathcal{M}\maps \pi_{23}^*\calg \otimes \pi_{12}^* \calg \to \pi_{13}^* \calg$$
of $\mathcal{B}A$-bundle gerbes over $Y^{[3]}$ and a 2-isomorphism 
\begin{equation*}
\alxydim{@R=1.2cm}{
\pi_{34}^*\mathcal{G} \otimes \pi_{23}^*\calg \otimes \pi_{12}^* \calg \ar[r]\ar[d]& \pi_{34}^*\calg \otimes \pi_{13}^* \calg\ar[d]\ar@{=>}[ld]|*+{\mu} \\
\pi_{24}^*\mathcal{G} \otimes \pi_{12}^*\calg  \ar[r]                        & \pi_{14}^* \calg 
}
\end{equation*}
over $Y^{[4]}$ satisfying the pentagon axiom.
\end{definition}
\begin{remark}
\begin{enumerate}
\item
Following the terminology of Definition \ref{def:grb} and the cohomological count it would be more logical to call this a $\mathcal{BB}A$-bundle 2-gerbe. But we have decided to follow the naming used in the literature, also because bundle 2-gerbes can not be defined for general Lie 2-groups $\Gamma$.

\item 
As pointed out above, every $A$-bundle 2-gerbe $\mathbb{G}$ has a characteristic class $[\mathbb{G}] \in \h^2(M,\mathcal{B}A)$. Bundle 2-gerbes are up to isomorphism classified by this class \cite{stevenson2}.
\end{enumerate}
\end{remark}

%TN: We recall from Section \ref{sec:liftfunct} 
We note that the Lie 2-group $\Gamma$ determines a $\mathcal{B}\upi_1\Gamma$-bundle gerbe $\mathcal{G}_{\Gamma}$ over $\upi_0\Gamma$~: its submersion is the projection $\pi\maps \Gamma_0 \to \upi_0\Gamma$, its principal $\mathcal{B}\upi_1\Gamma$-bundle over $\Gamma_0^{[2]}$ is  $\Gamma_1$, and the multiplication $\mu$ is given by the composition in $\Gamma$. We infer that $\mathcal{G}_{\Gamma}$ is multiplicative in the sense of \cite{carey4}, which means that is carries the following structure.
\begin{enumerate}[(i)]
\item
There is an isomorphism
$$ \mathcal{M}_\Gamma\maps p_1^* \calg_\Gamma \otimes p_2^* \calg_\Gamma \to m^*\calg_\Gamma$$
over $\upi_0\Gamma \times \upi_0\Gamma$, where $p_1,p_2$ are the projections and $m$ is the multiplication.

In order to construct $\mathcal{M}_{\Gamma}$ we recall that $\mathcal{G}_{\Gamma}$ is the bundle gerbe associated to the principal $\mathcal{B}\upi_1\Gamma$-2-bundle $\Gamma$ over $\upi_0\Gamma$ via the 2-functor $\mathscr{E}$ of \cite[Section 7.1]{NW11}. Since $\upi_1\Gamma$ is central in $\Gamma_1$, the 2-group multiplication $M\maps \Gamma \times \Gamma \to \Gamma$ can be seen as a 2-bundle morphism
\begin{equation*}
M\maps P _1^{*}\Gamma \otimes p_2^{*}\Gamma \to m^{*}\Gamma
\end{equation*}
over $\upi_0\Gamma \times \upi_0\Gamma$. Since the 2-functor $\mathscr{E}$ is moreover a morphism between pre-2-stacks \cite[Proposition 7.1.8]{NW11}, it converts this 2-bundle morphism into the claimed isomorphism $\mathcal{M}_{\Gamma}$.

\item
The isomorphism $\mathcal{M}_{\Gamma}$ is associative in the sense that there is a 2-isomorphism 
\begin{equation*}
\alxydim{@R=1.2cm}{
p_{34}^*\mathcal{G} \otimes p_{23}^*\calg \otimes p_{12}^* \calg \ar[r]\ar[d]& p_{34}^*\calg \otimes p_{13}^* \calg\ar[d]\ar@{=>}[ld]|*+{\mu_\Gamma~}\\
p_{24}^* \otimes p_{12}^*\calg  \ar[r]                        & p_{14}^* \calg 
}
\end{equation*}
that satisfies the pentagon identity. This 2-isomorphism is simply induced by the fact that the 2-group multiplicative $M$ is strictly associative.

%TN: Ich bin nicht ganz sicher, ob die nicht strikt multiplikativ ist...
%
%KW: Die ist strikt multiplicativ, aber ich denke wir sollten
%    das hier ignorieren. 
\end{enumerate}

Now let $E$ be a principal $\upi_0\Gamma$-bundle over $M$. The idea is to define a $\upi_1\Gamma$-bundle 2-gerbe $\mathbb{L}_E$ whose surjective submersion is the bundle projection $E \to M$. We denote by $\delta_n\maps  E^{[n+1]} \to \upi_0\Gamma^{n}$ the  \quot{difference maps} given by $e_0 \cdot \mathrm{pr}_i(\delta_{n}(e_0,e_1,\ldots,e_n)) \eq e_i$.

\begin{definition}
\label{def:lifting2gerbe}
Let $E$ be a principal  $\upi_0\Gamma$-bundle. The \emph{lifting bundle 2-gerbe} $\mathbb{L}_E$ is given by the surjective submersion 
$E \to M$, the bundle gerbe $\delta_1^*\calg_\Gamma$, the multiplication $\delta_2^*\mathcal{M}_\Gamma$ and the associator $\delta_3^*\mu_\Gamma$.
\end{definition}

\begin{remark}
\label{rem:cs}
In the context of Chern-Simons theories with a gauge group $G$, there exists a bundle 2-gerbe $\mathbb{CS}_E(\mathcal{G})$ constructed from a principal $G$-bundle $E$ over $M$ and a multiplicative $\mathcal{B}\ueins$-bundle gerbe $\mathcal{G}$ over $G$ \cite{carey4,waldorf5}. A similar construction has also been proposed in \cite{jurco1}. In the particular case that $G\eq \upi_0\Gamma$ and $\ueins \eq \upi_1\Gamma$,  these constructions coincide with the one of Definition \ref{def:lifting2gerbe}, i.e.,
\begin{equation*}
\mathbb{CS}_E(\mathcal{G}_{\Gamma}) \eq \mathbb{L}_E\text{.}
\end{equation*}
\end{remark}

Next we justify the name \quot{lifting bundle 2-gerbe}: we show that $\mathbb{L}_E$ is  the obstruction to lift the bundle $E$ to a $\Gamma$-bundle gerbe. Moreover, the possible lifts of $E$  correspond to trivializations of $\mathbb{L}_E$. In order to produce a precise statement, we first give an explicit definition of the homotopy fibre of the 2-functor
\begin{equation*}
\pi_* \maps  \grb\Gamma M \to \idmorph{\bun{{\mathcal{B}\upi_0\Gamma}} M}\text{.}
\end{equation*}

\begin{definition}\label{def:lift}
Let $E$ be a     principal $\upi_0\Gamma$-bundle. The bigroupoid $\lift_\Gamma(E)$ is defined as follows.
\begin{itemize}

\item An object is a \emph{$\Gamma$-lift of $E$}: a  pair $(\mathcal{G},\varphi)$ consisting of a  $\Gamma$-bundle gerbe $\calg$ and of a  bundle isomorphism $\varphi\maps \pi_*(\calg) \to E$.

\item 
A 1-morphism between objects $(\mathcal{G}_1,\varphi_1)$ and $(\mathcal{G}_2,\varphi_2)$ is a 1-morphism $\mathcal{B}\maps  \mathcal{G}_1 \to \mathcal{G}_2$ such that $\varphi_2 \circ \pi_{*}(\mathcal{B}) \eq \varphi_1$.

\item 
A 2-morphism between 1-morphisms $\mathcal{B}$ and $\mathcal{B}'$ is just a 2-morphism. 
\end{itemize}
\end{definition}

\begin{remark}
For the trivial $\upi_0\Gamma$-bundle $E \eq M \times \upi_0\Gamma$ the groupoid $\lift_\Gamma(E)$ agrees with the groupoid of $\pi$-oriented $\Gamma$-bundle gerbes introduced in Definition \ref{def:oriented}.
\end{remark}

The bigroupoid we want to compare with $\lift_\Gamma(E)$ is the 2-groupoid $\triv(\mathbb{L}_E)$ of trivializations of the lifting gerbe $\mathbb{L}_E$; see  {\cite[Definition 11.1]{stevenson2}} and \cite[Section 5.1]{waldorf8}. 

\begin{theorem}\label{thm:lift}
Let $E$ be a principal  $\upi_0\Gamma$-bundle over $M$, and let $\mathbb{L}_E$ be the associated lifting bundle 2-gerbe $\mathbb{L}_E$. Then there is an equivalence of bigroupoids
$$ \triv(\mathbb{L}_E) \cong \lift_\Gamma(E).$$
In particular, $E$ admits a $\Gamma$-lift  if and only if $\mathbb{L}_E$ is trivializable.
\end{theorem}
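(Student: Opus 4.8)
The plan is to realise both bigroupoids as descent data for $\Gamma$-bundle gerbes along the surjective submersion $E \to M$, and then to trade the $\pi$-oriented $\Gamma$-gerbes that appear for abelian gerbes by means of the reduction theorem (Theorem \ref{reduction}). The two essential inputs are thus the descent theory of \cite{nikolaus2} and Theorem \ref{reduction}.

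First I would reformulate $\lift_\Gamma(E)$ of Definition \ref{def:lift} through descent. Given a $\Gamma$-lift $(\mathcal{G},\varphi)$ I may, after refining its surjective submersion, assume that $\mathcal{G}$ has surjective submersion $E \to M$; since $\grb{\Gamma}{-}$ is a $2$-stack (Theorem \ref{thm:equiv} together with the descent theory of \cite{nikolaus2}), $\mathcal{G}$ is then the same as a descent object along $E \to M$. The pullback of $E$ to its own total space carries the tautological section, so $\varphi$ endows the restriction $\mathcal{H} \df \mathcal{G}|_E$ with a canonical $\pi$-orientation. A lift of $E$ therefore amounts to: a $\pi$-oriented $\Gamma$-bundle gerbe $\mathcal{H}$ over $E$, a descent $1$-isomorphism $\mathcal{D}$ over $E^{[2]}$ whose image $\pi_{*}(\mathcal{D})$ is the difference map $\delta_1 \maps E^{[2]} \to \upi_0\Gamma$, a cocycle $2$-isomorphism over $E^{[3]}$ compatible with the composition in $\Gamma$, and the usual coherence over $E^{[4]}$. (For trivial $E$ this recovers $\grbor{\Gamma}{M}\pi$, as in the Remark after Definition \ref{def:lift}.)

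Second I would apply Theorem \ref{reduction} across the simplicial pieces $E^{[\bullet]}$. Over $E$ the canonical orientation identifies $\mathcal{H}$ with $i_{*}(\mathcal{S})$ for an essentially unique $\mathcal{B}\upi_1\Gamma$-bundle gerbe $\mathcal{S}$ over $E$, and this $\mathcal{S}$ is the gerbe datum of a trivialization of $\mathbb{L}_E$. The descent isomorphism $\mathcal{D}$ is then an isomorphism of $\Gamma$-gerbes over $E^{[2]}$ with $\pi_{*}(\mathcal{D}) = \delta_1$, and the key point is that the controlling twist is precisely $i_{*}(\delta_1^{*}\mathcal{G}_{\Gamma})$: because $\mathcal{G}_{\Gamma} = \mathscr{E}(\Gamma)$, its extension $i_{*}(\mathcal{G}_{\Gamma})$ carries the transition data of the $\Gamma$-structure, so $\mathcal{D}$ reduces to a $1$-isomorphism $\delta_1^{*}\mathcal{G}_{\Gamma} \otimes \mathrm{pr}_1^{*}\mathcal{S} \to \mathrm{pr}_2^{*}\mathcal{S}$ of $\mathcal{B}\upi_1\Gamma$-gerbes. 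Likewise the cocycle $2$-isomorphism over $E^{[3]}$ reduces to one compatible with $\delta_2^{*}\mathcal{M}_{\Gamma}$, and the coherence over $E^{[4]}$ to compatibility with $\delta_3^{*}\mu_{\Gamma}$. Comparing with Definition \ref{def:lifting2gerbe} and the definition of $\triv(\mathbb{L}_E)$, this reduced datum is exactly a trivialization of $\mathbb{L}_E$. The construction is manifestly $2$-functorial, and its inverse is obtained by applying $i_{*}$ to a trivialization and assembling the resulting $\Gamma$-descent object; hence $\triv(\mathbb{L}_E) \cong \lift_\Gamma(E)$, and the last clause follows since a lift exists if and only if this bigroupoid is non-empty.

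The hard part will be the intertwining used in the second step: one must check that $i_{*}$ carries the multiplicative structure $(\mathcal{M}_{\Gamma},\mu_{\Gamma})$ of $\mathcal{G}_{\Gamma}$ over to the composition and the exchange law of $\Gamma$ that govern the descent cocycle, so that reduced descent data satisfy the trivialization axioms and no others. This rests on $\mathcal{G}_{\Gamma} = \mathscr{E}(\Gamma)$ together with $\mathscr{E}$ being a morphism of pre-$2$-stacks -- the same input that produced $\mathcal{M}_{\Gamma}$ -- and on carrying Theorem \ref{reduction} coherently over $E$, $E^{[2]}$, $E^{[3]}$ and $E^{[4]}$ rather than over a single base. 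A secondary, purely bookkeeping point is that refining the surjective submersion of a lift changes the associated trivialization only up to coherent equivalence, which is guaranteed by the stack property of $\triv(\mathbb{L}_E)$.
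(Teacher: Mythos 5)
Your proposal is correct and follows essentially the same route as the paper: the published proof likewise rewrites $\lift_\Gamma(E)$ as descent data along $\zeta\maps E \to M$ using the stack property from \cite{nikolaus2}, applies Theorem \ref{reduction} to reduce the $\pi$-oriented $\Gamma$-gerbe over $E$ to an abelian one, and handles the step you flag as the hard part via Lemma \ref{hilfslemma}, which trades $1$-morphisms $\mathcal{A}$ with $h_{\mathcal{A}}\eq h$ for morphisms twisted by $h^{*}\mathcal{G}_{\Gamma}$ compatibly with composition (hence with $\mathcal{M}_{\Gamma}$ and $\mu_{\Gamma}$). The resulting bigroupoid is then identified with $\triv(\mathbb{L}_E)$ exactly as you describe.
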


Theorem \ref{thm:lift} is proved in Section \ref{sec:proof}. Before that we want to present two corollaries. First we recall that the bigroupoid of trivializations of an $A$-bundle 2-gerbe is a torsor over the monoidal bigroupoid of $\mathcal{B}A$-bundle gerbes \cite[Lemma 2.2.5]{waldorf8}. From Theorem \ref{thm:lift} we get the following two implications:

\begin{corollary}
\label{co:action}
The bigroupoid $\lift_\Gamma(E)$ is a torsor over the monoidal bigroupoid of $\mathcal{B}\upi_1\Gamma$-bundle gerbes over $M$, i.e., the $\mathcal{B}\upi_1\Gamma$-bundle gerbes over $M$ act on the $\Gamma$-lifts of $E$ in such a way that on isomorphism classes of objects a free and transitive action is induced.
\end{corollary}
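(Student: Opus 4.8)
The plan is to read the statement off from Theorem \ref{thm:lift} combined with the general torsor property recalled immediately above. The key observation is that $\mathbb{L}_E$ is, by Definition \ref{def:lifting2gerbe}, a $\upi_1\Gamma$-bundle 2-gerbe; hence \cite[Lemma 2.2.5]{waldorf8}, applied with $A \df \upi_1\Gamma$, already endows the bigroupoid $\triv(\mathbb{L}_E)$ with the structure of a torsor over the monoidal bigroupoid $\grb{{\mathcal{B}\upi_1\Gamma}}M$ of $\mathcal{B}\upi_1\Gamma$-bundle gerbes over $M$. Explicitly, this action tensors a trivialization of $\mathbb{L}_E$ with a $\mathcal{B}\upi_1\Gamma$-bundle gerbe, and it is free and transitive in the bicategorical sense.

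First I would fix an equivalence $F\maps \triv(\mathbb{L}_E) \to \lift_\Gamma(E)$ as provided by Theorem \ref{thm:lift}, together with a quasi-inverse $G$ and the coherence 2-isomorphisms $F \circ G \cong \id$ and $G \circ F \cong \id$. Next I would transport the torsor structure along $F$: the action of $\grb{{\mathcal{B}\upi_1\Gamma}}M$ on $\lift_\Gamma(E)$ is defined by the rule $(\mathcal{G},\varphi) \cdot \mathcal{H} \df F\bigl(G(\mathcal{G},\varphi) \otimes \mathcal{H}\bigr)$, and its associativity and unit constraints are produced from those of the original action by splicing in the coherence data of $F$ and $G$. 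This is the standard transport of structure along an equivalence of bigroupoids and introduces nothing beyond coherence bookkeeping.

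Finally, for the assertion on isomorphism classes I would invoke that an equivalence of bigroupoids induces a bijection on isomorphism classes of objects. Under $F$ this bijection intertwines the action of the isomorphism classes of $\grb{{\mathcal{B}\upi_1\Gamma}}M$ on those of $\triv(\mathbb{L}_E)$ with the transported action on those of $\lift_\Gamma(E)$; freeness and transitivity therefore pass from the torsor property of $\triv(\mathbb{L}_E)$ to $\lift_\Gamma(E)$, which is exactly the claim.

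The only step that is not entirely automatic is the transport in the middle paragraph, where one must confirm that the full coherence data of a torsor over a monoidal bigroupoid survive the passage through $F$. This is, however, purely formal -- every piece of structure in sight is preserved by equivalences of bigroupoids -- and the operative conclusion, a free and transitive action on isomorphism classes, is immediate once the bijection induced by $F$ is in hand.
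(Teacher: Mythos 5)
Your argument is exactly the one the paper intends: the paper gives no separate proof of this corollary but derives it directly from Theorem \ref{thm:lift} together with the recalled fact that $\triv(\mathbb{L}_E)$ is a torsor over the monoidal bigroupoid of $\mathcal{B}\upi_1\Gamma$-bundle gerbes \cite[Lemma 2.2.5]{waldorf8}. Your proposal simply makes the transport of the torsor structure along the equivalence explicit, which is correct and matches the paper's route.
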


\begin{corollary}
The sequence of Proposition \ref{long} is exact at $\h^1(M,\idmorph {\upi_0\Gamma})$. \end{corollary}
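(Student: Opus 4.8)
The plan is to obtain the exactness formally from Theorem \ref{thm:lift}, after translating the two cohomology sets neighbouring $\h^1(M,\idmorph{\upi_0\Gamma})$ and the two maps between them into geometry. First I would recall, via Theorem \ref{th:geo} together with the canonical equivalence $\grb{{\idmorph{\upi_0\Gamma}}}{M} \cong \idmorph{\bun{{\mathcal{B}\upi_0\Gamma}} M}$, that $\h^1(M,\idmorph{\upi_0\Gamma})$ is the set of isomorphism classes of principal $\upi_0\Gamma$-bundles $E$ over $M$. Under this identification the incoming map $\pi_{*}\maps \h^1(M,\Gamma) \to \h^1(M,\idmorph{\upi_0\Gamma})$ sends the class of a $\Gamma$-bundle gerbe $\mathcal{G}$ to the class of the $\upi_0\Gamma$-bundle $\pi_{*}(\mathcal{G})$, while the connecting homomorphism $\delta\maps \h^1(M,\idmorph{\upi_0\Gamma}) \to \h^2(M,\mathcal{B}\upi_1\Gamma)$ is, in the present geometric model, the map sending $[E]$ to the characteristic class $[\mathbb{L}_E]$ of the lifting bundle 2-gerbe of Definition \ref{def:lifting2gerbe}.

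With these translations, both half-statements of exactness become statements about the single class $[E]$. The class $[E]$ lies in $\mathrm{im}(\pi_{*})$ if and only if there is a $\Gamma$-bundle gerbe $\mathcal{G}$ with $\pi_{*}(\mathcal{G}) \cong E$, that is, if and only if the bigroupoid $\lift_\Gamma(E)$ of $\Gamma$-lifts of $E$ is non-empty. On the other side, $[E]$ lies in $\ker(\delta)$ if and only if $[\mathbb{L}_E] \eq 0$ in $\h^2(M,\mathcal{B}\upi_1\Gamma)$; since bundle 2-gerbes are classified up to isomorphism by their characteristic class \cite{stevenson2}, this holds if and only if $\mathbb{L}_E$ is trivializable, i.e.\ the bigroupoid $\triv(\mathbb{L}_E)$ is non-empty.

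These two non-emptiness conditions coincide by Theorem \ref{thm:lift}: the equivalence $\triv(\mathbb{L}_E) \cong \lift_\Gamma(E)$ forces the two bigroupoids to be simultaneously empty or non-empty, which is exactly its \quot{in particular} clause. Reading off the resulting chain, $[E]$ lies in $\ker(\delta)$ if and only if $\mathbb{L}_E$ is trivializable, if and only if $\lift_\Gamma(E)$ is non-empty, i.e.\ $E$ admits a $\Gamma$-lift, i.e.\ $[E]$ lies in $\mathrm{im}(\pi_{*})$. This is precisely exactness at $\h^1(M,\idmorph{\upi_0\Gamma})$.

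Since Theorem \ref{thm:lift} carries all the substantive content, I do not expect a genuine obstacle here; the argument is essentially bookkeeping. The one point that deserves care is the identification of the geometric connecting map $[E] \mapsto [\mathbb{L}_E]$ with the abstract connecting homomorphism $\delta$ of Proposition \ref{long}, so that \quot{$\delta([E]) \eq 0$} really does translate into \quot{$\mathbb{L}_E$ is trivializable}; this identification is exactly what Definition \ref{def:lifting2gerbe} and the discussion of the connecting homomorphism preceding it supply.
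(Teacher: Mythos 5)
Your argument is exactly the paper's: the corollary is stated there as an immediate consequence of Theorem \ref{thm:lift}, whose \quot{in particular} clause gives precisely the non-emptiness equivalence you use, combined with the classification of bundle 2-gerbes by their characteristic class and the identification of $\delta([E])$ with $[\mathbb{L}_E]$. Your write-up just makes the bookkeeping explicit that the paper leaves implicit.
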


%%%%%%%%%%%%%%%%%%%%%%%%%%%%%%%%%%%%%%%%%%%%%%%%%%%%%%%%%%%%%%%%%%%%%%%%%%%%%%%%%%%%%%%%%%%%%%%%%%%%%%%%%%%%%%%%%%%%%%%%%%%%%%%
\subsection{Proof of Theorem \ref{thm:lift}}
%%%%%%%%%%%%%%%%%%%%%%%%%%%%%%%%%%%%%%%%%%%%%%%%%%%%%%%%%%%%%%%%%%%%%%%%%%%%%%%%%%%%%%%%%%%%%%%%%%%%%%%%%%%%%%%%%%%%%%%%%%%%%%%%

\label{sec:proof}

Our strategy to prove Theorem \ref{thm:lift} is to reduce it to Theorem \ref{reduction} using descent theory. First we need the following preliminaries. 

\begin{enumerate}[(i)]
\item  
Let $\calg$ and $\calg'$  be $\pi$-oriented $\Gamma$-bundle gerbes over
 a manifold $X$. For a 1-morphism  $\mathcal{A}\maps  \calg \to \calg'$ we obtain a smooth map $h_\mathcal{A}\maps  X \to \upi_0\Gamma$  
determined by 
\begin{equation*}
\pi_*(\mathcal{A})(s(x)) \cdot h_{\mathcal{A}}(x) \eq s'(x)\text{,}
\end{equation*}
where $s$ and $s'$ are the $\pi$-orientations of $\mathcal{G}$ and $\mathcal{G}'$, respectively.
We have $h_{\mathcal{A}}\equiv 1$ if and only if $\mathcal{A}$ is $\pi$-oriented.

\item
For a  smooth map $h\maps  X \to \upi_0\Gamma$ we denote
 by  $\mathrm{Hom}^h(\calg,\calg')$ the full subgroupoid of the Hom-groupoid $\mathrm{Hom}_{\grb \Gamma X}(\calg,\calg')$ over those 1-morphisms $\mathcal{A}$ with $h_\mathcal{A} \eq h$.
\end{enumerate}

\begin{lemma}
\label{lem:trivggamma}
The $\Gamma$-bundle gerbe $i_{*}(\mathcal{G}_{\Gamma})$ is trivializable.
\end{lemma}

\begin{proof}
A trivialization is given by the principal $\Gamma$-bundle $T \df \trivlin_{i}$ over $\Gamma_0$, for $i\maps\Gamma_0 \to \Gamma_0$ the inversion of the group $\Gamma_0$, and by the bundle isomorphism
\begin{equation*}
\alxydim{@C=1.5cm}{\trivlin_1 \otimes \pi_1^{*}T \cong \trivlin_{\mathrm{pr}_2} \otimes \trivlin_{\Delta}  \ar[r]^-{\id \otimes \tau^{-1}} &   \pi_2^{*}T \otimes i_{*}(\Gamma_1)}
\end{equation*}
over $\Gamma_0^{[2]}$ which is defined using the obvious bundle isomorphism $\tau \maps i_{*}(\Gamma_1) \to \trivlin_{\Delta}$. The required compatibility with the bundle gerbe product is easy to check. \end{proof}

\begin{lemma}\label{hilfslemma}
Let $h\maps  X \to \upi_0\Gamma$ be a smooth map and $\calg$ and $\calg'$ be $\mathcal{B}\upi_1\Gamma$-bundle gerbes over $X$.
\begin{enumerate}[(i)]
\item 
We have an equivalence of groupoids 
\begin{equation*}
t\maps \mathrm{Hom}(\calg \otimes h^*\calg_\Gamma,\calg') \to \mathrm{Hom}^h(i_{or}(\calg),i_{or}(\calg'))\text{.}
\end{equation*}

\item
Let $h'\maps  X \to \upi_0\Gamma$ be another smooth map. The diagram
\begin{equation*}
\alxydim{@R=1.2cm}{\mathrm{Hom}(\mathcal{G}' \otimes h'^{*}\mathcal{G}_{\Gamma},\mathcal{G}'') \times \mathrm{Hom}(\mathcal{G} \otimes h^{*}\mathcal{G}_{\Gamma},\mathcal{G}') \ar[d]_{t \times t}\ar[r] & \mathrm{Hom}(\mathcal{G} \otimes (hh')^{*}\mathcal{G}_{\Gamma},\mathcal{G}'')\ar[d]^{t} \\ \mathrm{Hom}^{h'}(i_{or}(\mathcal{G}'),i_{or}(\mathcal{G}'')) \times \mathrm{Hom}^{h}(i_{or}(\mathcal{G}),i_{or}(\mathcal{G}'))  \ar[r]_-{\circ} & \mathrm{Hom}^{hh'}(i_{or}(\mathcal{G}),i_{or}(\mathcal{G}''))  } \end{equation*}
of functors, in which the  arrow in the first row is given by
\begin{equation*}
(\mathcal{B},\mathcal{A}) \mapsto \mathcal{B} \circ (\mathcal{A} \otimes \id) \circ (\id_{\mathcal{G}} \otimes (h \times h')^{*}\mathcal{M}_{\Gamma}^{-1})\text{,} 
\end{equation*}
is commutative up to an associative natural equivalence.

\end{enumerate}
\end{lemma}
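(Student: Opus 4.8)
The plan is to deduce both parts from the reduction theorem (Theorem \ref{reduction}) together with the canonical trivialization of $i_{*}(\mathcal{G}_\Gamma)$ furnished by Lemma \ref{lem:trivggamma}; the conceptual point is that tensoring the source by $h^{*}\mathcal{G}_\Gamma$ is exactly the device that trades a $\pi$-orientation shift by $h$ for an honest $\mathcal{B}\upi_1\Gamma$-morphism. Denote by $\mathcal{T}_\Gamma\maps i_{*}(\mathcal{G}_\Gamma) \to \trivgrb$ the trivialization of Lemma \ref{lem:trivggamma}. First I would construct $t$ on a $\mathcal{B}\upi_1\Gamma$-morphism $\mathcal{A}\maps \mathcal{G} \otimes h^{*}\mathcal{G}_\Gamma \to \mathcal{G}'$ by applying $i_{*}$ and pre-composing with the canonical isomorphism $i_{*}(\mathcal{G}) \cong i_{*}(\mathcal{G}) \otimes \trivgrb \xrightarrow{\id \otimes (h^{*}\mathcal{T}_\Gamma)^{-1}} i_{*}(\mathcal{G}) \otimes i_{*}(h^{*}\mathcal{G}_\Gamma) = i_{*}(\mathcal{G} \otimes h^{*}\mathcal{G}_\Gamma)$, using monoidality of $i_{*}$ and that it commutes with pullback; on $2$-morphisms, $t$ is $i_{*}$ post-composed with whiskering by this fixed twist, a bijection by the observation closing the proof of Theorem \ref{reduction}. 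Tracking $\pi$-orientations through this definition — the canonical orientations on $i_{or}(\mathcal{G})$ and $i_{or}(\mathcal{G}')$ are both the constant map $1$, and $\mathcal{T}_\Gamma$ shifts the orientation by $h$ after pullback (with the paper's conventions for the difference maps) — then yields $h_{t(\mathcal{A})} = h$, so that $t$ indeed lands in $\mathrm{Hom}^{h}$.

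To prove (i) I would factor $t$ as a composite of two equivalences rather than verify essential surjectivity and full faithfulness directly. By Theorem \ref{reduction} the $2$-functor $i_{or}$ is fully faithful, and since the Hom-groupoid in $\grbor{\Gamma}{X}{\pi}$ consists precisely of the $\pi$-orientation-preserving morphisms, i.e.\ those with $h_{\cdot} = 1$, this gives an equivalence $\mathrm{Hom}(\mathcal{G} \otimes h^{*}\mathcal{G}_\Gamma, \mathcal{G}') \cong \mathrm{Hom}^{1}(i_{or}(\mathcal{G} \otimes h^{*}\mathcal{G}_\Gamma), i_{or}(\mathcal{G}'))$. Pre-composition with the twist $i_{or}(\mathcal{G}) \to i_{or}(\mathcal{G} \otimes h^{*}\mathcal{G}_\Gamma)$ built from $(h^{*}\mathcal{T}_\Gamma)^{-1}$ — which by the orientation computation carries $h_{\cdot} = h$ — is then an equivalence $\mathrm{Hom}^{1}(i_{or}(\mathcal{G} \otimes h^{*}\mathcal{G}_\Gamma), i_{or}(\mathcal{G}')) \cong \mathrm{Hom}^{h}(i_{or}(\mathcal{G}), i_{or}(\mathcal{G}'))$, because the twist is an isomorphism and $h_{\cdot}$ is multiplicative under composition. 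By construction $t$ is exactly the composite of these two equivalences, hence an equivalence of groupoids.

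For part (ii) I would unwind both routes of the square on a pair $(\mathcal{B}, \mathcal{A})$ and compare the inserted trivializations. The lower route $t(\mathcal{B}) \circ t(\mathcal{A})$ inserts two separate copies $h^{*}\mathcal{T}_\Gamma$ and $h'^{*}\mathcal{T}_\Gamma$, whereas the upper route first applies $(h \times h')^{*}\mathcal{M}_\Gamma^{-1}$ and then the single trivialization $(hh')^{*}\mathcal{T}_\Gamma$. The required associative natural equivalence therefore comes from the compatibility of $\mathcal{T}_\Gamma$ with the multiplicative structure $\mathcal{M}_\Gamma$ after extension along $i$: both the external product of $h^{*}\mathcal{T}_\Gamma$ with $h'^{*}\mathcal{T}_\Gamma$, and $i_{*}((h\times h')^{*}\mathcal{M}_\Gamma)$ followed by $(hh')^{*}\mathcal{T}_\Gamma$, are trivializations of $i_{*}((hh')^{*}\mathcal{G}_\Gamma)$ and thus differ by a canonical $2$-morphism. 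Its associativity — the coherence invoked when a third map $h''$ is composed — is forced by the associator $\mu_\Gamma$ of $\mathcal{M}_\Gamma$ together with its pentagon axiom.

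The step I expect to be the main obstacle is precisely this last compatibility: establishing that the canonical trivialization $\mathcal{T}_\Gamma$ is \emph{multiplicative} with respect to $\mathcal{M}_\Gamma$. Concretely this means re-opening the explicit description of $\mathcal{T}_\Gamma$ from Lemma \ref{lem:trivggamma} — built from $\trivlin_{i}$ via the isomorphism $\tau\maps i_{*}(\Gamma_1) \to \trivlin_{\Delta}$ — and checking that the composition-law isomorphism $\mathcal{M}_\Gamma$, induced by the $2$-group multiplication, intertwines the two trivializations up to a coherent $2$-morphism. Once the conventions for the difference maps and for the inversion anchor of $\trivlin_{i}$ are fixed, the orientation bookkeeping that produced $h_{t(\mathcal{A})} = h$ is routine; it is the $2$-categorical coherence linking $\mathcal{M}_\Gamma$, $\tau$, and the composition in $\Gamma$ that carries the real weight.
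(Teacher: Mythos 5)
Your proposal follows essentially the same route as the paper: you build the twist $i_{or}(\mathcal{G})\to i_{or}(\mathcal{G}\otimes h^{*}\mathcal{G}_{\Gamma})$ from the trivialization $\mathcal{T}_{\Gamma}$ of Lemma \ref{lem:trivggamma}, factor $t$ as the full-faithfulness equivalence of Theorem \ref{reduction} followed by pre-composition with that twist (the paper packages the latter as the orientation-shift identification $\mathrm{Hom}^h(\mathcal{G},\mathcal{G}')=\mathrm{Hom}^1(\mathcal{G}_h,\mathcal{G}')$ together with $\mathrm{Hom}(\mathcal{C}_h,-)$), and derive (ii) from the compatibility of $\mathcal{T}_{\Gamma}$ with $\mathcal{M}_{\Gamma}$, which the paper dismisses as straightforward. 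The argument is correct; your elaboration of where the coherence work sits in part (ii) is a faithful expansion of what the paper leaves implicit.
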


\begin{proof}
If $\mathcal{G}$ is a $\pi$-oriented $\Gamma$-bundle gerbe  over $X$ and $h\maps  X \to \upi_0\Gamma$ is a smooth map, we obtain another $\pi$-oriented $\Gamma$-bundle gerbe denoted $\mathcal{G}_h$, which is the same $\Gamma$-bundle gerbe equipped with  the new $\pi$-orientation  $s_h \df s \cdot h$, where $s$ is the original $\pi$-orientation. We note that
\begin{equation}
\label{eq:htrans}
\mathrm{Hom}^h(\mathcal{G},\mathcal{G}') \eq \mathrm{Hom}^1(\mathcal{G}_h,\mathcal{G}')\text{.}
\end{equation}
In order to prove (i) we shall construct an orientation-preserving 1-isomorphism 
\begin{equation*}
\mathcal{C}_h\maps  i_{or}(\calg \otimes h^*\calg_\Gamma) \to i_{or}(\calg)_h\text{,}
\end{equation*}
which is by \erf{eq:htrans} the same as an 1-isomorphism of $\Gamma$-bundle gerbes with $h_{\mathcal{C}_h}\eq h$. It is indeed clear that the trivialization $\mathcal{T}_{\Gamma}$ of $i_{*}(\mathcal{G}_{\Gamma})$ 
constructed in Lemma \ref{lem:trivggamma} induces a 1-isomorphism of  $\Gamma$-bundle gerbes, and it is straightforward to check that this isomorphism satisfies the condition $h_{\mathcal{C}_h}\eq h$. Then, the equivalence $t$ is the composition of the following equivalences:
\begin{equation*}
\alxydim{@R=1.2cm}{\mathrm{Hom}(\calg \otimes h^*\calg_\Gamma,\calg') \ar[r]^-{i_{or}} & \mathrm{Hom}^1(i_{or}(\calg \otimes h^*\calg_\Gamma),i_{or}(\calg')) \ar[d]^-{\mathrm{Hom}(\mathcal{C}_h,-)} \\ & \mathrm{Hom}^1(i_{or}(\calg )_h,i_{or}(\calg')) \ar@{=}[r]^{\erf{eq:htrans}} & \mathrm{Hom}^h(i_{or}(\mathcal{G}),i_{or}(\mathcal{G}'))\text{.}}
\end{equation*}
The construction of the natural equivalence in (ii) is now straightforward.
\end{proof}

Now let $E$ be a principal $\mathcal{B}\upi_0\Gamma$-bundle over $M$ and let $\mathbb{L}_E$ be the associated lifting bundle 2-gerbe. We first want to give another description of the groupoid $\lift_\Gamma(E)$. Since the projection $\zeta\maps  E \to M$ is a surjective submersion and $\Gamma$-bundle gerbes form a stack  \cite[Theorem 3.3]{nikolaus2}, we have an equivalence 
\begin{equation*}
\grb \Gamma M \cong \Desc_\zeta(\grb \Gamma -)
\end{equation*}
between the bigroupoid $\Gamma$-bundle gerbes over $M$, and the bigroupoid of descent data with respect to $\zeta\maps  E \to M$; for the notation we refer the reader to Section 2 of \cite{nikolaus2}. An object in $\Desc_\zeta(\grb \Gamma -) $ is:
\begin{enumerate}
\item a $\Gamma$-bundle gerbe $\calg$ over $E$,
\item a 1-isomorphism $\mathcal{A}\maps  \mathrm{pr}_2^*\calg \to \mathrm{pr}_1^*\calg$ over $E^{[2]},$

\item a 2-isomorphism $\chi\maps \mathrm{pr}_{23}^*\mathcal{A} \circ \mathrm{pr}_{12}^{*}\mathcal{A} \Rightarrow \mathrm{pr}^*_{13}\mathcal{A}$ over $E^{[3]}$, and  
\item a coherence condition for $\chi$ over $E^{[4]}$.
\end{enumerate}
Analogously, there is an equivalence
\begin{equation*}
\bun{\mathcal{B}\upi_0\Gamma}{M} \cong \Desc_\zeta(\bun{\mathcal{B}\upi_0\Gamma}{-})
\end{equation*}
between the groupoid of principal $\mathcal{B}\upi_0\Gamma$-bundles and their descent data. %TN: , which we have already used in the proof of Theorem \ref{th:liftbun}.
Explicitly, the principal $\mathcal{B}\upi_0\Gamma$-bundle $E$ over $M$ corresponds to the following descent object:
\begin{enumerate}
\item the trivial $\mathcal{B}\upi_0\Gamma$-bundle $\trivlin$ over $E$,
\item the isomorphism $\mathrm{pr}_2^{*}\trivlin \to \mathrm{pr}_1^{*}\trivlin$  induced by the difference map $\delta_2\maps  E \times_M E \to \upi_0\Gamma$, and
\item a cocycle condition over $E^{[3]}$. 
\end{enumerate}

Now we are in a position to give a descent-theoretical formulation of the bigroupoid $\lift_{\Gamma} (E)$. We obtain a bigroupoid $\mathcal{D}$, in which an object consists of:
\begin{enumerate}[(a)]

\item a $\pi$-oriented  $\Gamma$-bundle gerbe $\calg$ over $E$, 

\item a 1-isomorphism $\mathcal{A}\maps  \mathrm{pr}_2^*\calg \to \mathrm{pr}_1^*\calg$ of $\Gamma$-bundle gerbes over $E^{[2]}$ such that $h_{\mathcal{A}} \eq \delta$, i.e., an object in $\mathrm{Hom}^\delta(\mathrm{pr}_2^*\calg,\mathrm{pr}_1^*\calg)$, 
\item a 2-isomorphism $\mu\maps  \mathrm{pr}_{23}^*\mathcal{A} \circ \mathrm{pr}_{12}^*\mathcal{A} \Rightarrow \mathrm{pr}^*_{13}\mathcal{A}$ over $E^{[3]}$, i.e., a morphism 
in $\mathrm{Hom}^{\delta'}\big(\mathrm{pr}_3^*\calg,\mathrm{pr}_1^*\calg\big)$ where 
$\delta'$  denotes the map $\mathrm{pr}_{23}^*\delta \cdot \mathrm{pr}_{12}^*\delta \eq \mathrm{pr}_{13}^*\delta$, and 

\item a coherence condition for $\mu$ over $E^{[4]}$.
\end{enumerate}
The 1-morphisms and 2-morphisms in the bigroupoid $\mathcal{D}$ are defined in the same evident way. 
By  \cite[Theorem 3.3]{nikolaus2} the two bigroupoids are equivalent: $\lift_{\Gamma} (E) \cong \mathcal{D}$. Functors in both directions are given by pullback and descent along $\zeta\maps E \to M$.

The next step is to translate the structure of the bigroupoid $\mathcal{D}$ using the equivalences of Theorem \ref{reduction}  and Lemma \ref{hilfslemma}. We  obtain yet another bigroupoid $\mathcal{E}$ whose objects consist of
\begin{enumerate}[(a')]

\item 
a $\mathcal{B}\upi_1\Gamma$-gerbe $\mathcal{H}$ over $E$, which corresponds to the $\pi$-oriented $\Gamma$-bundle gerbe $\mathcal{G}$ of (a) under the equivalence of Theorem \ref{reduction},

\item
a 1-isomorphism $\mathcal{B}\maps  \mathrm{pr}_2^{*}\mathcal{H} \otimes \delta^{*}\mathcal{G}_{\Gamma} \to \mathrm{pr}_1^{*}\mathcal{H}$ over $E^{[2]}$, which corresponds to the 1-isomorphism $\mathcal{A}$ of (b) under the equivalence of Lemma \ref{hilfslemma} (i),

\item
a 2-isomorphism 
\begin{equation*}
\alxydim{@R=1.2cm@C=2cm}{\mathrm{pr}_3^{*}\mathcal{H} \otimes \delta_{23}^{*}\mathcal{G}_{\Gamma} \otimes \delta_{12}^{*}\mathcal{G}_{\Gamma}  \ar[r]^-{\mathrm{pr}_{23}^{*}\mathcal{B} \otimes \id} \ar[d]_{\id \otimes \mathcal{M}_{\Gamma}} & \mathrm{pr}_2^{*}\mathcal{H} \otimes \delta_{12}^{*}\mathcal{G}_{\Gamma} \ar@{=>}[dl]|*+{\alpha} \ar[d]^{\mathrm{pr}_{12}^{*}\mathcal{B}} \\ \mathrm{pr}_3^{*}\mathcal{H} \otimes \delta_{13}^{*}\mathcal{G}_{\Gamma} \ar[r]_{\mathrm{pr}_{13}^{*}\mathcal{B}} & \mathrm{pr}_1^{*}\mathcal{H}}
\end{equation*} 
over $E^{[3]}$, which corresponds essentially to the 2-isomorphism $\mu$ of (c) under the equivalence of Lemma \ref{hilfslemma} (i), with an additional whiskering  using the diagram of Lemma \ref{hilfslemma} (ii), and 

\item
a coherence condition for $\alpha$  over $E^{[4]}$.

\end{enumerate}
Since the translations we applied are \emph{2-functorial} (Theorem \ref{reduction} and Lemma \ref{hilfslemma}), a similar description can easily be given for 1- and 2-morphisms of the bigroupoid $\mathcal{E}$. Since they are \emph{equivalences},  the bigroupoids $\mathcal{D}$ and $\mathcal{E}$ are equivalent. 

In order to finish the proof of Theorem \ref{thm:lift} it remains to notice that the bigroupoid $\mathcal{E}$ \emph{is} the bigroupoid $\triv(\mathbb{L}_E)$ of trivializations of $\mathbb{L}_{E}$, just as defined in  \cite[Section 5.1]{waldorf8}.

\setsecnumdepth{2}

%%%%%%%%%%%%%%%%%%%%%%%%%%%%%%%%%%%%%%%%%%%%%%%%%%%%%%%%%%%%%%%%%%%%%%%%%%%%%%%%%%%%%%%%%%%%%%%%%%%%%%%%%%%%%%%%%%%%%%%%%%%%%
\section{Transgression for Non-Abelian Gerbes}
%%%%%%%%%%%%%%%%%%%%%%%%%%%%%%%%%%%%%%%%%%%%%%%%%%%%%%%%%%%%%%%%%%%%%%%%%%%%%%%%%%%%%%%%%%%%%%%%%%%%%%%%%%%%%%%%%%%%%%%%%%%%%%

\label{sec:transgression}

In this section, $M$ is a smooth, finite-dimensional manifold. Transgression requires to work with connections on bundle gerbes, and we refer the reader to \cite{waldorf5,waldorf10} and references therein for a complete discussion using the same notation we are going to use here. 
Generally, if $A$ is an abelian Lie group, and $\mathcal{G}$ is a $\mathcal{B}A$-bundle gerbe with connection, there is a  monoidal 2-functor
\begin{equation*}
\mathscr{T}\maps  \grbcon {\mathcal{B}A} M \to \idmorph{\bun {\mathcal{B}A}{LM}}
\end{equation*}
called \emph{transgression}, which takes $\mathcal{B}A$-bundle gerbes with connection over $M$ to principal $\mathcal{B}A$-bundles  over the free loop space $LM \df C^{\infty}(S^1,M)$. We remark that the bundles in the image of $\mathscr{T}$ are actually equipped with more structure \cite{waldorf10}, which we do not need here. On the level of isomorphism classes, the dependence on the connections drops out, and the 2-functor $\mathscr{T}$ induces a group homomorphism
\begin{equation*}
\tau\maps  \h^1(M,\mathcal{B}A) \to \h^0(LM,\mathcal{B}A)\text{.}
\end{equation*}
The goal of the present section is to generalize this transgression homomorphism from $\mathcal{B}A$ to a general, smoothly separable  Lie 2-group with $\upi_0\Gamma$  compact and connected.

\subsection{The Loop Group of a Lie 2-Group}

We consider a smoothly separable Lie 2-group $\Gamma$ with $\upi_0\Gamma$  compact. We denote by $\mathcal{G}_{\Gamma}$ the associated   $\mathcal{B}\upi_1\Gamma$-bundle gerbe over $\upi_0\Gamma$; see Section \ref{sec:lift2gerbe}.
 In Section \ref{sec:lift2gerbe} we have equipped $\mathcal{G}_{\Gamma}$ with a multiplicative structure, consisting of a 1-isomorphism $\mathcal{M}_{\Gamma}$ over $G \times G$ and of a 2-isomorphism $\alpha$ over $G^3$. Since $\upi_0\Gamma$ is compact, $\mathcal{G}_{\Gamma}$ admits a  \emph{multiplicative connection}  \cite[Proposition 2.3.8]{waldorf5}.

We recall in more generality from \cite[Definition 1.3]{waldorf5} that a multiplicative  connection on a multiplicative $\mathcal{B}A$-bundle gerbe $(\mathcal{G},\mathcal{M},\alpha)$ over a  Lie group $G$ is a connection $\lambda$ on $\mathcal{G}$, (together denoted by $\mathcal{G}^{\lambda}$), a 2-form $\rho \in \Omega^2(G^2,\mathfrak{a})$ with values in the Lie algebra $\mathfrak{a}$ of $A$, and a connection $\eta$ on $\mathcal{M}$ (together denoted by $\mathcal{M}^{\eta}$), such that
\begin{equation}
\label{eq:multstr}
\mathcal{M}^{\eta} \maps  \mathrm{pr}_1^{*}\mathcal{G}^{\lambda} \otimes \mathrm{pr}_2^{*}\mathcal{G}^{\lambda} \to m^{*}\mathcal{G}^{\lambda} \otimes \mathcal{I}_{\rho}
\end{equation}
is a connection-preserving 1-isomorphism, and $\alpha$ is a connection-preserving 2-isomorphism. In \erf{eq:multstr}  we have denoted by $\mathcal{I}_{\rho}$ the trivial bundle gerbe equipped with the curving $\rho$. The 2-form $\rho$  is in fact determined by $\lambda$ and $\eta$ and the assumption that \erf{eq:multstr} is connection-preserving. 

\noindent
Applying the transgression 2-functor $\mathscr{T}$, we obtain:
\begin{enumerate}[(a)]

\item 
a principal $\mathcal{B}A$-bundle $\mathscr{T}_{\mathcal{G}^{\lambda}}$ over $LG$,

\item
a bundle isomorphism
\begin{equation*}
\alxydim{@C=1.5cm}{\mathrm{pr}_1^{*}\mathscr{T}_{\mathcal{G}^{\lambda}} \otimes \mathrm{pr}_2^{*}\mathscr{T}_{\mathcal{G}^{\lambda}} \ar[r]^-{\mathscr{T}_{\mathcal{M}^{\eta}}} & m^{*}\mathscr{T}_{\mathcal{G}^{\lambda}} \otimes \mathscr{T}_{\mathcal{I}_{\rho}} \ar[r]^-{\id \otimes t_{\rho}} & m^{*}\mathscr{T}_{\mathcal{G}^{\lambda}} }
\end{equation*}
over $LG^2$, using the fact that the transgression of the trivial bundle gerbe $\mathcal{I}_{\rho}$ has a canonical trivialization $t_{\rho}$,

\item
an associativity condition for the bundle isomorphism of (b) over $LG^3$, coming from the transgression of the 2-isomorphism $\alpha$.

\end{enumerate}
We recall the following result.

\begin{theorem}[{{\cite[Theorem 3.1.7]{waldorf5}}}]
\label{th:central}
If $G$ is finite-dimensional, the bundle isomorphism of (b) equips the total space of $\mathscr{T}_{\mathcal{G}^{\lambda}}$ with the structure of a Fréchet Lie group, which we denote by $L\mathcal{G}^{\lambda,\eta}$. Moreover, $L\mathcal{G}^{\lambda,\eta}$ is a central extension
\begin{equation}
\label{eq:ex}
1 \to A \to L\mathcal{G}^{\lambda,\eta} \to LG \to 1
\end{equation}
of Fréchet Lie groups.  
\end{theorem}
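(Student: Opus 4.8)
The plan is to reconstruct the Fr\'echet Lie group structure directly from the transgressed multiplicative data (a)--(c), realising the general principle (cf.\ \cite{brylinski1,mclaughlin1}) that a multiplicative gerbe over $G$ transgresses to a central extension of $LG$. Write $P \df \mathscr{T}_{\mathcal{G}^{\lambda}}$ for the total space of the principal $A$-bundle over $LG$ produced in (a). Since $G$ is finite-dimensional and compact, $LG \eq C^{\infty}(S^1,G)$ is a Fr\'echet Lie group, and $P$, being the total space of a smooth principal $A$-bundle over it, is a Fr\'echet manifold. \emph{First}, I would unpack the bundle isomorphism of (b) into a multiplication. An isomorphism $\mathrm{pr}_1^{*}P \otimes \mathrm{pr}_2^{*}P \to m^{*}P$ of principal $A$-bundles over $LG \times LG$ is precisely the datum of a smooth map $\tilde{m}\maps P \times P \to P$ lying over the multiplication $m\maps LG \times LG \to LG$ and satisfying the bi-equivariance $\tilde{m}(p \cdot a, q \cdot b) \eq \tilde{m}(p,q)\cdot(ab)$ for all $a,b \in A$; this is because the fibre product defining $\otimes$ is the quotient by the antidiagonal $A$-action, against which bundle morphisms are exactly the bi-equivariant maps. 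Smoothness of $\tilde{m}$ is inherited from the smoothness of the transgressed $1$-isomorphism \cite{waldorf10}.

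\emph{Second}, I would show that $\tilde{m}$ is \emph{strictly} associative --- this is the crucial point at which the $2$-categorical weakness of the gerbe disappears. The transgression $\mathscr{T}$ takes values in the bigroupoid $\idmorph{\bun {\mathcal{B}A}{LM}}$, whose only $2$-morphisms are identities, so the associator $2$-isomorphism $\alpha$ of the multiplicative structure is necessarily sent to an identity. Concretely, condition (c) asserts that the two composite transgressed $1$-isomorphisms $\mathrm{pr}_1^{*}P \otimes \mathrm{pr}_2^{*}P \otimes \mathrm{pr}_3^{*}P \to m_{3}^{*}P$ over $LG^3$ (with $m_3$ the triple product) coincide, which under the dictionary of the previous paragraph is exactly the equation $\tilde{m}(\tilde{m}(p,q),r) \eq \tilde{m}(p,\tilde{m}(q,r))$. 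For the unit I would run the standard torsor argument: choosing any $e_0$ in the $A$-torsor fibre over the constant loop $1 \in LG$, the map $p \mapsto \tilde{m}(e_0,p)$ covers $m(1,-) \eq \id$ and is therefore a bundle automorphism $p \mapsto p\cdot\phi(\pi(p))$ for a smooth $\phi\maps LG \to A$; associativity applied to $\tilde{m}(e_0,\tilde{m}(e_0,p))$ forces $\phi$ to be constant, and rescaling $e_0$ by that constant yields a two-sided unit $e$. Inverses then exist and are unique in each fibre, since $\tilde{m}(p,-)$ restricts to an $A$-equivariant bijection from $P_{\gamma^{-1}}$ to $P_{1}$; their smooth dependence on $p$ follows from local triviality of $P \to LG$, in which chart $\tilde{m}$ is given by a smooth $A$-valued cocycle and inversion by an explicit smooth formula.

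\emph{Third}, I would assemble the central extension. The map $a \mapsto e\cdot a$ identifies $A$ with the fibre of $P$ over $1 \in LG$; it is a homomorphism, its image is central because the bi-equivariance of $\tilde{m}$ makes the $A$-action commute with the multiplication and $A$ is abelian, and the bundle projection realises $P/A \cong LG$. Local triviality of the principal $A$-bundle $P \to LG$ furnishes the local smooth sections that make this a genuine extension of Fr\'echet Lie groups, yielding the exact sequence \erf{eq:ex}. The main obstacle, and the precise point where the hypothesis that $G$ be finite-dimensional is indispensable, is the analytic input rather than the algebra: finite-dimensionality is what guarantees that $LG$ is a well-behaved Fr\'echet Lie group and that the transgression $2$-functor $\mathscr{T}$ genuinely produces \emph{smooth} bundles and bundle morphisms, the underlying fibrewise parallel-transport construction requiring the connection data on $G$ to integrate smoothly along families of loops. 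Verifying this smoothness of $\tilde{m}$ and of the inversion in the Fr\'echet category --- as opposed to the group axioms, which are formal consequences of (b) and (c) --- is where the real work lies.
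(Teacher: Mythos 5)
The paper does not prove this statement: it is imported verbatim as \cite[Theorem 3.1.7]{waldorf5} (``We recall the following result''), so there is no internal proof to compare against. Your reconstruction is correct and follows the same strategy as the cited source: identify the bundle isomorphism of (b) with a bi-equivariant smooth map $\tilde{m}\maps P\times P\to P$ covering $m$, observe that strict associativity holds because the transgression 2-functor lands in $\idmorph{\bun{\mathcal{B}A}{LM}}$ and therefore collapses the associator $\alpha$ to an identity, and then run the torsor arguments for unit, inverses and centrality, with smoothness supplied by local triviality of $P\to LG$ and the Fr\'echet Lie group structure on $LG\eq C^{\infty}(S^1,G)$ for finite-dimensional $G$. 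Two small points you gloss over: the rescaled $e$ is a priori only a \emph{left} unit, and you need the one-line associativity computation $\tilde{m}(\tilde{m}(p,e),e)\eq\tilde{m}(p,\tilde{m}(e,e))\eq\tilde{m}(p,e)$ to promote it to a two-sided unit; and the identification of condition (c) with strict associativity of $\tilde{m}$ tacitly uses that the canonical trivializations $t_{\rho}$ entering (b) are themselves compatible over $LG^3$, which is part of what the paper packages into item (c). Neither affects the correctness of the argument.
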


\begin{comment}
Actually, \cite[Theorem 3.1.7]{waldorf5} is stated only for $A\eq\ueins$ and $\mathcal{G}$ finite-dimensional. The generalization to general abelian Lie groups is straightforward. 
If the bundle gerbe $\mathcal{G}$ is infinite-dimensional, it is -- since its base $G$ is finite-dimensional --  multiplicatively isomorphic to a finite-dimensional multiplicative bundle gerbe, inducing isomorphisms between $\mathscr{T}_{\mathcal{G}}$ and the Fréchet Lie group. 
\end{comment}

We apply Theorem \ref{th:central} to the multiplicative $\mathcal{B}\upi_1{\Gamma}$-bundle gerbe $\mathcal{G}_{\Gamma}$, and some multiplicative connection $(\lambda,\eta)$ on it (note that $G\eq \upi_0\Gamma$ is by assumption compact, in particular finite-dimensional). Since we understand the connection as an auxiliary structure, we have to control different choices. For this purpose we need the following technical lemma. 

\begin{lemma}
\label{lem:1}
Let $(\mathcal{G},\mathcal{M},\alpha)$ be a multiplicative $A$-bundle gerbe over $G$, and let $(\lambda,\eta)$ and $(\lambda',\eta')$ be  multiplicative connections, determining 2-forms $\rho,\rho'\in\Omega^2(G \times G,\mathfrak{a})$. Then, there is a  2-form $\beta\in \Omega^2(G,\mathfrak{a})$ satisfying $\rho \eq \rho' + \Delta\beta$, and a connection $\varepsilon$ on the identity 1-isomorphism $\id\maps  \mathcal{G} \to \mathcal{G}$ such that
\begin{equation*}
\id^{\varepsilon}\maps  \mathcal{G}^{\lambda} \to \mathcal{G}^{\lambda'} \otimes \mathcal{I}_{\beta}
\end{equation*}
is connection-preserving, and the canonical 2-isomorphism
\begin{equation*}
\alxydim{@C=1.5cm@R=1.2cm}{\mathrm{pr}_1^{*}\mathcal{G}^{\lambda} \otimes \mathrm{pr}_2^{*}\mathcal{G^{\lambda}} \ar[r]^-{\mathcal{M}^{\eta}} \ar[d]_{\mathrm{pr}_1^{*}\id_{\varepsilon} \otimes \mathrm{pr}_2^{*}\id_{\varepsilon}} & m^{*}\mathcal{G}^{\lambda} \otimes \mathcal{I}_{\rho} \ar@{=>}[ddl] \ar[d]^{m^{*}\id_{\varepsilon} \otimes \id_0} \\ \mathrm{pr}_1^{*}(\mathcal{G}^{\lambda'} \otimes \mathcal{I}_{\beta}) \otimes \mathrm{pr}_2^{*}(\mathcal{G}^{\lambda'} \otimes \mathcal{I}_{\beta}) \ar@{=}[d] & m^{*}(\mathcal{G}^{\lambda'} \otimes \mathcal{I}_{\beta}) \otimes \mathcal{I}_{\rho'} \ar@{=}[d] \\\mathrm{pr}_1^{*}\mathcal{G}^{\lambda'} \otimes \mathrm{pr}_2^{*}\mathcal{G}^{\lambda'} \otimes \mathcal{I}_{\mathrm{pr}_1^{*}\beta + \mathrm{pr}_2^{*}\beta} \ar[r]_-{\mathcal{M}^{\eta'} \otimes \id_0} & m^{*}\mathcal{G}^{\lambda'} \otimes \mathcal{I}_{\rho'}\otimes \mathcal{I}_{\mathrm{pr}_1^{*}\beta + \mathrm{pr}_2^{*}\beta} }
\end{equation*} 
is connection-preserving. If $(\beta,\varepsilon)$ and $(\beta',\varepsilon')$ are two pairs of such forms, then there exists a 1-form $\xi\in\Omega^1(G,\mathfrak{a})$ with $\Delta\xi=0$, such that $\varepsilon' = \varepsilon + \xi$ and $\beta' = \beta + \mathrm{d}\xi$. 
\end{lemma}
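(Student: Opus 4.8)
The plan is to reduce the statement to a comparison of the two connections on the \emph{single} bundle gerbe $\mathcal{G}$ and then to transport the multiplicative data across. First I would invoke the standard classification of connections on a fixed bundle gerbe (as developed in \cite{waldorf5} and the references therein): since $\lambda$ and $\lambda'$ are connections on one and the same $\mathcal{G}$, there is a connection $\varepsilon$ on the identity $1$-isomorphism together with a $2$-form $\beta\in\Omega^2(G,\mathfrak{a})$ such that $\id^{\varepsilon}\maps \mathcal{G}^{\lambda}\to\mathcal{G}^{\lambda'}\otimes\mathcal{I}_{\beta}$ is connection-preserving. Connection-preservation forces the curvatures to agree, i.e. $\mathrm{d}\beta = \mathrm{curv}(\mathcal{G}^{\lambda})-\mathrm{curv}(\mathcal{G}^{\lambda'})$; such a $\beta$ exists because the two curvatures represent the same real characteristic (Dixmier--Douady) class in $\mathrm{H}^3(G,\mathfrak{a})$, hence differ by an exact form. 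At this stage $\beta$ is pinned down only up to a closed $2$-form, with $\varepsilon$ changing by the corresponding primitive.

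Next I would form the square appearing in the statement and read off its content. Forgetting connections, the underlying $1$-isomorphism $\mathcal{M}$ of both $\mathcal{M}^{\eta}$ and $\mathcal{M}^{\eta'}$ is the same, so the coherence of the bicategory of bundle gerbes supplies a canonical $2$-isomorphism filling the square; the only issue is whether it is connection-preserving, which is a comparison of curvings on the trivial tensor factors. The upper-right path carries $\mathcal{I}_{\rho}\otimes\mathcal{I}_{m^{*}\beta}=\mathcal{I}_{\rho + m^{*}\beta}$, while the lower path carries $\mathcal{I}_{\rho'}\otimes\mathcal{I}_{\mathrm{pr}_1^{*}\beta+\mathrm{pr}_2^{*}\beta}$. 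Hence the square is connection-preserving precisely when $\rho + m^{*}\beta = \rho' + \mathrm{pr}_1^{*}\beta + \mathrm{pr}_2^{*}\beta$, that is, when $\rho = \rho' + \Delta\beta$ for the simplicial differential $\Delta\beta = \mathrm{pr}_1^{*}\beta - m^{*}\beta + \mathrm{pr}_2^{*}\beta$. For the $\beta$ produced above this need not hold on the nose: writing $\eta$ and $\eta'$ as two connections on the common $\mathcal{M}$, their difference (combined with the whiskering by $\id^{\varepsilon}$) is a $1$-form $\nu\in\Omega^1(G\times G,\mathfrak{a})$, and comparing curvings shows $\rho - \rho' - \Delta\beta = \mathrm{d}\nu$, so the discrepancy is $\mathrm{d}$-exact.

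The crux is to remove this exact discrepancy using the remaining freedom. Replacing $\varepsilon$ by $\varepsilon+\xi$ with $\xi\in\Omega^1(G,\mathfrak{a})$ keeps $\id^{\varepsilon+\xi}$ connection-preserving after shifting $\beta$ by $\mathrm{d}\xi$, and shifts the comparison $1$-form by $\nu\mapsto\nu+\Delta\xi$; replacing $\beta$ by $\beta+\zeta$ with $\zeta$ closed shifts the discrepancy by $-\Delta\zeta$. Using the identities $\mathrm{d}\rho = \Delta\,\mathrm{curv}(\mathcal{G}^{\lambda})$ and $\mathrm{d}\rho' = \Delta\,\mathrm{curv}(\mathcal{G}^{\lambda'})$, together with the associativity constraints on $\rho,\rho'$ coming from the connection-preserving $2$-isomorphism $\alpha$, one checks that $\rho - \rho' - \Delta\beta$ is both $\mathrm{d}$- and $\Delta$-closed, so that achieving $\rho = \rho' + \Delta\beta$ becomes a coboundary/rigidification statement in the Bott--Shulman--Stasheff double complex $\big(\Omega^{q}(G^{p}),\mathrm{d},\Delta\big)$ of the bar construction of $G$. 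I expect this to be the main obstacle: both multiplicative connections represent the same real multiplicative class, so their difference is a total coboundary $(\beta,\nu,\dots)$, and the work is to gauge away the $\nu$-component so that the primitive takes the pure form $(\beta,0)$ — this is where one leans on the classification of multiplicative connections in \cite{waldorf5} (and, in the cases of interest where $G=\upi_0\Gamma$ is compact, on the structure of $\mathrm{H}^{\bullet}(BG)$). Finally, for uniqueness I would compare two admissible pairs $(\beta,\varepsilon)$ and $(\beta',\varepsilon')$: composing the connection-preserving isomorphisms $\id^{\varepsilon}$ and $\id^{\varepsilon'}$ exhibits a flat comparison and hence a $1$-form $\xi$ with $\varepsilon'=\varepsilon+\xi$ and $\beta'=\beta+\mathrm{d}\xi$; imposing $\rho = \rho' + \Delta\beta = \rho' + \Delta\beta'$ and compatibility of both squares then forces $\mathrm{pr}_1^{*}\xi + \mathrm{pr}_2^{*}\xi - m^{*}\xi = \Delta\xi = 0$, which is exactly the asserted relation.
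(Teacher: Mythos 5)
Your proposal is correct and follows essentially the same route as the paper: first use the classification of connections on the underlying bundle gerbe to produce a preliminary pair $(\beta,\varepsilon)$, then observe that the residual discrepancy is measured by a $1$-form on $G\times G$ (the descent of the difference of $\eta$ and $\eta'$ corrected by the whiskering with $\id^{\varepsilon}$) which is $\Delta$-closed by the connection-preservation of the associator, and finally absorb it into the $\Omega^1(G,\mathfrak{a})$-ambiguity of $(\beta,\varepsilon)$; the uniqueness argument is identical. The only point where the paper is more explicit than your sketch is the cohomological input you flag as ``the main obstacle'': it is precisely the vanishing of degree-two cohomology of the complex $\Omega^1(G)\to\Omega^1(G\times G)\to\Omega^1(G\times G\times G)\to\cdots$ with differential $\Delta$ for compact $G$ (quoted from the proof of Proposition 2.3.8 of \cite{waldorf5}), which supplies $\kappa\in\Omega^1(G,\mathfrak{a})$ with $\Delta\kappa$ equal to your $\nu$.
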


\begin{proof}
We refer the reader to \cite[Section 5.2]{waldorf8} for a general overview about connections on bundle gerbes. 
The connection on  $\mathcal{G}$ consists of two parts: a 2-form $B \in \Omega^2(Y,\mathfrak{a})$, where $\pi\maps Y \to G$ is the surjective submersion of $\mathcal{G}$, and a connection $\omega$ on the principal $\mathcal{B}A$-bundle $P$ over $Y^{[2]}$. The 1-isomorphism $\mathcal{M}$ consists of a principal $\mathcal{B}A$-bundle $Q$ over the fibre product $Z \df (Y \times Y) \lli{m \circ (\pi \times \pi)}\times_{\pi} Y$, and a connection on $\mathcal{M}$ is a connection $\eta$ on $Q$. On $Z$ we have the three projections $\mathrm{pr}_i\maps Z \to Y$, and the surjective submersion $\zeta \df (\pi \times \pi) \circ (\mathrm{pr}_1,\mathrm{pr}_2)\maps  Z \to G\times G$. One of the various conditions that relate all these differential forms is
\begin{equation}
\label{eq:detrho}
\mathrm{pr}_1^{*}B + \mathrm{pr}_2^{*}B
 \eq \mathrm{pr}_3^{*}B + \zeta^{*}\rho + \mathrm{d}\eta\text{;}
 \end{equation}
in particular, $\rho$ is uniquely determined by $B$ and $\eta$. We claim that the remaining conditions imply the following statement: for $(B,\omega,\eta)$ and $(B',\omega',\eta')$ two choices of a multiplicative connection on $(\mathcal{G},\mathcal{M},\alpha)$, there exist  a 2-form $\beta \in \Omega^2(G)$ and a  1-form $\varepsilon \in \Omega^1(Y)$ such that
\begin{equation}
\label{eq:conntrans}
B' \eq B - \pi^{*}\beta + \mathrm{d}\varepsilon
\quomma
\omega' \eq \omega + \mathrm{pr}_2^{*}\varepsilon - \mathrm{pr}_1^{*}\varepsilon
\quand
\eta' \eq \eta + \mathrm{pr}_1^{*}\varepsilon + \mathrm{pr}_2^{*}\varepsilon - \mathrm{pr}_3^{*}\varepsilon\text{.}
\end{equation}
Indeed, by Lemma \cite[Lemma 3.3.5]{waldorf8} the two connections $(B,\omega)$ and $(B',\omega')$ on $\mathcal{G}$ differ by a pair $(\beta,\varepsilon)$ in the way stated above. A priori, $(\beta,\varepsilon)$ are defined up to $\alpha \in \Omega^1(G,\mathfrak{a})$ acting by $(\beta + \mathrm{d}\alpha,\varepsilon + \pi^{*}\alpha)$. 
The connections $\eta$ and $\eta'$ differ a priori by a uniquely defined 1-form $\delta \in \Omega^1(Z,\mathfrak{a})$. The condition that the bundle isomorphism of $\mathcal{M}$ preserves the new as well as the old connections imposes the condition
\begin{equation*}
\zeta_1^{*}(\delta + \mathrm{pr}_{1}^{*}\varepsilon + \mathrm{pr}_2^{*}\varepsilon - \mathrm{pr}_3^{*}\varepsilon) \eq \zeta_2^{*}(\delta + \mathrm{pr}_{1}^{*}\varepsilon + \mathrm{pr}_2^{*}\varepsilon - \mathrm{pr}_3^{*}\varepsilon)
\end{equation*}
over $Z \times_{G \times G} Z$, where $\zeta_1,\zeta_2$ are the two projections. Thus, there exists a unique 1-form $\gamma \in \Omega^1(G \times G,\mathfrak{a})$ such that
\begin{equation*}
\zeta^{*}\gamma\eq\delta + \mathrm{pr}_{1}^{*}\varepsilon + \mathrm{pr}_2^{*}\varepsilon - \mathrm{pr}_3^{*}\varepsilon\text{.} 
\end{equation*}
The condition that the 2-isomorphism $\alpha$ respect both the old and the new connections imposes the condition that $\Delta\gamma \eq 0$, where $\Delta$ is the alternating sum over the pullbacks along the face maps in the simplicial manifold $G^{\bullet}$, forming the complex
\begin{equation*}
\Omega^1(G) \to \Omega^1(G \times G) \to \Omega^1(G \times G \times G) \to \hdots
\end{equation*}
This complex has no cohomology in degree two since $G$ is compact (see the proof of \cite[Proposition 2.3.8]{waldorf5}). Thus, there exists  a 1-form $\kappa \in \Omega^1(G)$ such that $\Delta\kappa \eq \gamma$. Now, the new pair $(\beta + \mathrm{d}\kappa,\varepsilon + \pi^{*}\kappa)$ satisfies \erf{eq:conntrans}. One can now check that the corresponding 2-forms $\rho$ and $\rho'$ determined by \erf{eq:detrho} satisfy the claimed identity.

Now suppose $(\beta,\varepsilon)$ and $(\beta',\varepsilon')$ are two pairs of  forms satisfying\erf{eq:conntrans}. The second equation implies that $\varepsilon'-\varepsilon$ descends to a 1-form $\xi\in\Omega^1(G,\mathfrak{a})$, and the first equation shows that $\beta'-\beta=\mathrm{d}\xi$. The third equation implies $\Delta\xi=0$.
\end{proof}

Using Lemma \ref{lem:1} we get the following.

\begin{proposition}
\label{prop:loopgroupindep}
The central extension $L\mathcal{G}^{\lambda,\eta}$ of Theorem \ref{th:central} is independent of the choice of the multiplicative connection $(\lambda,\eta)$ in the following sense: two multiplicative connections $(\lambda,\eta)$ and $(\lambda',\eta')$ determine a homotopy class of equivalences $L\mathcal{G}^{\lambda,\eta} \cong L\mathcal{G}^{\lambda',\eta'}$. 
\end{proposition}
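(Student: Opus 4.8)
The plan is to convert the comparison data furnished by Lemma \ref{lem:1} into an isomorphism of the two central extensions by applying the transgression 2-functor $\mathscr{T}$, and then to use the uniqueness clause of that lemma to pin the isomorphism down up to homotopy. First I would transgress the connection-preserving 1-isomorphism $\id^{\varepsilon}\maps \mathcal{G}^{\lambda} \to \mathcal{G}^{\lambda'} \otimes \mathcal{I}_{\beta}$. Since $\mathscr{T}$ is monoidal, this produces an isomorphism of principal $A$-bundles over $LG$
\[
\mathscr{T}_{\id^{\varepsilon}}\maps \mathscr{T}_{\mathcal{G}^{\lambda}} \to \mathscr{T}_{\mathcal{G}^{\lambda'}} \otimes \mathscr{T}_{\mathcal{I}_{\beta}}\text{,}
\]
and composing with the canonical trivialization $t_{\beta}$ of the transgression of the trivial gerbe $\mathcal{I}_{\beta}$ (as in point (b) above) I obtain $\Phi \df (\id \otimes t_{\beta}) \circ \mathscr{T}_{\id^{\varepsilon}}\maps \mathscr{T}_{\mathcal{G}^{\lambda}} \to \mathscr{T}_{\mathcal{G}^{\lambda'}}$, an isomorphism of principal $A$-bundles over $LG$ covering the identity.

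Next I would show that $\Phi$ is a homomorphism of central extensions, i.e.\ that it intertwines the group multiplications $\mu \df (\id \otimes t_{\rho}) \circ \mathscr{T}_{\mathcal{M}^{\eta}}$ and $\mu' \df (\id \otimes t_{\rho'}) \circ \mathscr{T}_{\mathcal{M}^{\eta'}}$ of point (b). This is where the connection-preserving 2-isomorphism of Lemma \ref{lem:1} is used: because the target bigroupoid $\idmorph{\bun{\mathcal{B}A}{LG}}$ of $\mathscr{T}$ has only identity 2-morphisms, any connection-preserving 2-isomorphism transgresses to an \emph{equality} of the corresponding bundle maps. Applied to the square of Lemma \ref{lem:1}, whose horizontal edges are $\mathcal{M}^{\eta}$ and $\mathcal{M}^{\eta'}$ and whose vertical edges are whiskerings by $\id^{\varepsilon}$, and using the relation $\rho \eq \rho' + \Delta\beta$ to match the trivialization factors $t_{\rho}, t_{\rho'}, t_{\beta}$, this equality becomes exactly $m^{*}\Phi \circ \mu \eq \mu' \circ (\mathrm{pr}_1^{*}\Phi \otimes \mathrm{pr}_2^{*}\Phi)$ over $LG^{2}$, which is the multiplicativity of $\Phi$. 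A group homomorphism that is simultaneously a principal $A$-bundle map over $\id_{LG}$ sends the identity to the identity and is $A$-equivariant, hence restricts to $\id_{A}$ on the fibre over the constant loop; so $\Phi$ is an isomorphism of the central extensions \erf{eq:ex}.

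Finally I would eliminate the dependence on the auxiliary pair $(\beta,\varepsilon)$. By the last part of Lemma \ref{lem:1}, any other valid pair has the form $(\beta + \mathrm{d}\xi, \varepsilon + \xi)$ with $\xi \in \Omega^1(G,\mathfrak{a})$ satisfying $\Delta\xi \eq 0$. Changing $\varepsilon$ to $\varepsilon + \xi$ multiplies $\Phi$ by the smooth function $W_{\xi}\maps LG \to A$ obtained from $\xi$ by transgression, $W_{\xi}(\gamma) \eq \exp\!\big(\int_{S^1}\gamma^{*}\xi\big)$; the condition $\Delta\xi \eq 0$ says $m^{*}\xi \eq \mathrm{pr}_1^{*}\xi + \mathrm{pr}_2^{*}\xi$, which integrates to $W_{\xi}(\gamma_1\gamma_2) \eq W_{\xi}(\gamma_1)W_{\xi}(\gamma_2)$, so $W_{\xi}$ is a homomorphism and (as $A$ is central) $\Phi \cdot W_{\xi}$ is again an isomorphism of central extensions. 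Rescaling to $\xi_t \df t\xi$, which still satisfies $\Delta\xi_t \eq 0$ and gives $W_{\xi_t}$ equal to $1$ on the constant loop, yields a smooth path $t \mapsto \Phi \cdot W_{\xi_t}$ of such isomorphisms from $\Phi$ to $\Phi \cdot W_{\xi}$, i.e.\ a homotopy. Hence all choices give homotopic equivalences, and the two multiplicative connections determine a well-defined homotopy class.

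The step I expect to be the main obstacle is the bookkeeping in the middle paragraph: tracking the several canonical trivializations $t_{\rho}, t_{\rho'}, t_{\beta}$ of transgressed trivial gerbes and verifying that, under $\rho \eq \rho' + \Delta\beta$, the transgressed 2-isomorphism of Lemma \ref{lem:1} collapses \emph{exactly} to the multiplicativity identity for $\Phi$, rather than to that identity twisted by a residual simplicial coboundary. A secondary technical point is the precise justification that changing the connection $\varepsilon$ on the identity 1-isomorphism by a 1-form $\xi$ multiplies the transgressed map by $W_{\xi}$, which I would extract from the explicit transgression formulas of \cite{waldorf5,waldorf10}.
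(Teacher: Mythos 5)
Your proposal is correct and follows essentially the same route as the paper's proof: transgress the connection-preserving 1-isomorphism $\id^{\varepsilon}$ from Lemma \ref{lem:1}, compose with the canonical trivialization $t_{\beta}$ to get the equivalence of central extensions, and then use the last clause of Lemma \ref{lem:1} to show that two choices of $(\beta,\varepsilon)$ change the map by the multiplicative function $\tau\mapsto\exp(2\pi\im\int_\tau\xi)$, which is null-homotopic by scaling $\xi$ linearly. The paper's argument is exactly this, stated slightly more tersely.
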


\begin{proof}
Let $(\lambda',\eta')$ be another multiplicative connection on $(\mathcal{G},\mathcal{M},\alpha)$. By Lemma \ref{lem:1} there exists a connection $\varepsilon$ on the identity $\id\maps  \mathcal{G} \to \mathcal{G}$ and a 2-form $\beta \in \Omega^2(G,\mathfrak{a})$ such that
\begin{equation*}
\id_{\varepsilon}\maps  \mathcal{G}^{\lambda} \to \mathcal{G}^{\lambda'} \otimes \mathcal{I}_{\beta}
\end{equation*}
is a connection-preserving 1-isomorphism. We transgress to the loop space, and use the canonical trivializations $t_{\beta}$ of the transgression of the  bundle gerbe $\mathcal{I}_{\beta}$. We get an isomorphism
\begin{equation}
\label{eq:transideps}
\varphi_{\varepsilon} \df (\id \otimes t_{\beta}) \circ  \mathscr{T}_{\id_{\varepsilon}}: \mathscr{T}_{\mathcal{G}^{\lambda}} \to \mathscr{T}_{\mathcal{G}^{\lambda'}}
\end{equation} 
of principal $\upi_1\Gamma$-bundles over $LM$, and a commutative diagram
\begin{equation*}
\alxydim{@R=1.2cm@C=1.5cm}{ \mathrm{pr}_1^{*}\mathscr{T}_{\mathcal{G}^{\lambda}} \otimes \mathrm{pr}_2^{*}\mathscr{T}_{\mathcal{G}^{\lambda}} \ar[d]_{\mathrm{pr}_1^{*}\varphi_{\varepsilon} \otimes \mathrm{pr}_2^{*}\varphi_{\varepsilon}} \ar[r]^-{\mathscr{T}_{\mathcal{M}^{\eta}}} &  m^{*}\mathscr{T}_{\mathcal{G}^{\lambda}} \ar[d]^{m^{*}\varphi_{\varepsilon}} \\  \mathrm{pr}_1^{*}\mathscr{T}_{\mathcal{G}^{\lambda'}} \otimes \mathrm{pr}_2^{*}\mathscr{T}_{\mathcal{G}^{\lambda'}} \ar[r]_-{\mathscr{T}_{\mathcal{M}^{\eta'}}} & m^{*}\mathscr{T}_{\mathcal{G}^{\lambda'}}\text{,}}
\end{equation*} 
where we have used that the transgression of the identity $\id_0\maps  \mathcal{I}_{\rho} \to \mathcal{I}_{\rho}$ between trivial bundle gerbes obviously exchanges the canonical trivializations. Summarizing,  $\varphi_{\epsilon}$ is an equivalence of central extensions.

In order to see how the maps $\varphi_{\varepsilon}$ and $\varphi_{\varepsilon'}$ differ, let $\mathcal{T}$ be an element in the fibre of $\mathscr{T}_{\mathcal{G}^{\lambda}}$ over a loop $\tau$, i.e. a trivialization $\mathcal{T}: \tau^{*}\mathcal{G}^{\lambda} \to \mathcal{I}_0$. Then, $\varphi_{\varepsilon}(\mathcal{T}) =  (\mathcal{T} \circ \id_{\varepsilon}^{-1}) \otimes \tau^{*}\id$. Since $\id_{\varepsilon'}^{-1} = \id_{\varepsilon}^{-1} \otimes \trivlin_{\xi}$, we have 
\begin{equation*}
\varphi_{\varepsilon'}|_{\tau} = \varphi_{\varepsilon'}|_{\tau} \cdot \exp \left (2 \pi \im \int_{\tau} \xi \right )\text{.}
\end{equation*}
This shows that the equivalence $\varphi_{\varepsilon}$ depends on the choice of $\varepsilon$, and that the equivalences $\varphi_{\varepsilon}$ and $\varphi_{\varepsilon'}$ are related by a smooth map $\tau \mapsto \exp \left (2 \pi \im \int_{\tau} \xi \right )$  which is multiplicative $(\Delta\xi=0)$ and null-homotopic (the $\Delta$-closed forms $\Omega^1(G,\mathfrak{a})$ form a vector space).
\end{proof}

According to Proposition \ref{prop:loopgroupindep} we may write $L\mathcal{G}$ for the central extension of Theorem \ref{th:central} without mentioning the choice of the multiplicative connection on $\mathcal{G}$.

\begin{definition}
\label{def:loopgroup}
Let $\Gamma$ be a smoothly separable Lie 2-group with $\upi_0\Gamma$ compact. The Fréchet Lie group $L\mathcal{G}_{\Gamma}$ is denoted by $L\Gamma$ and called the \emph{loop group} of  $\Gamma$. 
\end{definition}

\begin{example}
For the following two \quot{extremal} examples one can see by just looking at the sequence \erf{eq:ex} what the loop group is.
\begin{enumerate}[(i)]

\item 
If $\Gamma \eq \mathcal{B}A$ for an abelian Lie group $A$, then $L\Gamma \eq A$.

\item
If $\Gamma \eq \idmorph G$ for a Lie group $G$, then $L\Gamma \eq LG$, i.e., $L\Gamma$ is the ordinary loop group. 
\end{enumerate}
\end{example} 

A Lie 2-group homomorphism between Lie 2-groups $\Gamma$ and $\Omega$ is a smooth anafunctor $\Lambda\maps  \Gamma \to \Omega$ which is compatible with the multiplication functors up to a coherent transformation; see \cite[Eq. 2.4.4]{NW11}. We recall from  \cite[Appendix B]{nikolausb} that a Lie 2-group homomorphism induces smooth maps 
\begin{equation*}
\upi_0\Lambda\maps  \upi_0\Gamma \to \upi_0\Omega
\quand
\upi_1\Lambda\maps  \upi_1\Gamma \to \upi_1\Omega\text{.}
\end{equation*}
In order to study the relation between the loop groups $L\Gamma$ and $L\Omega$ we need the following two technical lemmata for preparation.

\begin{lemma}
\label{lem:multconnind}
Let $\mathcal{G}$ and $\mathcal{H}$ be multiplicative bundle gerbes over $G$, let $\mathcal{A}\maps \mathcal{G} \to \mathcal{H}$ be a multiplicative 1-isomorphism, and let $(\lambda,\eta)$ be a multiplicative connection on $\mathcal{G}$. Then, there exist a multiplicative connection $(\lambda',\eta')$ on $\mathcal{H}$, and a connection $\varepsilon$ on $\mathcal{A}$, such that $\mathcal{A}^{\varepsilon}\maps  \mathcal{G}^{\lambda} \to \mathcal{H}^{\lambda'}$ is connection-preserving.
\end{lemma}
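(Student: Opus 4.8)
The plan is to transport the multiplicative connection from $\mathcal{G}$ to $\mathcal{H}$ along $\mathcal{A}$, in three stages that mirror the three pieces of data of a multiplicative structure. Write $(\mathcal{G},\mathcal{M}_{\mathcal{G}},\alpha_{\mathcal{G}})$ and $(\mathcal{H},\mathcal{M}_{\mathcal{H}},\alpha_{\mathcal{H}})$ for the two multiplicative bundle gerbes, and recall that the multiplicative $1$-isomorphism $\mathcal{A}$ comes equipped with a $2$-isomorphism $\beta\maps m^{*}\mathcal{A} \circ \mathcal{M}_{\mathcal{G}} \Rightarrow \mathcal{M}_{\mathcal{H}} \circ (\mathrm{pr}_1^{*}\mathcal{A} \otimes \mathrm{pr}_2^{*}\mathcal{A})$ over $G \times G$, subject to a coherence condition over $G^{3}$ relating $\beta$, $\alpha_{\mathcal{G}}$ and $\alpha_{\mathcal{H}}$; see \cite{waldorf5}.

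First I would produce $\lambda'$ and $\varepsilon$ on the level of bundle gerbes, ignoring the multiplicative structure. Choosing any connection $\mu_0$ on $\mathcal{H}$ (connections on bundle gerbes always exist, cf.\ \cite{waldorf8}) and any connection $\varepsilon$ on the $1$-isomorphism $\mathcal{A}$, the isomorphism $\mathcal{A}^{\varepsilon}\maps \mathcal{G}^{\lambda} \to \mathcal{H}^{\mu_0}$ fails to be connection-preserving only by a $2$-form $\omega \in \Omega^{2}(G,\mathfrak{a})$ on the base, so that $\mathcal{A}^{\varepsilon}\maps \mathcal{G}^{\lambda} \to \mathcal{H}^{\mu_0} \otimes \mathcal{I}_{\omega}$ is connection-preserving. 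Absorbing $\omega$ into the curving of $\mathcal{H}$ defines a connection $\lambda'$ on $\mathcal{H}$ with $\mathcal{H}^{\lambda'} = \mathcal{H}^{\mu_0} \otimes \mathcal{I}_{\omega}$, and then $\mathcal{A}^{\varepsilon}\maps \mathcal{G}^{\lambda} \to \mathcal{H}^{\lambda'}$ is connection-preserving, as required.

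Next I would define the connection $\eta'$ on $\mathcal{M}_{\mathcal{H}}$ using $\beta$. The composite $m^{*}\mathcal{A}^{\varepsilon} \circ \mathcal{M}_{\mathcal{G}}^{\eta}$ is connection-preserving from $\mathrm{pr}_1^{*}\mathcal{G}^{\lambda} \otimes \mathrm{pr}_2^{*}\mathcal{G}^{\lambda}$ to $m^{*}\mathcal{H}^{\lambda'} \otimes \mathcal{I}_{\rho}$, using that $\mathcal{M}_{\mathcal{G}}^{\eta}$ shifts by $\rho$ (the $2$-form determined by $\eta$ and the curving, as in \erf{eq:detrho}) and that $m^{*}\mathcal{A}^{\varepsilon}$ is connection-preserving, while the $1$-isomorphism $\mathrm{pr}_1^{*}\mathcal{A}^{\varepsilon} \otimes \mathrm{pr}_2^{*}\mathcal{A}^{\varepsilon}$ is connection-preserving and invertible. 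Since a $2$-isomorphism between $1$-isomorphisms of bundle gerbes transports a connection from one side to the other uniquely, and the $\mathcal{A}$-factors carry the fixed invertible connection $\varepsilon$, there is a unique connection $\eta'$ on $\mathcal{M}_{\mathcal{H}}$ for which $\beta$ becomes connection-preserving; comparing the $\mathcal{I}$-factors on the two sides forces $\mathcal{M}_{\mathcal{H}}^{\eta'}\maps \mathrm{pr}_1^{*}\mathcal{H}^{\lambda'} \otimes \mathrm{pr}_2^{*}\mathcal{H}^{\lambda'} \to m^{*}\mathcal{H}^{\lambda'} \otimes \mathcal{I}_{\rho}$, i.e.\ $\rho' = \rho$.

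Finally I would check that $\alpha_{\mathcal{H}}$ is connection-preserving, which is the remaining condition for $(\lambda',\eta')$ to be a multiplicative connection. This is the main point of the argument, and it is a diagram chase over $G^{3}$: the coherence condition of the multiplicative $1$-isomorphism $\mathcal{A}$ expresses $\alpha_{\mathcal{H}}$, whiskered by connection-preserving $1$-isomorphisms, as a pasting of $\alpha_{\mathcal{G}}$ with the pulled-back $2$-isomorphisms $\beta$. By hypothesis $\alpha_{\mathcal{G}}$ is connection-preserving, and $\beta$ is connection-preserving by the previous step; since connection-preserving $2$-isomorphisms are closed under composition and whiskering by connection-preserving $1$-isomorphisms, it follows that $\alpha_{\mathcal{H}}$ is connection-preserving as well. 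Thus $(\lambda',\eta')$ is a multiplicative connection on $\mathcal{H}$ and $\mathcal{A}^{\varepsilon}\maps \mathcal{G}^{\lambda} \to \mathcal{H}^{\lambda'}$ is connection-preserving, completing the proof. The expected obstacle is purely bookkeeping: making the coherence diagram of $\mathcal{A}$ explicit in the bundle-gerbe model of \cite{waldorf5,waldorf8} and tracking the connection-preserving property through it, whereas the existence and uniqueness inputs used in the first two steps are standard.
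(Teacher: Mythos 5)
Your argument is correct and follows essentially the same route as the paper: first transport the connection on the underlying bundle gerbes along $\mathcal{A}$ (the paper cites \cite[Lemma 5.2.4]{waldorf8} for this), then use the coherence 2-isomorphism of the multiplicative 1-isomorphism to induce a compatible connection $\eta'$ on $\mathcal{M}_{\mathcal{H}}$ (\cite[Lemma 5.2.5]{waldorf8}), and finally observe that the associator of $\mathcal{H}$ is determined by that of $\mathcal{G}$ and the coherence data, hence connection-preserving. Your write-up merely makes explicit the two cited lemmata and the final diagram chase; no substantive difference.
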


\begin{proof}
First of all, there exists a connection $\varepsilon$ on $\mathcal{A}$  and a connection $\lambda'$ on $\mathcal{H}$ such that $\mathcal{A}^{\varepsilon}\maps\mathcal{G}^{\lambda} \to \mathcal{H}^{\lambda'}$ is connection-preserving \cite[Lemma 5.2.4]{waldorf8}. Part of the structure of a \emph{multiplicative} 1-isomorphism is a 2-isomorphism \begin{equation*}
\alxydim{@=1.2cm}{\mathrm{pr}_1^{*}\mathcal{G}^{\lambda} \otimes \mathrm{pr}_2^{*}\mathcal{G}^{\lambda} \ar[d]_{\mathrm{pr}_1^{*}\mathcal{A} \otimes \mathrm{pr}_2^{*}\mathcal{A}} \ar[r]^-{\mathcal{M}} & m^{*}\mathcal{G} ^{\lambda}\otimes \mathcal{I}_{\rho} \ar@{=>}[dl]|*+{\gamma} \ar[d]^{m^{*}\mathcal{A}} \\ \mathrm{pr}_1^{*}\mathcal{H} ^{\lambda'}\otimes \mathrm{pr}_2^{*}\mathcal{H}^{\lambda'} \ar[r]_-{\mathcal{M}'} & m^{*}\mathcal{H} ^{\lambda'}\otimes \mathcal{I}_{\rho}\text{,} }
\end{equation*} 
which goes between 1-isomorphisms between bundle gerbes with connections. The 1-isomorphisms on three sides are in fact equipped with compatible connections. By \cite[Lemma 5.2.5]{waldorf8} there exists a compatible connection $\eta'$ on $\mathcal{M}'$ such that $\gamma$ is connection-preserving. Since the associator $\alpha'$ of $\mathcal{H}$ is uniquely determined by the associator $\alpha$ of $\mathcal{G}$ and $\gamma$, it follows that $\alpha'$ is also connection-preserving. Thus, $(\lambda',\eta')$ is a multiplicative connection on $\mathcal{H}$. 
\end{proof}

\begin{lemma}
\label{lem:multisoindep}
Let $\mathcal{G}$ and $\mathcal{H}$ be isomorphic multiplicative bundle gerbes over $G$. Let $(\lambda,\eta)$ and $(\lambda',\eta')$ be multiplicative connections on $\mathcal{G}$ and $\mathcal{H}$, respectively. Then, there exists an isomorphism $L\mathcal{G}^{\lambda,\eta} \cong L\mathcal{H}^{\lambda',\eta'}$.
\end{lemma}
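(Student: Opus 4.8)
The plan is to reduce the statement to two results we have already established: Lemma~\ref{lem:multconnind}, which transports a multiplicative connection across a multiplicative $1$-isomorphism, and Proposition~\ref{prop:loopgroupindep}, which shows that the loop group of a \emph{fixed} multiplicative bundle gerbe is independent of the chosen multiplicative connection. By hypothesis there is a multiplicative $1$-isomorphism $\mathcal{A}\maps \mathcal{G} \to \mathcal{H}$, and this is the only input from the assumption ``$\mathcal{G}$ and $\mathcal{H}$ are isomorphic'' that we use.

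First I would apply Lemma~\ref{lem:multconnind} to $\mathcal{A}$ and the given multiplicative connection $(\lambda,\eta)$ on $\mathcal{G}$. This produces a multiplicative connection $(\tilde\lambda,\tilde\eta)$ on $\mathcal{H}$ together with a connection $\varepsilon$ on $\mathcal{A}$ such that $\mathcal{A}^{\varepsilon}\maps \mathcal{G}^{\lambda} \to \mathcal{H}^{\tilde\lambda}$ is connection-preserving. Transgressing this connection-preserving $1$-isomorphism yields a bundle isomorphism $\mathscr{T}_{\mathcal{A}^{\varepsilon}}\maps \mathscr{T}_{\mathcal{G}^{\lambda}} \to \mathscr{T}_{\mathcal{H}^{\tilde\lambda}}$ over $LG$. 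Because $\mathcal{A}$ is \emph{multiplicative}, it carries the coherence $2$-isomorphism $\gamma$ relating $\mathcal{M}^{\eta}$ and ${\mathcal{M}'}^{\tilde\eta}$ that appears in the proof of Lemma~\ref{lem:multconnind}; since the transgression $2$-functor $\mathscr{T}$ is monoidal and $\mathcal{A}^{\varepsilon}$ is connection-preserving, $\mathscr{T}$ sends this data to a commuting square of the same shape as the one in the proof of Proposition~\ref{prop:loopgroupindep}, relating $\mathscr{T}_{\mathcal{A}^{\varepsilon}}$ to the group multiplications $\mathscr{T}_{\mathcal{M}^{\eta}}$ and $\mathscr{T}_{{\mathcal{M}'}^{\tilde\eta}}$ provided by Theorem~\ref{th:central}. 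Hence $\mathscr{T}_{\mathcal{A}^{\varepsilon}}$ is an isomorphism of central extensions $L\mathcal{G}^{\lambda,\eta} \cong L\mathcal{H}^{\tilde\lambda,\tilde\eta}$.

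Next I would invoke Proposition~\ref{prop:loopgroupindep} for the single multiplicative bundle gerbe $\mathcal{H}$: the two multiplicative connections $(\tilde\lambda,\tilde\eta)$ and $(\lambda',\eta')$ determine an equivalence $L\mathcal{H}^{\tilde\lambda,\tilde\eta} \cong L\mathcal{H}^{\lambda',\eta'}$. Composing this with the isomorphism from the previous step gives the desired isomorphism $L\mathcal{G}^{\lambda,\eta} \cong L\mathcal{H}^{\lambda',\eta'}$. Note that only the \emph{existence} of an isomorphism is claimed here, so the homotopy-class refinement recorded in Proposition~\ref{prop:loopgroupindep} is not needed.

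The main obstacle is the middle step: verifying that transgression converts a connection-preserving multiplicative $1$-isomorphism into a homomorphism of the central extensions. This rests on the monoidality of $\mathscr{T}$ together with the observation that the transgression of the coherence $2$-isomorphism $\gamma$ is precisely the commutativity condition making $\mathscr{T}_{\mathcal{A}^{\varepsilon}}$ respect the multiplications that define the group structures on $L\mathcal{G}^{\lambda,\eta}$ and $L\mathcal{H}^{\tilde\lambda,\tilde\eta}$. Once this compatibility is in place, the remainder is a routine composition of equivalences already established.
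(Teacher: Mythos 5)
Your proposal follows exactly the paper's argument: apply Lemma \ref{lem:multconnind} to transport $(\lambda,\eta)$ across the multiplicative isomorphism to a connection $(\tilde\lambda,\tilde\eta)$ on $\mathcal{H}$, transgress the resulting connection-preserving multiplicative isomorphism to an isomorphism $L\mathcal{G}^{\lambda,\eta}\cong L\mathcal{H}^{\tilde\lambda,\tilde\eta}$ of central extensions, and conclude with Proposition \ref{prop:loopgroupindep}. The only difference is that you spell out the monoidality/coherence verification in the middle step, which the paper leaves implicit; the argument is correct.
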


\begin{proof}
By Lemma \ref{lem:multconnind} there exists a connection $(\tilde\lambda,\tilde\eta)$ on $\mathcal{H}$ such that $\mathcal{G}^{\lambda}$ and $\mathcal{H}^{\tilde\lambda}$ are isomorphic as multiplicative gerbes. This isomorphism transgresses to a Lie group isomorphism $L\mathcal{G}^{\lambda,\eta} \cong L\mathcal{H}^{\tilde\lambda,\tilde\eta}$. Together with Proposition \ref{prop:loopgroupindep}, this shows the claim. \end{proof}

\begin{proposition}
\label{prop:loopgroupcov}
If $\Lambda\maps  \Gamma \to \Omega$ is a Lie 2-group homomorphism, there exists a Lie group homomorphism $L\Lambda\maps  L\Gamma \to L\Omega$ such that the diagram
\begin{equation*}
\alxydim{@=1.2cm}{\upi_1\Gamma \ar[d]_{\upi_1\Lambda} \ar[r] & L\Gamma \ar[d]^{L\Lambda} \ar[r] & L\upi_0\Gamma \ar[d]^{L\upi_0\Lambda} \\ \upi_1\Omega \ar[r] & L\Omega \ar[r] & L\upi_0\Omega }
\end{equation*}
is commutative. 
\end{proposition}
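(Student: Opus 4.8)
The plan is to build $L\Lambda$ at the level of the transgressed multiplicative gerbes, exploiting that $L\Gamma\eq L\mathcal{G}_\Gamma$ and $L\Omega\eq L\mathcal{G}_\Omega$ by Definition \ref{def:loopgroup}. Abbreviate $G\df\upi_0\Gamma$, $G'\df\upi_0\Omega$, $A\df\upi_1\Gamma$, $A'\df\upi_1\Omega$, and set $\phi\df\upi_0\Lambda\maps G \to G'$, $\psi\df\upi_1\Lambda\maps A \to A'$; here both $\Gamma$ and $\Omega$ are assumed smoothly separable with $\upi_0$ compact, so that $L\Gamma$ and $L\Omega$ are defined. The homomorphism $L\Lambda$ will arise as an isomorphism comparing two central extensions of $LG$ by $A'$: the pushout of $L\Gamma$ along $\psi$ and the pullback of $L\Omega$ along $L\phi$.

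The first step is to produce a \emph{multiplicative} 1-isomorphism
\begin{equation*}
\mathcal{A}_{\Lambda} \maps \psi_{*}(\mathcal{G}_{\Gamma}) \to \phi^{*}(\mathcal{G}_{\Omega})
\end{equation*}
of multiplicative $\mathcal{B}A'$-bundle gerbes over $G$. Recall from Section \ref{sec:lift2gerbe} that $\mathcal{G}_\Gamma\eq\mathscr{E}(\Gamma)$ and $\mathcal{G}_\Omega\eq\mathscr{E}(\Omega)$, where $\mathscr{E}$ is the $2$-functor of \cite[Section 7.1]{NW11} applied to $\Gamma$ and $\Omega$ regarded as $\mathcal{B}A$- and $\mathcal{B}A'$-$2$-bundles over $G$ and $G'$. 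The anafunctor $\Lambda$ covers $\phi$ on objects and is equivariant for the structure actions along $\mathcal{B}\psi$; by the universal properties of extension and pullback it therefore exhibits an equivalence of $\mathcal{B}A'$-$2$-bundles $\psi_{*}(\Gamma) \cong \phi^{*}(\Omega)$ over $G$. Since $\mathscr{E}$ is a morphism of pre-$2$-stacks \cite[Proposition 7.1.8]{NW11}, it commutes with extension along $\mathcal{B}\psi$ and with pullback along $\phi$, so applying $\mathscr{E}$ yields $\mathcal{A}_\Lambda$. That $\mathcal{A}_\Lambda$ is multiplicative follows from the compatibility of $\Lambda$ with the $2$-group multiplications (the very coherence that produced $\mathcal{M}_\Gamma$ from the multiplication $M$ in Section \ref{sec:lift2gerbe}).

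The second step is to transgress. Using that $\mathscr{T}$ is natural in the base manifold and in the abelian band, one has $\mathscr{T}_{\phi^{*}\mathcal{G}_\Omega} \cong (L\phi)^{*}\mathscr{T}_{\mathcal{G}_\Omega}\eq(L\phi)^{*}(L\Omega)$, the pullback central extension of $LG$ by $A'$ along $L\phi\eq L\upi_0\Lambda$, and $\mathscr{T}_{\psi_{*}\mathcal{G}_\Gamma} \cong \psi_{*}\mathscr{T}_{\mathcal{G}_\Gamma}\eq\psi_{*}(L\Gamma)$, the pushout central extension. Transgressing the multiplicative $\mathcal{A}_\Lambda$ then gives an isomorphism of central extensions of $LG$ by $A'$,
\begin{equation*}
\mathscr{T}_{\mathcal{A}_{\Lambda}} \maps \psi_{*}(L\Gamma) \to (L\phi)^{*}(L\Omega)\text{,}
\end{equation*}
covering $\id_{LG}$ and equal to $\id_{A'}$ on the band. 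I would then define $L\Lambda$ as the composite
\begin{equation*}
L\Gamma \to \psi_{*}(L\Gamma) \to (L\phi)^{*}(L\Omega) \to L\Omega\text{,}
\end{equation*}
the middle arrow being $\mathscr{T}_{\mathcal{A}_\Lambda}$ and the outer two the canonical map into the pushout and out of the pullback. By construction this is a Lie group homomorphism that covers $L\upi_0\Lambda$ on the quotient and restricts to $\upi_1\Lambda$ on the central kernel, which is exactly the commutativity of the stated square.

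The main obstacle is the bookkeeping of connections. Transgression only applies to gerbes carrying a multiplicative connection, and $L\Gamma$, $L\Omega$ were fixed using chosen multiplicative connections $(\lambda,\eta)$ on $\mathcal{G}_\Gamma$ and $(\lambda',\eta')$ on $\mathcal{G}_\Omega$. The gerbes $\psi_{*}\mathcal{G}_\Gamma$ and $\phi^{*}\mathcal{G}_\Omega$ inherit induced multiplicative connections, but $\mathcal{A}_\Lambda$ need not be connection-preserving for these a priori. The real work is therefore to invoke Lemma \ref{lem:multconnind} to place a connection on $\mathcal{A}_\Lambda$ (equivalently, to adjust the connection on $\phi^{*}\mathcal{G}_\Omega$) so that $\mathcal{A}_\Lambda$ becomes connection-preserving, and then to use Proposition \ref{prop:loopgroupindep} together with Lemma \ref{lem:multisoindep} to check that the resulting $L\Lambda$ is independent of all auxiliary choices up to the inherent homotopy ambiguity. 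The two naturality statements for $\mathscr{T}$ used above --- compatibility with base-pullback and with band-extension --- should be verified directly from the construction of $\mathscr{T}$ in \cite{waldorf5,waldorf10}; these, along with the connection adjustment, are the only nonformal points.
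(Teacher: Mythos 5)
Your proposal is correct and follows essentially the same route as the paper: you construct the multiplicative 1-isomorphism $(\upi_1\Lambda)_{*}\mathcal{G}_\Gamma \to (\upi_0\Lambda)^{*}\mathcal{G}_\Omega$ from the induced morphism of principal 2-bundles, handle the connections via Lemma \ref{lem:multconnind} and Lemma \ref{lem:multisoindep}, and use the naturality of $\mathscr{T}$ in base and band to read off $L\Lambda$ as an isomorphism between the pushout and pullback central extensions. The only difference is presentational --- you spell out the composite $L\Gamma \to \psi_{*}(L\Gamma)\to (L\phi)^{*}(L\Omega)\to L\Omega$, which the paper compresses into the remark that such an isomorphism \emph{is} the claimed homomorphism.
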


\begin{proof}
We recall that the  bundle gerbes $\mathcal{G}_{\Gamma}$ and $\mathcal{G}_{\Omega}$ correspond to the  principal 2-bundles $\Gamma$ over $\upi_0\Gamma$ and $\Omega$ over $\upi_0\Omega$, respectively, under the equivalence of Theorem \ref{thm:equiv}. The Lie 2-group homomorphism $\Lambda$ defines a  1-morphism
\begin{equation*}
(\upi_1\Lambda)_{*}\Gamma \to (\upi_0\Lambda)^{*}\Omega
\end{equation*}
of $\mathcal{B}\upi_1\Omega$-2-bundles over $\upi_0\Gamma$, which is compatible with the composition of $\Gamma$ and $\Omega$ in exactly such a way that  the  induced 1-isomorphism
\begin{equation}
\label{eq:multiso}
\mathcal{D}\maps (\upi_1\Lambda)_{*}\mathcal{G}_\Gamma \to (\upi_0\Lambda)^{*}\mathcal{G}_\Omega
\end{equation}
 of  $\mathcal{B}\upi_1\Omega$-bundle gerbes over $\upi_0\Gamma$ is multiplicative. Now let $(\lambda,\eta)$ and $(\lambda',\eta')$ be multiplicative connections on $\mathcal{G}_{\Gamma}$ and $\mathcal{G}_{\Omega}$, respectively. These induce connections on $(\upi_1\Lambda)_{*}\mathcal{G}_\Gamma$ and $(\upi_0\Lambda)^{*}\mathcal{G}_\Omega$. Now Lemma \ref{lem:multisoindep} applies and yields an isomorphism  $L((\upi_1\Lambda)_{*}\mathcal{G}_\Gamma)^{(\upi_1\Lambda)_{*}(\lambda,\eta)}\cong L((\upi_0\Lambda)^{*}\mathcal{G}_\Omega)^{(\upi_0\Lambda)^{*}(\lambda',\eta')}$. Since the transgression functor $\mathscr{T}$ is contravariant in the base manifold and covariant in the Lie group, we get an isomorphism
\begin{equation*}
(\upi_1\Lambda)_{*}(L\mathcal{G}_\Gamma^{\lambda,\eta}) \cong L((\upi_1\Lambda)_{*}\mathcal{G}_\Gamma)^{(\upi_1\Lambda)_{*}(\lambda,\eta)}\cong L((\upi_0\Lambda)^{*}\mathcal{G}_\Omega)^{(\upi_0\Lambda)^{*}(\lambda',\eta')}\cong (\upi_0\Lambda)^{*}(L\mathcal{G}_\Omega^{\lambda',\eta'})\text{.}
\end{equation*}
Such an isomorphism is the same as the claimed homomorphism $L\Lambda$. 
\end{proof}

\begin{remark}
The group homomorphism $L\Lambda$ of Proposition \ref{prop:loopgroupcov} is determined up to homotopy, just like the equivalence of Proposition \ref{prop:loopgroupindep}. 
\end{remark}

\subsection{Transgression of the Lifting Bundle 2-Gerbe}

 According to Theorem \ref{thm:lift}, a $\Gamma$-bundle gerbe $\mathcal{G}$ defines a principal $\mathcal{B}\upi_0\Gamma$-bundle $E \df \pi_{*}(\mathcal{G})$ over $M$ together with a trivialization $\mathbb{T}_{\mathcal{G}}$ of the lifting bundle 2-gerbe $\mathbb{L}_E$.    
%KW: eigentlich ist die Trivialisierung
%    ja nur bis auf Isomorphie bestimmt...
The crucial point is that although $\mathcal{G}$ is a non-abelian gerbe, the lifting bundle 2-gerbe $\mathbb{L}_{E}$ and its trivialization $\mathbb{T}_{\mathcal{G}}$ are both abelian.

First we remark that $LE$ is a principal $L\upi_0\Gamma$-bundle over $LM$. Here we need the assumption that $\upi_0\Gamma$ is connected \cite[Lemma 5.1]{waldorf13}. The obstruction to lift the structure group $LE$ from $L\upi_0\Gamma$ to the loop group $L\Gamma$ is represented by the lifting bundle gerbe $\mathcal{L}_{LE,L\Gamma}$ over $LM$. In short, there is a canonical equivalence of groupoids
\begin{equation}
\label{eq:liftingtheory}
\triv(\mathcal{L}_{LE,L\Gamma}) \cong \lift_{LE,L\Gamma}
\end{equation}
which is an analogue of Theorem \ref{thm:lift} for ordinary bundles \cite{murray,waldorf13}.
The plan is to transgress the lifting bundle 2-gerbe $\mathbb{L}_E$ to the loop space and identify this transgression $\mathscr{T}_{\mathbb{L}_E}$ with the lifting gerbe. The main result of this section is the following.

\begin{proposition}
\label{prop:translifting}
Let $\Gamma$ be a smoothly separable Lie 2-group with $\upi_0\Gamma$ compact and connected, and let $E$ be a principal $\upi_0\Gamma$-bundle over $M$. \begin{enumerate}[(a)]

\item
There is a canonical isomorphism
\begin{equation*}
\phi_{E,\Gamma}\maps  \mathscr{T}_{\mathbb{L}_E} \to \mathcal{L}_{LE,L\Gamma}
\end{equation*}
between the transgression of the lifting bundle 2-gerbe $\mathbb{L}_E$ and the lifting bundle gerbe for the problem of lifting the structure group of $LE$ from $LG$ to $L\Gamma$. 

\item
Suppose $\Omega$ is another Lie 2-group, and $\Lambda\maps \Gamma\to\Omega$ is a Lie 2-group homomorphism. Let $L\Lambda\maps L\Gamma \to L\Omega$ be the Lie group homomorphism of Proposition \ref{prop:loopgroupindep}, let $E' \df (\upi_0\Lambda)_{*}(E)$, and let $\mathcal{L}_{L\Lambda}\maps  \mathcal{L}_{LE,L\Gamma} \to \mathcal{L}_{LE',L\Omega}$ be the induced isomorphism between lifting bundle gerbes. Then, there exists an isomorphism
\begin{equation*}
\mathbb{C}\maps  (\upi_1\Lambda)_{*}(\mathbb{L}_E) \to \mathbb{L}_{E'}
\end{equation*}
of $\mathcal{B}\upi_1\Omega$-bundle 2-gerbes over $M$ and a 2-isomorphism
\begin{equation*}
\alxydim{@C=2.4cm@R=1.2cm}{(\upi_1\Lambda)_{*} (\mathscr{T}_{\mathbb{L}_E^{}}) \ar[d]_{\mathscr{T}_{\mathbb{C}}} \ar[r]^-{(\upi_1\Lambda)_{*} (\phi_{E,\Gamma})} & (\upi_1\Lambda)_{*} (\mathcal{L}_{LE,L\Gamma}) \ar@{=>}[dl] \ar[d]^{\mathcal{L}_{L\Lambda}} \\ \mathscr{T}_{\mathbb{L}_{E'}} \ar[r]_-{\phi_{E',\Omega}} & \mathcal{L}_{LE',L\Omega}}
\end{equation*}
of $\mathcal{B}\upi_1\Omega$-bundle gerbes over $LM$.
In other words, the canonical isomorphism of (a) is compatible with Lie 2-group homomorphisms.
\end{enumerate}

\end{proposition}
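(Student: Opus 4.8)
The plan is to reduce both statements to the naturality of abelian transgression under pullbacks, combined with the identifications $\mathscr{T}_{\mathcal{G}_\Gamma} \cong L\Gamma$ and $(\id\otimes t_\rho)\circ\mathscr{T}_{\mathcal{M}_\Gamma} \cong$ (group law of $L\Gamma$) that are built into Theorem \ref{th:central}. For part (a), I would first fix a multiplicative connection $(\lambda,\eta)$ on $\mathcal{G}_\Gamma$ (which exists since $\upi_0\Gamma$ is compact) and equip $\mathbb{L}_E$ with the connection obtained by pulling $(\lambda,\eta)$ back along the difference maps $\delta_n$ of Definition \ref{def:lifting2gerbe}. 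The transgression $\mathscr{T}_{\mathbb{L}_E}$ is then a $\mathcal{B}\upi_1\Gamma$-bundle gerbe over $LM$ whose surjective submersion is $LE \to LM$ — a principal $L\upi_0\Gamma$-bundle by connectedness of $\upi_0\Gamma$, see \cite[Lemma 5.1]{waldorf13} — and whose underlying bundle over $(LE)^{[2]}$ is the transgression of $\delta_1^{*}\mathcal{G}_\Gamma$. Since looping commutes with fibre products, $(LE)^{[2]} \cong L(E^{[2]})$, and since transgression commutes with pullback this bundle is canonically $(L\delta_1)^{*}\mathscr{T}_{\mathcal{G}_\Gamma} = (L\delta_1)^{*}L\Gamma$, where $L\delta_1\maps (LE)^{[2]} \to L\upi_0\Gamma$ is exactly the difference map of the looped bundle $LE$. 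This is the bundle defining $\mathcal{L}_{LE,L\Gamma}$, so $\phi_{E,\Gamma}$ is fixed on underlying bundles by this chain of canonical identifications.

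Next I would check that $\phi_{E,\Gamma}$ intertwines the bundle gerbe products. The product of $\mathscr{T}_{\mathbb{L}_E}$ is the transgression of $\delta_2^{*}\mathcal{M}_\Gamma$, which by naturality equals $(L\delta_2)^{*}\mathscr{T}_{\mathcal{M}_\Gamma}$; composing with the canonical trivialization $t_\rho$ of $\mathscr{T}_{\mathcal{I}_\rho}$ turns this into $(L\delta_2)^{*}$ of the isomorphism that defines the group multiplication of $L\Gamma$ in Theorem \ref{th:central}. On the other side, the product of $\mathcal{L}_{LE,L\Gamma}$ is by definition induced by the group law of $L\Gamma$, so the two products agree under $\phi_{E,\Gamma}$; associativity is matched by transgressing the associator $\delta_3^{*}\mu_\Gamma$ against associativity of multiplication in $L\Gamma$. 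Independence of the chosen multiplicative connection follows from Proposition \ref{prop:loopgroupindep} and Lemma \ref{lem:1}: two choices are related by the homotopy class of equivalences $L\mathcal{G}^{\lambda,\eta}\cong L\mathcal{G}^{\lambda',\eta'}$, which transports $\phi_{E,\Gamma}$ to itself up to precisely that homotopy ambiguity, so the isomorphism is canonical in the required sense.

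For part (b) I would construct $\mathbb{C}$ by pulling the multiplicative $1$-isomorphism $\mathcal{D}\maps (\upi_1\Lambda)_{*}\mathcal{G}_\Gamma \to (\upi_0\Lambda)^{*}\mathcal{G}_\Omega$ of \erf{eq:multiso} back along the difference maps, using that the difference maps of $E$ and $E' = (\upi_0\Lambda)_{*}(E)$ are intertwined componentwise by $\upi_0\Lambda$. The filling $2$-isomorphism is then obtained by transgressing this construction: by naturality $\mathscr{T}_{\mathbb{C}} = (L\delta_1)^{*}\mathscr{T}_{\mathcal{D}}$, and $\mathscr{T}_{\mathcal{D}}$ is exactly the multiplicative isomorphism whose transgression defines $L\Lambda$ in the proof of Proposition \ref{prop:loopgroupcov}, while $\mathcal{L}_{L\Lambda}$ is the lifting-gerbe isomorphism induced by the homomorphism $L\Lambda$. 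Both routes around the square therefore reduce to $(L\delta_1)^{*}L\Lambda$, and the coherence datum of this identification supplies the required $2$-isomorphism; its compatibility with the connection choices is again governed by Proposition \ref{prop:loopgroupindep} and the remark following Proposition \ref{prop:loopgroupcov}.

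The main obstacle I expect is the connection bookkeeping. The transgression functor $\mathscr{T}$ is only defined on bundle (2-)gerbes \emph{with} connection, so the entire construction must be carried out with an auxiliary multiplicative connection, and the genuine work is to show that the resulting $\phi_{E,\Gamma}$ — and the $2$-isomorphism in (b) — are canonical, i.e. independent of that choice up to the controlled homotopy ambiguity recorded above. The one truly technical point is verifying that the naturality of transgression under pullback is itself compatible with the multiplicative-connection data, so that the transgressed products and associators really do match those coming from the group structure of $L\Gamma$; granting that, everything else is a diagram chase through the canonical identifications.
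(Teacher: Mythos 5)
Your overall strategy is the same as the paper's: transgress $\mathbb{L}_E$ piecewise over the submersion $LE\to LM$, identify the resulting bundle over $(LE)^{[2]}\cong L(E^{[2]})$ with $\delta_1^{*}L\Gamma$ using $\mathscr{T}_{\mathcal{G}_\Gamma}\cong L\Gamma$, match the products via $\mathscr{T}_{\mathcal{M}^{\eta}}$ against the group law of $L\Gamma$, and control the connection choices by Proposition \ref{prop:loopgroupindep}; for (b), pull back the multiplicative $1$-isomorphism $\mathcal{D}$ along the difference maps. However, there is a concrete gap exactly at the point you flag as "the one truly technical point": the data obtained by pulling $(\lambda,\eta)$ back along the difference maps is \emph{not} a connection on $\mathbb{L}_E$. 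The product $\delta_2^{*}\mathcal{M}^{\eta}$ lands in $\mathrm{pr}_{13}^{*}\delta_1^{*}\mathcal{G}^{\lambda}\otimes\mathcal{I}_{\delta_2^{*}\rho}$, so with the naive pullback curving the structure $1$-isomorphism of the $2$-gerbe is not connection-preserving, and $\mathscr{T}_{\mathbb{L}_E}$ is simply not defined. The paper repairs this by choosing, via the exactness of the simplicial de Rham sequence for $E\to M$ (Murray's sequence), a $2$-form $\omega\in\Omega^2(E^{[2]},\mathfrak{a})$ with $\Delta\omega=-\delta_2^{*}\rho$ and a $3$-form $C$ with $\Delta C=\delta_1^{*}\mathrm{curv}(\mathcal{G}^{\lambda})+\mathrm{d}\omega$, and uses the twisted curving $\delta_1^{*}\lambda+\omega$. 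This correction is not mere bookkeeping: the canonical isomorphism $\phi_{E,\Gamma}$ of part (a) \emph{is} the canonical trivialization $t_{\omega}$ of $\mathscr{T}_{\mathcal{I}_{\omega}}$ applied to $\mathcal{H}_{\lambda,\omega}=\delta_1^{*}\mathcal{G}^{\lambda}\otimes\mathcal{I}_{\omega}$, so without $\omega$ you have neither the domain of $\phi_{E,\Gamma}$ nor the map itself.

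The same omission propagates into your independence argument. Since the extension $(C,\omega)$ is an additional choice beyond $(\lambda,\eta)$, well-definedness requires comparing two extensions, and the paper's Lemma \ref{lem:3} does this by producing $\kappa\in\Omega^2(E,\mathfrak{a})$ with $\Delta\kappa=\omega'-\omega+\delta_2^{*}\beta$ (again using exactness over $E\to M$, with $\beta$ from Lemma \ref{lem:1}) and building a connection-preserving $\id^{\kappa}\maps\mathbb{L}_E^{\chi}\to\mathbb{L}_E^{\chi'}\otimes\mathbb{I}_F$; your appeal to Proposition \ref{prop:loopgroupindep} alone only controls the change of $L\mathcal{G}^{\lambda,\eta}$, not the change of the transgressed $2$-gerbe $\mathscr{T}_{\mathbb{L}_E^{\chi}}$ itself. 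Likewise in (b), the two connections $(\tilde\lambda,\tilde\eta)$ (from Lemma \ref{lem:multconnind}) and $(\upi_1\Lambda)_{*}(\lambda,\eta)$ on $\widetilde{\mathcal{G}}$, and the two corresponding extensions, must be compared by this same mechanism before the strictly commutative square \erf{eq:lem4} can be invoked. Once the $(\omega,C,\kappa)$ layer is supplied, the rest of your argument goes through as you describe.
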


Proposition \ref{prop:translifting} is proved by the following lemmata, in which we carefully deal with the dependence of $L\Gamma$ and $\mathscr{T}_{\mathbb{L}_E}$ on choices of connections. For this discussion, we assume  a connected compact Lie group $G$, an abelian Lie group $A$, and some multiplicative $\mathcal{B}A$-bundle gerbe $(\mathcal{G},\mathcal{M},\alpha)$ over $G$. Let $\mathbb{L}_E$ be the corresponding lifting bundle 2-gerbe. We equip it with a connection following \cite[Section 3.2]{waldorf5}. Let $(\lambda,\eta)$ be a multiplicative connection on $\mathcal{G}$ with associated 2-form $\rho$. Then, using the exact sequence of \cite[Section 8]{murray} there exist a 2-form $\omega \in \Omega^2(E^{[2]})$ with $\Delta\omega \eq- \delta_2^{*}\rho$ and a 3-form $C \in \Omega^{3}(E)$ with $\Delta C \eq \delta_1^{*}\mathrm{curv}(\mathcal{G}^{\lambda}) + \mathrm{d}\omega$. 

We equip the bundle gerbe $\delta_1^{*}\mathcal{G}$ over $E^{[2]}$ with the connection $\delta_1^{*}\lambda + \omega$. For simplicity, we shall denote the resulting bundle gerbe with connection by $\mathcal{H}_{\lambda,\omega} \df \delta_1^{*}\mathcal{G}^{\lambda} \otimes \mathcal{I}_{\omega}$. 
It follows that  the 1-isomorphism $\mathcal{N}_\eta$ defined by
\begin{equation*}
\alxydim{@R=1.2cm}{\mathrm{pr}_{23}^{*}\mathcal{H}_{\lambda,\omega} \otimes \mathrm{pr}_{12}^{*}\mathcal{H}_{\lambda,\omega} \ar@{=}[r] & \mathrm{pr}_{23}^{*}(\delta_1^{*}\mathcal{G}^{\lambda}) \otimes \mathrm{pr}_{12}^{*}(\delta_1^{*}\mathcal{G}^{\lambda}) \otimes \mathcal{I}_{\mathrm{pr}_{12}^{*}\omega + \mathrm{pr}_{23}^{*}\omega} \ar[d]^-{\delta_2^{*}\mathcal{M}_{\eta} \otimes \id} \\ & \mathrm{pr}_{13}^{*}(\delta_1^{*}\mathcal{G}^{\lambda} \otimes \mathcal{I}_{\rho})  \otimes \mathcal{I}_{\mathrm{pr}_{12}^{*}\omega + \mathrm{pr}_{23}^{*}\omega} \ar@{=}[r] &  \mathrm{pr}_{13}^{*}\mathcal{H}_{\lambda,\omega}}
\end{equation*}
is connection-preserving, and that the 2-isomorphism $\delta_3^{*}\alpha \otimes \id$ is also connection-preserving. This means that $\chi \df (C,\delta_1^{*}\lambda + \omega,\delta_2^{*}\eta )$ is a connection on $\mathbb{L}_E$, together denoted by $\mathbb{L}^{\chi}_{E}$. We say that the pair $(C,\omega)$ is an \emph{extension} of the multiplicative connection $(\lambda,\eta)$ to a connection $\chi$ on $\mathbb{L}_E$.
Given the connection $\chi$, we define the following $\mathcal{B}A$-bundle gerbe $\mathscr{T}_{\mathbb{L}_E^{\chi}}$ over $LM$:
\begin{enumerate}[(a)]
\item 
its surjective submersion is $LE \to LM$. Note that $(LE)^{[k]} \eq L(E^{[k]})$.

\item
its principal $\mathcal{B}A$-bundle over $LE^{[2]}$ is $\mathscr{T}_{\mathcal{H}_{\lambda,\omega}}$.  

\item
its bundle gerbe product is $\mathscr{T}_{\mathcal{N}_{\eta}}$. 
Its associativity is guaranteed by the 2-isomorphism $\delta_3^{*}\alpha$.
 
\end{enumerate}
We remark that we have not used the 3-form $C$; it is so far only included for completeness. We have the following \quot{lifting commutes with transgression} result.

\begin{lemma}
\label{lem:2}
There  is a canonical isomorphism
$\phi_{\omega} \maps  \mathscr{T}_{\mathbb{L}_E^{\chi}} \to \mathcal{L}_{LE,L\mathcal{G}^{\lambda,\eta}}$
of $A$-bundle gerbes over $LM$. 
\end{lemma}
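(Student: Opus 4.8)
The plan is to compare the two $A$-bundle gerbes over $LM$ constituent by constituent, showing that their surjective submersions, their principal $A$-bundles, and their bundle gerbe products agree after transgression. Throughout I use three structural properties of the transgression functor $\mathscr{T}$ established in \cite{waldorf5,waldorf10}: that $\mathscr{T}$ is monoidal, that it commutes with pullback along smooth maps, and that the transgression $\mathscr{T}_{\mathcal{I}_\omega}$ of a trivial bundle gerbe carries a canonical trivialization $t_\omega$.

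First I would observe that both gerbes have the \emph{same} surjective submersion $LE \to LM$, and recall the identity $(LE)^{[k]} \eq L(E^{[k]})$, so that all higher fibre products involved are loop spaces of the corresponding fibre products of $E$. Second, I identify the principal $A$-bundle of each gerbe over $LE^{[2]}$. On the transgression side it is $\mathscr{T}_{\mathcal{H}_{\lambda,\omega}}$ with $\mathcal{H}_{\lambda,\omega} \eq \delta_1^{*}\mathcal{G}^{\lambda} \otimes \mathcal{I}_{\omega}$. Monoidality gives $\mathscr{T}_{\mathcal{H}_{\lambda,\omega}} \cong \mathscr{T}_{\delta_1^{*}\mathcal{G}^{\lambda}} \otimes \mathscr{T}_{\mathcal{I}_{\omega}}$; the canonical trivialization $t_{\omega}\maps \mathscr{T}_{\mathcal{I}_{\omega}} \to \trivlin$ removes the second factor; and commutation with pullback along $\delta_1\maps E^{[2]} \to G$ yields $\mathscr{T}_{\delta_1^{*}\mathcal{G}^{\lambda}} \cong (L\delta_1)^{*}\mathscr{T}_{\mathcal{G}^{\lambda}}$. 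The composite of these three isomorphisms is the sought map $\phi_{\omega}$ on the level of bundles, identifying $\mathscr{T}_{\mathcal{H}_{\lambda,\omega}}$ with $(L\delta_1)^{*}\mathscr{T}_{\mathcal{G}^{\lambda}}$. On the lifting-gerbe side, the bundle over $LE^{[2]}$ is by definition the pullback of the central extension $L\mathcal{G}^{\lambda,\eta} \to LG$ along the difference map $L\delta_1$ of the $LG$-bundle $LE$; since by Theorem \ref{th:central} the total space of $L\mathcal{G}^{\lambda,\eta}$ is, as an $A$-bundle, exactly $\mathscr{T}_{\mathcal{G}^{\lambda}}$, this bundle is $(L\delta_1)^{*}\mathscr{T}_{\mathcal{G}^{\lambda}}$ as well. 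Hence the two bundles agree under $\phi_{\omega}$.

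Third --- and this is the step I expect to be the main obstacle --- I would check that $\phi_{\omega}$ intertwines the two bundle gerbe products over $LE^{[3]}$. The product of the lifting gerbe is induced by the group multiplication of $L\mathcal{G}^{\lambda,\eta}$, i.e. by $\mathscr{T}_{\mathcal{M}^{\eta}}$ together with the trivialization $t_{\rho}$, pulled back along $L\delta_2$. The product on the transgression side is $\mathscr{T}_{\mathcal{N}_{\eta}}$, and by the definition of $\mathcal{N}_{\eta}$ as $\delta_2^{*}\mathcal{M}_{\eta}$ tensored with identities on the trivial gerbes $\mathcal{I}_{\omega}$ and $\mathcal{I}_{\rho}$, commutation of $\mathscr{T}$ with pullback turns $\mathscr{T}_{\delta_2^{*}\mathcal{M}_{\eta}}$ into $(L\delta_2)^{*}\mathscr{T}_{\mathcal{M}^{\eta}}$. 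The delicate point is the bookkeeping of the trivial factors: after applying $\phi_{\omega}$ on all three legs, the trivializations $t_{\omega}$ entering through $\mathrm{pr}_{12}^{*}\omega + \mathrm{pr}_{23}^{*}\omega$ and $\mathrm{pr}_{13}^{*}\omega$ must cancel against the trivialization $t_{\rho}$ built into the multiplication of $L\mathcal{G}^{\lambda,\eta}$. This cancellation is governed precisely by the defining relation $\Delta\omega \eq -\delta_2^{*}\rho$ of the extension data, together with the compatibility of $t_{\rho}$ with the monoidal structure of $\mathscr{T}$.

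Granting this compatibility, $\phi_{\omega}$ carries $\mathscr{T}_{\mathcal{N}_{\eta}}$ to the lifting-gerbe product, so $\phi_{\omega}$ is an isomorphism of $A$-bundle gerbes over $LM$, as claimed. I would emphasize that $\phi_{\omega}$ is canonical once the extension datum $\omega$ is fixed (its dependence on $\omega$ being reflected in the notation), the independence of further choices being deferred to the subsequent lemma.
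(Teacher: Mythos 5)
Your proposal is correct and follows essentially the same route as the paper: both gerbes share the submersion $LE\to LM$, the bundle isomorphism $\phi_\omega$ comes from the canonical trivialization $t_\omega$ of $\mathscr{T}_{\mathcal{I}_\omega}$ combined with monoidality and pullback-compatibility of $\mathscr{T}$, and compatibility of the products reduces to the definition of $\mathcal{N}_\eta$. Your explicit bookkeeping of the trivial factors via $\Delta\omega \eq -\delta_2^{*}\rho$ is exactly the detail the paper compresses into ``this follows immediately from the definition of $\mathcal{N}^{\eta}$''.
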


\begin{proof}
Both bundle gerbes have the same surjective submersion, $LE \to LM$.
The claimed isomorphism comes from an isomorphism 
\begin{equation*}
   \phi_{\omega}\maps \mathscr{T}_{\mathcal{H}_{\lambda,\omega}} \to \delta_1^{*}L\mathcal{G}^{\lambda,\eta}
\end{equation*}
of principal $A$-bundles over $LE^{[2]}$, where $\delta_1\maps  LE^{[2]} \to LG$ is the difference map analogous to the one used in Definition \ref{def:lifting2gerbe} and $\delta_1^{*}L\mathcal{G}^{\lambda,\eta}$ is the principal bundle of $\mathcal{L}_{LE}$. Indeed, since $\mathcal{H}_{\lambda,\omega}\eq\delta_1^{*}\mathcal{G}^{\lambda} \nobr\otimes\nobr \mathcal{I}_{\omega}$ by definition, the  isomorphism $\phi_{\omega}$ is given by the canonical trivialization $t_{\omega}$ of $\mathscr{T}_{\mathcal{I}_{\omega}}$. It remains to ensure that $\phi_{\omega}$   is compatible with the bundle gerbes products:  $\mathscr{T}_{\mathcal{N}^{\eta}}$ on $\mathscr{T}_{\mathbb{L}_E^{\chi}}$ and $\delta_2^{*}\mathscr{T}_{\mathcal{M}^{\eta}}$ on $\mathcal{L}_{LE, L\mathcal{G}^{\lambda,\eta}}$. This follows   immediately from the definition of $\mathcal{N}^{\eta}$.
\end{proof}

The following lemma investigates the dependence of the isomorphism of Lemma \ref{lem:2} under a change of connections.  We suppose that $(\lambda,\eta)$ and $(\lambda',\eta')$ are multiplicative connections on $\mathcal{G}$, and that $(C,\omega)$ and $(C',\omega')$  are extensions to connections $\chi$ and $\chi'$ on $\mathbb{L}_E$, respectively. We assume that
\begin{equation*}
\varphi\maps  L\mathcal{G}^{\lambda,\eta} \to L\mathcal{G}^{\lambda',\eta'}
\end{equation*}
is one of the equivalences of central extensions of Proposition \ref{prop:loopgroupindep}, and remark that it induces a 1-isomorphism
\begin{equation*}
\mathcal{L}_{\varphi}\maps  \mathcal{L}_{LE,L\mathcal{G}^{\lambda,\eta}} \to \mathcal{L}_{LE,L\mathcal{G}^{\lambda',\eta'}}
\end{equation*}  
of lifting bundle gerbes over $LM$.

\begin{lemma}
\label{lem:3}
There exists a 3-form $F \in \Omega^3(M,\mathfrak{a})$, a connection $\kappa$ on the identity isomorphism $\id\maps  \mathbb{L}_E \to \mathbb{L}_E$ such that
\begin{equation*}
\id^{\kappa}\maps  \mathbb{L}_E^{\chi} \to \mathbb{L}_E^{\chi'} \otimes \mathbb{I}_F
\end{equation*}
is a connection-preserving isomorphism between $A$-bundle 2-gerbes over $M$, and a 2-isomorphism
\begin{equation*}
\alxydim{@=1.2cm}{\mathscr{T}_{\mathbb{L}_E^{\chi}} \ar[d]_{\mathscr{T}_{\id^{\kappa}}} \ar[r]^-{\phi_{\omega}} & \mathcal{L}_{LE,L\mathcal{G}^{\lambda,\eta}} \ar@{=>}[dl] \ar[d]^{\mathcal{L}_{\varphi}} \\ \mathscr{T}_{\mathbb{L}_E^{\chi'}} \ar[r]_-{\phi_{\omega'}} & \mathcal{L}_{LE,L\mathcal{G}^{\lambda',\eta'}}}
\end{equation*}
between bundle gerbe isomorphisms over $LM$.
\end{lemma}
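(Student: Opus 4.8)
The plan is to lift the comparison of connections carried out one categorical level below --- in Lemma \ref{lem:1} and in the proof of Proposition \ref{prop:loopgroupindep} --- to the level of the bundle 2-gerbe $\mathbb{L}_E$, and then to transgress. First I would apply Lemma \ref{lem:1} to the two multiplicative connections $(\lambda,\eta)$ and $(\lambda',\eta')$, obtaining a 2-form $\beta \in \Omega^2(G,\mathfrak{a})$ with $\rho \eq \rho' + \Delta\beta$ and a connection $\varepsilon$ on $\id\maps \mathcal{G} \to \mathcal{G}$ such that $\id^{\varepsilon}\maps \mathcal{G}^{\lambda} \to \mathcal{G}^{\lambda'} \otimes \mathcal{I}_{\beta}$ is connection-preserving. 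By the remark following Proposition \ref{prop:loopgroupindep} the equivalence $\varphi$ is determined only up to a null-homotopic multiplicative map, so I may assume $\varphi$ to be exactly the equivalence $\varphi_{\varepsilon} \eq (\id \otimes t_{\beta}) \circ \mathscr{T}_{\id_{\varepsilon}}$ built from this same pair $(\beta,\varepsilon)$ as in \erf{eq:transideps}. Fixing one $(\beta,\varepsilon)$ for both $\kappa$ and $\varphi$ is the organizational key to the whole argument.

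Next I would produce the forms $F$ and $\varpi$ by descent. Set $\theta \df \omega - \omega' + \delta_1^{*}\beta \in \Omega^2(E^{[2]})$. Since the difference maps $\delta_{\bullet}$ form a simplicial map, $\delta^{*}$ intertwines the bar differential on $G^{\bullet}$ with the \v Cech differential on $E^{[\bullet]}$, so $\delta_2^{*}\Delta\beta \eq \Delta\delta_1^{*}\beta$; combined with $\Delta\omega \eq -\delta_2^{*}\rho$, $\Delta\omega' \eq -\delta_2^{*}\rho'$ and $\rho \eq \rho'+\Delta\beta$ this gives $\Delta\theta \eq 0$. By the exactness of the simplicial--de Rham complex of $E \to M$ used in \cite[Section 8]{murray} there is $\varpi \in \Omega^2(E)$ with $\theta \eq \Delta\varpi$. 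Using $\mathrm{curv}(\mathcal{G}^{\lambda}) \eq \mathrm{curv}(\mathcal{G}^{\lambda'}) + \mathrm{d}\beta$ (from $\id^{\varepsilon}$ being connection-preserving) together with $\Delta C \eq \delta_1^{*}\mathrm{curv}(\mathcal{G}^{\lambda}) + \mathrm{d}\omega$ and its primed analogue, one computes $\Delta(C - C' - \mathrm{d}\varpi) \eq 0$, so $C - C' - \mathrm{d}\varpi \eq \zeta^{*}F$ descends to a unique $F \in \Omega^3(M,\mathfrak{a})$.

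With these in hand I would take $\kappa \df (\varpi,\delta_1^{*}\varepsilon)$ as the connection on $\id\maps \mathbb{L}_E \to \mathbb{L}_E$ and check that $\id^{\kappa}\maps \mathbb{L}_E^{\chi} \to \mathbb{L}_E^{\chi'} \otimes \mathbb{I}_F$ is connection-preserving. This amounts to three identities: over $E$ the 3-form equation $C \eq C' + \zeta^{*}F + \mathrm{d}\varpi$, which holds by construction of $F$; over $E^{[2]}$ the statement that $\id^{\delta_1^{*}\varepsilon}$ sends $\mathcal{H}_{\lambda,\omega} \eq \delta_1^{*}\mathcal{G}^{\lambda} \otimes \mathcal{I}_{\omega}$ to $\mathcal{H}_{\lambda',\omega'} \otimes \mathcal{I}_{\Delta\varpi}$ in a connection-preserving way, which is exactly the identity $\delta_1^{*}\beta + \omega - \omega' \eq \Delta\varpi$ defining $\varpi$; and over $E^{[3]}$ the product compatibility, which follows by pulling back along $\delta_2,\delta_3$ the connection-preserving square and associator already established in Lemma \ref{lem:1}.

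Finally --- and this is where I expect the real work --- I would build the required 2-isomorphism and verify compatibility with the bundle gerbe products. On the level of principal $A$-bundles over $LE^{[2]}$ the isomorphism $\phi_{\omega}$ of Lemma \ref{lem:2} is the canonical trivialization $t_{\omega}$ followed by $\mathscr{T}$ of the defining identification, $\mathcal{L}_{\varphi}$ is $\delta_1^{*}\varphi_{\varepsilon} \eq \delta_1^{*}\big((\id\otimes t_{\beta})\circ\mathscr{T}_{\id_{\varepsilon}}\big)$, and $\mathscr{T}_{\id^{\kappa}}$ is $t_{\Delta\varpi}$ followed by $\mathscr{T}_{\id^{\delta_1^{*}\varepsilon}}$. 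Since $\mathscr{T}$ is a monoidal 2-functor and the canonical trivializations of transgressed trivial gerbes are compatible with tensor products, both composites $\mathcal{L}_{\varphi}\circ\phi_{\omega}$ and $\phi_{\omega'}\circ\mathscr{T}_{\id^{\kappa}}$ reduce to $\mathscr{T}$ of the same bundle gerbe isomorphism $\delta_1^{*}\id^{\varepsilon}$ post-composed with trivializations, the two packagings of which differ only by the coherence isomorphism accounting for $\delta_1^{*}\beta + \omega - \omega' \eq \Delta\varpi$; transgressing this coherence yields the desired 2-isomorphism. Its compatibility with the products $\mathscr{T}_{\mathcal{N}_{\eta}}$ and $\delta_2^{*}\mathscr{T}_{\mathcal{M}^{\eta}}$ is inherited from the definition of $\mathcal{N}_{\eta}$ and the product diagram of Lemma \ref{lem:1}, because $\varphi$ was chosen multiplicative. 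The main obstacle is precisely this bookkeeping: keeping the two canonical-trivialization packagings aligned while tracking that one and the same $(\beta,\varepsilon)$ underlies both $\varphi$ and $\kappa$, so that no residual factor $\exp(2\pi\im\int_{\tau}\xi)$ (as in Proposition \ref{prop:loopgroupindep}) obstructs the 2-isomorphism.
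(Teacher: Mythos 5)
Your proposal is correct and follows essentially the same route as the paper: fix one pair $(\beta,\varepsilon)$ from Lemma \ref{lem:1} so that $\varphi\eq t_{\beta}\circ\mathscr{T}_{\id_{\varepsilon}}$, solve $\Delta\varpi\eq\omega-\omega'+\delta_1^{*}\beta$ by exactness of the fundamental complex of $E\to M$, let $F$ be the descent of $C-C'-\mathrm{d}\varpi$, build $\id^{\kappa}$ from $\mathcal{I}_{\varpi}$ and $\delta_1^{*}\id_{\varepsilon}$, and take the 2-isomorphism to be the canonical trivialization of $\mathscr{T}_{\mathcal{I}_{\varpi}}$ with the relation $\Delta\varpi\eq\omega-\omega'+\delta_1^{*}\beta$ supplying the compatibility. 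Your write-up is somewhat more explicit than the paper's (which compresses the final square-filling into one sentence), but the ideas and their order coincide, up to an immaterial sign convention in $\varpi$ versus the paper's $\kappa$.
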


\begin{proof}
We let $\id_{\varepsilon}\maps  \mathcal{G}^{\lambda} \to \mathcal{G}^{\lambda'} \otimes \mathcal{I}_{\beta}$ be a connection-preserving 1-isomorphism as in Lemma \ref{lem:1}, so that $\varphi \eq t_{\beta} \circ \mathscr{T}_{\id_{\varepsilon}}$. We recall that $\rho' + \Delta\beta \eq \rho$, and calculate that $\Delta(\omega'-\omega + \delta_2^{*}\beta)\eq0$. Thus, there exists $\kappa \in \Omega^2(E,\mathfrak{a})$ such that 
\begin{equation}
\label{eq:kappa}
\Delta\kappa \eq \omega'-\omega + \delta_2^{*}\beta\text{.}
\end{equation}
We use $\kappa$ to construct the 1-isomorphism $\id^{\kappa}$. 
It consists of the trivial bundle gerbe $\mathcal{I}_{\kappa}$ over $E$. We define $F$ such that
\begin{equation*}
\mathrm{d}\kappa \eq \pi^{*}F + C' - C\text{,} 
\end{equation*}
 which is the required compatibility condition for the 3-curvings. 
The 1-isomorphism $\id_{\kappa}$ consists further of the connection-preserving 1-isomorphism
\begin{equation*}
\alxydim{@C=2cm}{\mathrm{pr}_1^{*}\mathcal{I}_{\kappa} \otimes \mathcal{H}_{\lambda',\omega'} \eq \mathcal{I}_{\mathrm{pr}_1^{*}\kappa + \omega'} \otimes \delta_1^{*}\mathcal{G}^{\lambda'} \ar[r]^-{\id \otimes \delta_1^{*}\mathcal{\id}_{\varepsilon}} & \mathcal{I}_{\mathrm{pr}_1^{*}\kappa + \omega'} \otimes \delta_1^{*}\mathcal{G}^{\lambda'} \otimes \mathcal{I}_{\delta_1^{*}\beta} \eq \mathcal{H}_{\lambda,\omega} \otimes \mathrm{pr}_2^{*}\mathcal{I}_{\kappa}
\text{.}}
\end{equation*}
The latter satisfies the higher coherence conditions because of the commutative diagram in Lemma \ref{lem:1}. The 2-isomorphism is given by the trivialization $t_{\kappa}$ of $\mathscr{T}_{\mathcal{I}_{\kappa}}$; \erf{eq:kappa} provides the necessary compatibility relation.
\end{proof}

Now we return to a smoothly separable Lie 2-group $\Gamma$ with $\upi_0\Gamma$ compact and connected, and look at the situation where $G \df \upi_0\Gamma$, $A \df \upi_1\Gamma$, and $\mathcal{G} \df \mathcal{G}_{\Gamma}$. Then, Lemma \ref{lem:2} yields the isomorphism $\phi_{E,\Gamma}$ claimed in Proposition \ref{prop:translifting} (a), and Lemma \ref{lem:3} ensures that it is well-defined under different choices of connections.
Next we consider  a Lie 2-group homomorphism $\Lambda\maps \Gamma \to \Omega$, and write $\widetilde{\mathcal{G}} \df (\upi_1\Lambda)_{*}(\mathcal{G}_\Gamma)$. Let 
\begin{equation}
\label{eq:multiso2}
\mathcal{D}\maps \widetilde{\mathcal{G}} \to (\upi_0\Lambda)^{*}\mathcal{G}_\Omega
\end{equation}
be the 1-isomorphism  \erf{eq:multiso} defined by $\Lambda$. 
Let $(\lambda',\eta')$ be a connection on $\mathcal{G}_{\Omega}$. By Lemma \ref{lem:multconnind} there exists a multiplicative connection $(\tilde\lambda,\tilde\eta)$ on $\widetilde{\mathcal{G}}$
and a connection $\varepsilon$ on $\mathcal{D}$ such that $\mathcal{D}^{\varepsilon}$ is connection-preserving. 
Let $E$ be a principal $\upi_0\Gamma$-bundle over $M$, and let $E' \df (\upi_0\Lambda)_{*}(E)$. There is a canonical map $f\maps  E \to E'$ which is equivariant along $\upi_0\Lambda$. This can be rephrased as the commutativity of the diagram
\begin{equation}
\label{eq:diffcomm}
\alxydim{@=1.2cm}{E^{[n+1]} \ar[d]_{f^{n+1}} \ar[r]^{\delta_n} & \upi_0\Gamma^{n} \ar[d]^{\upi_0\Lambda} \\  E'^{[n+1]} \ar[r]_{\delta_n} & \upi_0\Omega^n}
\end{equation}
for all $n$. We get an induced isomorphism 
\begin{equation*}
\mathcal{L}_{\mathscr{T}_{\mathcal{D}^{\varepsilon}}}\maps   \mathcal{L}_{LE,L\widetilde{\mathcal{G}}^{\tilde\lambda,\tilde\eta}} \to \mathcal{L}_{LE',L\mathcal{G}_{\Omega}^{\lambda',\eta'}} 
\end{equation*} 
Let $(C',\omega')$ be an extension of $(\lambda',\eta')$ to a connection $\chi'$ on $\mathbb{L}_{E'}$. Then, $\tilde C \df (\pi_0\Lambda)^{*}C'$ and $\tilde \omega \df (\pi_0\Lambda)^{*}\omega'$ is an extension of $(\tilde\lambda,\tilde\eta)$ to a connection $\tilde\chi$ on $\widetilde{\mathbb{L}_E}$, the lifting bundle 2-gerbe formed by the $\upi_0\Gamma$-bundle $E$ and the $\mathcal{B}\upi_1\Omega$-bundle gerbe $\widetilde{\mathcal{G}}$ over $\upi_0\Gamma$. The commutativity of \erf{eq:diffcomm} implies that the 1-isomorphism $\mathcal{D}^{\varepsilon}$  induces a connection-preserving 1-isomorphism
\begin{equation*}
\mathbb{L}_{\mathcal{D}^{\varepsilon}}\maps  \widetilde{\mathbb{L}_E}^{\tilde\chi} \to \mathbb{L}_{E'}^{\chi'}\text{.}
\end{equation*}
It is straightforward to check that the diagram
\begin{equation}
\label{eq:lem4}
\alxydim{@C=1.8cm@R=1.2cm}{\mathscr{T}_{\widetilde{\mathbb{L}_E}^{\tilde\chi}} \ar[d]_{\mathscr{T}_{\mathbb{L}_{\mathcal{D}^{\varepsilon}}}} \ar[r]^-{\phi_{\tilde \omega}} & \mathcal{L}_{LE,L\widetilde{\mathcal{G}}^{\tilde\lambda,\tilde\eta}} \ar@{=>}[dl] \ar[d]^{\mathcal{L}_{\mathscr{T}_{\mathcal{D}^{\varepsilon}}}} \\ \mathscr{T}_{\mathbb{L}_{E'}^{\chi'}} \ar[r]_{\phi_{\omega'}} & \mathcal{L}_{LE',L\mathcal{G}_{\Omega}^{\lambda',\eta'}}}
\end{equation} 
of 1-isomorphisms 
between $\mathcal{B}\upi_1\Omega$-bundle gerbes over $LM$ is strictly commutative.

\begin{lemma}
\label{lem:5}
Let $\Lambda\maps \Gamma \to \Omega$ be a Lie 2-group homomorphism, let $(\lambda,\eta)$ and $(\lambda',\eta')$ be connections on $\mathcal{G}_{\Gamma}$ and $\mathcal{G}_{\Omega}$, and let $\chi$ and $\chi'$ be extensions to connections on $\mathbb{L}_E$ and $\mathbb{L}_{E'}$, respectively. Then, there exists a 3-form $F \in \Omega^3(M,\mathfrak{a})$, a connection-preserving isomorphism
\begin{equation*}
\mathbb{C}^{\rho}\maps  (\upi_1\Lambda)_{*} (\mathbb{L}_E^{\chi}) \to \mathbb{L}_E^{\chi'} \otimes \mathbb{I}_F\text{,}
\end{equation*}
and a 2-isomorphism
\begin{equation*}
\alxydim{@C=1.8cm@R=1.2cm}{(\upi_1\Lambda)_{*} (\mathscr{T}_{\mathbb{L}_E^{\chi}}) \ar[d]_{\mathscr{T}_{\mathbb{C}^{\rho}}} \ar[r]^-{(\upi_1\Lambda)_{*} (\phi_{\omega})} & (\upi_1\Lambda)_{*} (\mathcal{L}_{LE,L\mathcal{G}_{\Gamma}^{\lambda,\eta}}) \ar@{=>}[dl] \ar[d]^{\mathcal{L}_{\varphi}} \\ \mathscr{T}_{\mathbb{L}_{E'}^{\chi'}} \ar[r]_-{\phi_{\omega'}} & \mathcal{L}_{LE,L\mathcal{G}_{\Omega}^{\lambda',\eta'}}}
\end{equation*}
between bundle gerbe isomorphisms over $LM$. 
\end{lemma}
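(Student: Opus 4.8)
The plan is to factor the comparison into a pure change-of-connection step on the single multiplicative gerbe $\widetilde{\mathcal{G}} \df (\upi_1\Lambda)_{*}(\mathcal{G}_{\Gamma})$, handled by Lemma \ref{lem:3}, followed by the homomorphism step already recorded in the strictly commutative square \erf{eq:lem4}. On $\widetilde{\mathcal{G}}$ there are two multiplicative connections in play: the pushforward $(\upi_1\Lambda)_{*}(\lambda,\eta)$, and the adapted connection $(\tilde\lambda,\tilde\eta)$ chosen before the lemma so that $\mathcal{D}^{\varepsilon}$ is connection-preserving. Correspondingly, on the lifting bundle 2-gerbe $\widetilde{\mathbb{L}_E}$ there are two extensions: the pushforward $(\upi_1\Lambda)_{*}\chi$, for which $\widetilde{\mathbb{L}_E}^{(\upi_1\Lambda)_{*}\chi} \eq (\upi_1\Lambda)_{*}(\mathbb{L}_E^{\chi})$, and $\tilde\chi \df ((\upi_0\Lambda)^{*}C',(\upi_0\Lambda)^{*}\omega')$. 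The whole content of the lemma is to bridge these two.

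First I would apply Lemma \ref{lem:3} verbatim to the gerbe $\widetilde{\mathcal{G}}$ with the two multiplicative connections $(\upi_1\Lambda)_{*}(\lambda,\eta)$ and $(\tilde\lambda,\tilde\eta)$ and the two extensions $(\upi_1\Lambda)_{*}\chi$ and $\tilde\chi$. This produces a $3$-form $F \in \Omega^{3}(M,\mathfrak{a})$, a connection $\kappa$ on $\id\maps \widetilde{\mathbb{L}_E} \to \widetilde{\mathbb{L}_E}$ giving a connection-preserving isomorphism $\id^{\kappa}\maps (\upi_1\Lambda)_{*}(\mathbb{L}_E^{\chi}) \to \widetilde{\mathbb{L}_E}^{\tilde\chi} \otimes \mathbb{I}_F$, and a $2$-isomorphism filling the upper square whose right edge is the change-of-connection equivalence $\mathcal{L}_{\varphi_{\kappa}}$ of Proposition \ref{prop:loopgroupindep} and whose bottom edge is $\phi_{\tilde\omega}$. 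The only point requiring care here is to identify the top edge, the $\phi_{\omega}$ of Lemma \ref{lem:3} for $\widetilde{\mathcal{G}}$, with $(\upi_1\Lambda)_{*}(\phi_{\omega})$; this is exactly the covariance of transgression in the Lie group, which furnishes the canonical identification $L\widetilde{\mathcal{G}}^{(\upi_1\Lambda)_{*}(\lambda,\eta)} \cong (\upi_1\Lambda)_{*}(L\mathcal{G}_{\Gamma}^{\lambda,\eta})$ already exploited in the proof of Proposition \ref{prop:loopgroupcov}.

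Second I would glue this upper square on top of the strictly commutative square \erf{eq:lem4}, whose top edge is also $\phi_{\tilde\omega}$, whose left edge is $\mathscr{T}_{\mathbb{L}_{\mathcal{D}^{\varepsilon}}}$, whose bottom edge is $\phi_{\omega'}$, and whose right edge is $\mathcal{L}_{\mathscr{T}_{\mathcal{D}^{\varepsilon}}}$. Setting $\mathbb{C}^{\rho} \df (\mathbb{L}_{\mathcal{D}^{\varepsilon}} \otimes \id_{\mathbb{I}_F}) \circ \id^{\kappa}$ — connection-preserving because $\mathbb{L}_{\mathcal{D}^{\varepsilon}}$ is and $\id^{\kappa}$ absorbs the extra curving into $\mathbb{I}_F$ — realizes the left edge of the total square as $\mathscr{T}_{\mathbb{C}^{\rho}} = \mathscr{T}_{\mathbb{L}_{\mathcal{D}^{\varepsilon}}} \circ \mathscr{T}_{\id^{\kappa}}$, and the right edge as the composite $\mathcal{L}_{\mathscr{T}_{\mathcal{D}^{\varepsilon}}} \circ \mathcal{L}_{\varphi_{\kappa}}$. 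The vertical pasting of the two $2$-isomorphisms is then the required filler for the diagram of the lemma.

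The main obstacle is purely bookkeeping: one must check that the right edge $\mathcal{L}_{\varphi}$ appearing in the statement, induced by the homomorphism $L\Lambda$ of Proposition \ref{prop:loopgroupcov}, really does coincide with the composite $\mathcal{L}_{\mathscr{T}_{\mathcal{D}^{\varepsilon}}} \circ \mathcal{L}_{\varphi_{\kappa}}$. This is precisely the factorization of $L\Lambda$ into a change-of-connection equivalence followed by the transgressed multiplicative isomorphism $\mathscr{T}_{\mathcal{D}^{\varepsilon}}$ that was established in the proof of Proposition \ref{prop:loopgroupcov}; applying the functor $\mathcal{L}_{-}$ on lifting bundle gerbes, which is natural for Lie group homomorphisms of the structure group, carries that factorization to $\mathcal{L}_{LE,L\mathcal{G}_{\Gamma}}$ and $\mathcal{L}_{LE',L\mathcal{G}_{\Omega}}$. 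Everything else — the compatibility of the whiskerings and the identification of the curvings — reduces to the verifications already carried out in Lemmata \ref{lem:2} and \ref{lem:3}.
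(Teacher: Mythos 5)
Your proposal is correct and follows essentially the same route as the paper: apply Lemma \ref{lem:3} to the two multiplicative connections $(\upi_1\Lambda)_{*}(\lambda,\eta)$ and $(\tilde\lambda,\tilde\eta)$ on $\widetilde{\mathcal{G}}$ (and the corresponding two extensions on $\widetilde{\mathbb{L}_E}$) to obtain the upper square, then paste it onto the strictly commutative square \erf{eq:lem4}. Your explicit formula $\mathbb{C}^{\rho}=(\mathbb{L}_{\mathcal{D}^{\varepsilon}}\otimes\id)\circ\id^{\kappa}$ and the check that the right edges compose to the $\mathcal{L}_{\varphi}$ induced by $L\Lambda$ merely spell out details the paper leaves implicit.
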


\begin{proof}
On the multiplicative bundle gerbe $\widetilde{\mathcal{G}} \eq (\upi_1\Lambda)_{*}(\mathcal{G}_{\Gamma})$ we have two connections: $(\tilde\lambda,\tilde\eta)$ and $(\upi_1\Lambda)_{*}(\lambda,\eta)$. Accordingly, on $\widetilde{\mathbb{L}_E} \eq (\upi_1\Lambda)_{*}(\mathbb{L}_E)$ we have the two extensions $(\tilde C,\tilde\omega)$ and $(\upi_1\Lambda)_{*}(C,\omega)$ to connections $\tilde\chi$ and $(\upi_1\Lambda)_{*}\chi$. Thus, Lemma \ref{lem:3} applies, and provides a 2-isomorphism
\begin{equation*}
\alxydim{@C=2cm@R=1.2cm}{(\upi_1\Lambda)_{*} (\mathscr{T}_{\mathbb{L}_E^{\chi}}) \ar[d]_{\mathscr{T}_{\id^{\kappa}}} \ar[r]^-{(\upi_1\Lambda)_{*}(\phi_{\omega})} & (\upi_1\Lambda)_{*} (\mathcal{L}_{LE,L\mathcal{G}_{\Gamma}^{\lambda,\eta}}) \ar@{=>}[dl] \ar[d]^{\mathcal{L}_{\varphi}} \\ \mathscr{T}_{\widetilde{\mathbb{L}_E}^{\tilde\chi}} \ar[r]_-{\phi_{\tilde\omega}} & \mathcal{L}_{LE,L\widetilde{\mathcal{G}}^{\tilde\lambda,\tilde\eta}}\text{.}}
\end{equation*}
Now the commutative diagram \erf{eq:lem4} extends this 2-isomorphism to the claimed one.
\end{proof}

Lemma \ref{lem:5} proves Proposition \ref{prop:translifting} (b).

\subsection{Transgression of Trivializations}

Now we come to the trivialization $\mathbb{T}_{\mathcal{G}}$ of $\mathbb{L}_{E}$ associated to the $\Gamma$-bundle gerbe $\mathcal{G}$. Since we have equipped $\mathbb{L}_E$ with a connection $\chi$, it follows that there exists a compatible connection $\rho$ on the trivialization $\mathbb{T}_{\mathcal{G}}$ \cite[Proposition 3.3.1]{waldorf8}. If $\mathbb{T}_{\mathcal{G}}$ consists of a $\mathcal{B}\upi_1\Gamma$-bundle gerbe $\mathcal{S}$ over $E$, of a 1-isomorphism 
\begin{equation*}
\mathcal{C}\maps  \mathrm{pr}_1^{*}\mathcal{S} \otimes \delta_1^{*}\mathcal{G}_{\Gamma} \to \mathrm{pr}_2^{*}\mathcal{S}
\end{equation*}
over $E^{[2]}$, and of a 2-isomorphism $\zeta$ over $E^{[3]}$, the connection $\rho$ is a pair $\rho\eq(\gamma,\nu)$ of a connection $\gamma$ on $\mathcal{S}$ and of a connection $\nu$ on $\mathcal{C}$, such that $\mathcal{C}$ and $\zeta$ are connection-preserving.
As described in \cite[Section 4.2]{waldorf8}, the trivialization $\mathbb{T}_{\mathcal{G}}^{\rho}$ with connection can be transgressed to a trivialization $\mathscr{T}_{\mathbb{T}_{\mathcal{G}}^{\rho}}$ of $\mathscr{T}_{\mathbb{L}_E^{\chi}}$. It consists of:
\begin{enumerate}[(i)]

\item 
the principal $\mathcal{B}\upi_1\Gamma$-bundle $\mathscr{T}_{\mathcal{S}^{\gamma}}$ over $LE$,

\item
the bundle isomorphism
\begin{equation*}
\mathscr{T}_{\mathcal{C}^{\nu}}\maps  \mathrm{pr}_1^{*}\mathscr{T}_{\mathcal{S}^{\gamma}} \otimes \mathscr{T}_{\mathcal{H}_{\lambda,\omega}} \to \mathrm{pr}_2^{*}\mathscr{T}_{\mathcal{S}^{\gamma}}
\end{equation*}
over $LE^{[2]}$, which is compatible with the bundle gerbe product $\mathscr{T}_{\mathcal{N}^{\eta}}$ due to the existence of the  2-isomorphism $\zeta$.

\end{enumerate} 
Under the canonical identification of  Proposition \ref{prop:translifting} and the equivalence \erf{eq:liftingtheory},  $\mathscr{T}_{\mathbb{T}_{\mathcal{G}}^{\rho}}$  determines a  principal $\mathcal{B}L\mathcal{G}_{\Gamma}^{\lambda,\eta}$-bundle over $LM$, which we denote by $\mathscr{T}_{\mathcal{G}}^{\chi,\rho}$.

\begin{lemma}
\label{lem:6}
The $\mathcal{B}L\mathcal{G}^{\lambda,\eta}$-bundle $\mathscr{T}_{\mathcal{G}}^{\chi,\rho}$ over $LM$ is independent of the choice of the connection $\rho$ up to bundle isomorphisms.
\end{lemma}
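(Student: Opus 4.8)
The plan is to imitate the argument of Proposition~\ref{prop:loopgroupindep}, but now comparing two connections on the trivialization $\mathbb{T}_{\mathcal{G}}$ rather than two multiplicative connections on $\mathcal{G}$, while keeping the connection $\chi$ on $\mathbb{L}_E$ fixed. Let $\rho\eq(\gamma,\nu)$ and $\rho'\eq(\gamma',\nu')$ be two connections on $\mathbb{T}_{\mathcal{G}}$ that are both compatible with $\chi$. The underlying geometric data $\mathcal{S}$, $\mathcal{C}$, $\zeta$ of the trivialization are the same for both, so $\gamma,\gamma'$ are two connections on the single bundle gerbe $\mathcal{S}$ and $\nu,\nu'$ two connections on the single $1$-isomorphism $\mathcal{C}$. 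First I would record, following \cite[Lemma 3.3.5]{waldorf8} exactly as in Lemma~\ref{lem:1}, that $\gamma$ and $\gamma'$ differ by a pair $(\psi,\epsilon)$ consisting of a $2$-form $\psi\in\Omega^2(E,\mathfrak{a})$ and a connection $\epsilon$ on the identity, so that $\id^{\epsilon}\maps\mathcal{S}^{\gamma}\to\mathcal{S}^{\gamma'}\otimes\mathcal{I}_{\psi}$ is connection-preserving. The requirement that both $\rho$ and $\rho'$ be compatible with the \emph{same} $\chi$ -- in particular that $\mathcal{C}^{\nu}$ and $\mathcal{C}^{\nu'}$ be connection-preserving with respect to the fixed $\mathcal{H}_{\lambda,\omega}$ and the fixed curvings -- forces a descent condition $\Delta\psi\eq0$ over $E^{[\bullet]}$ together with a compatible relation between $\nu'-\nu$ and $\epsilon$, by the same bookkeeping as in the proof of Lemma~\ref{lem:1}.

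Next I would transgress. Applying $\mathscr{T}$ to $\id^{\epsilon}$ and composing with the canonical trivialization $t_{\psi}$ of $\mathscr{T}_{\mathcal{I}_{\psi}}$ produces, exactly as in \erf{eq:transideps}, a bundle isomorphism
\[
\Phi \df (\id\otimes t_{\psi})\circ\mathscr{T}_{\id^{\epsilon}}\maps\mathscr{T}_{\mathcal{S}^{\gamma}}\to\mathscr{T}_{\mathcal{S}^{\gamma'}}
\]
over $LE$. The key step is to check that $\Phi$ intertwines the two bundle gerbe products $\mathscr{T}_{\mathcal{C}^{\nu}}$ and $\mathscr{T}_{\mathcal{C}^{\nu'}}$ of the transgressed trivializations; this is where the descent condition $\Delta\psi\eq0$ and the functoriality of the canonical trivializations $t$ enter, in complete analogy with the commuting square established in the proof of Proposition~\ref{prop:loopgroupindep}. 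Granting this, $\Phi$ is an isomorphism of trivializations $\mathscr{T}_{\mathbb{T}_{\mathcal{G}}^{\rho}}\cong\mathscr{T}_{\mathbb{T}_{\mathcal{G}}^{\rho'}}$ of the fixed $\mathcal{B}\upi_1\Gamma$-bundle gerbe $\mathscr{T}_{\mathbb{L}_E^{\chi}}$ over $LM$.

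Finally I would transport this along the two identifications that define $\mathscr{T}_{\mathcal{G}}^{\chi,\rho}$. Under the canonical isomorphism $\mathscr{T}_{\mathbb{L}_E^{\chi}}\cong\mathcal{L}_{LE,L\mathcal{G}^{\lambda,\eta}}$ of Proposition~\ref{prop:translifting}~(a) and the equivalence \erf{eq:liftingtheory}, an isomorphism of trivializations of the lifting bundle gerbe corresponds to an isomorphism of the associated principal $L\mathcal{G}^{\lambda,\eta}$-bundles over $LM$. Hence $\Phi$ yields the desired isomorphism $\mathscr{T}_{\mathcal{G}}^{\chi,\rho}\cong\mathscr{T}_{\mathcal{G}}^{\chi,\rho'}$, which proves the independence up to bundle isomorphism. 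Note that, because the statement only asserts existence of an isomorphism rather than a distinguished homotopy class, one need not track the holonomy-type correction $\tau\mapsto\exp(2\pi\im\int_{\tau}\xi)$ that appeared at the end of Proposition~\ref{prop:loopgroupindep}, which makes the present argument slightly shorter. The main obstacle is precisely the compatibility of $\Phi$ with the bundle gerbe products in the second paragraph: here one must verify the intertwining over $LE^{[2]}$ and confirm that the descent condition on $\psi$ is exactly what makes $t_{\psi}$ cancel against the gerbe product $\mathscr{T}_{\mathcal{N}_{\eta}}$, so that $\Phi$ respects the lifting structure and not merely the underlying $A$-bundles.
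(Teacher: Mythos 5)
Your overall architecture is reasonable and genuinely different from the paper's: you unpack the trivialization into its local data $(\mathcal{S},\mathcal{C},\zeta)$ and compare the connections piece by piece, whereas the paper works one categorical level up, regarding $\mathbb{T}_{\mathcal{G}}^{\rho}\maps\mathbb{L}_E^{\chi}\to\mathbb{I}_H$ and $\mathbb{T}_{\mathcal{G}}^{\rho'}\maps\mathbb{L}_E^{\chi}\to\mathbb{I}_{H'}$ as 1-isomorphisms into (generally \emph{different}) trivial 2-gerbes and compensating their difference by a connection $\xi$ on $\id\maps\mathbb{I}\to\mathbb{I}$ with $\mathrm{d}\xi\eq H'-H$; the resulting 2-isomorphism transgresses to the desired isomorphism of trivializations because $\id^{\xi}$ transgresses to the identity.

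However, there is a genuine gap exactly at the step you yourself single out as the main obstacle. Compatibility of both $\rho$ and $\rho'$ with the \emph{same} $\chi$ does not force $\Delta\psi\eq0$. What it forces is only that $\mathrm{d}\psi$ descends to $M$ (it is the pullback of $H'-H$) and that $\Delta\psi$ is \emph{exact} on $E^{[2]}$: since $\mathcal{C}^{\nu}$ and $\mathcal{C}^{\nu'}$ share the underlying 1-isomorphism $\mathcal{C}$, the connections $\nu$ and $\nu'$ differ by a 1-form $\sigma$ with $\mathrm{d}\sigma\eq\Delta\psi$ (up to the contribution of $\epsilon$), not by zero. Consequently $\mathrm{pr}_1^{*}t_{\psi}$ and $\mathrm{pr}_2^{*}t_{\psi}$ over $LE^{[2]}$ do not agree; they differ by the function $\tau\mapsto\exp\left(2\pi\im\int_{\tau}\sigma\right)$, and it is precisely this factor that must cancel against the discrepancy between $\mathscr{T}_{\mathcal{C}^{\nu}}$ and $\mathscr{T}_{\mathcal{C}^{\nu'}}$ in the intertwining check over $LE^{[2]}$. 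So the holonomy-type correction you declare you may ignore is not merely an ambiguity of the final isomorphism: it is the mechanism by which $\Phi$ becomes compatible with the bundle gerbe products, and without it your central verification rests on a condition ($\Delta\psi\eq0$) that fails in general. With that bookkeeping inserted your argument closes; the paper's route avoids it entirely by absorbing $\psi$, $\sigma$, and $H'-H$ into a single connection on the identity 1-morphism of the trivial 2-gerbe and quoting the general theory of connections on trivializations.
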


\begin{proof}
We may regard the trivialization as an isomorphism $\mathbb{T}_{\mathcal{G}}^{\rho}\maps  \mathbb{L}_E^{\chi} \to \mathbb{I}_H$, for some $H \in \Omega^3(M)$. Another connection  $\rho'$ corresponds to another isomorphism $\mathbb{T}_{\mathcal{G}}^{\rho'}\maps  \mathbb{L}_E^{\chi} \to \mathbb{I}_{H'}$. The difference between $\mathbb{T}_{\mathcal{G}}^{\rho}$ and $\mathbb{T}_{\mathcal{G}}^{\rho'}$ can be compensated in a connection on the identity isomorphism $\id\maps \mathbb{I}\to \mathbb{I}$. Up to 2-isomorphisms, such a connection is given by a 2-form $\xi$ with $\mathrm{d}\xi \eq H'-H$. Then, there exists a 2-isomorphism
\begin{equation*}
\alxydim{@R=1.2cm}{& \mathbb{L}_E^{\chi} \ar[dl]_{\mathbb{T}_{\mathcal{G}}^{\rho}} \ar[dr]^{\mathbb{T}_{\mathcal{G}}^{\rho'}}^{}="1" \ar@{=>}[dl];"1" & \\ \mathbb{I}_{H} \ar[rr]_{\id^{\xi}} && \mathbb{I}_{H'}\text{.}}
\end{equation*}
Since $\id^{\xi}$ transgresses to the identity between trivial gerbes over $LM$, the 2-isomorphism transgresses to an isomorphism between trivializations of $\mathscr{T}_{\mathbb{L}_E^{\chi}}$.
\end{proof}

\begin{lemma}
\label{lem:7}
Let $\Lambda\maps \Gamma \to \Omega$ be a Lie 2-group homomorphism, let $(\lambda,\eta)$ and $(\lambda',\eta')$ be connections on $\mathcal{G}_{\Gamma}$ and $\mathcal{G}_{\Omega}$, and let $\chi$ and $\chi'$ be extensions to connections on $\mathbb{L}_E$ and $\mathbb{L}_{E'}$, respectively. Let $\rho$ and $\rho'$ be connections on $\mathbb{T}_{\mathcal{G}}$ compatible with $\chi$ and $\chi'$, respectively. Then, there exists an isomorphism 
\begin{equation*}
(L\Lambda)_{*}(\mathscr{T}_{\mathcal{G}}^{\chi,\rho}) \cong \mathscr{T}_{\Lambda_{*}(\mathcal{G})}^{\chi',\rho'}\text{.}
\end{equation*}
\end{lemma}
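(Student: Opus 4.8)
The plan is to reduce the statement to the naturality already packaged in Lemma \ref{lem:5}, together with the functoriality of the lifting-theory equivalence \erf{eq:liftingtheory} in the structure group and the functoriality of abelian transgression of trivializations in the abelian coefficient group. Recall that $\mathscr{T}_{\mathcal{G}}^{\chi,\rho}$ is built in three stages: one transgresses the trivialization $\mathbb{T}_{\mathcal{G}}^{\rho}$ of $\mathbb{L}_E^{\chi}$ to a trivialization $\mathscr{T}_{\mathbb{T}_{\mathcal{G}}^{\rho}}$ of $\mathscr{T}_{\mathbb{L}_E^{\chi}}$, transports it along the canonical isomorphism $\phi_{\omega}$ of Lemma \ref{lem:2} to a trivialization of $\mathcal{L}_{LE,L\mathcal{G}_{\Gamma}^{\lambda,\eta}}$, and finally reads off the corresponding principal $L\Gamma$-bundle through the equivalence \erf{eq:liftingtheory}. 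The object $\mathscr{T}_{\Lambda_*(\mathcal{G})}^{\chi',\rho'}$ is obtained by the identical procedure applied to the $\Omega$-bundle gerbe $\Lambda_*(\mathcal{G})$, which is an $\Omega$-lift of $E' \df (\upi_0\Lambda)_*(E)$. I would therefore show that each of these three stages intertwines extension along $L\Lambda$ with extension along $\Lambda$, and then compose the resulting isomorphisms.

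The first and key step is a naturality statement at the level of the lifting 2-gerbe. Viewing $\mathcal{G}$ as a $\Gamma$-lift of $E$, the equivalence of Theorem \ref{thm:lift} sends it to the trivialization $\mathbb{T}_{\mathcal{G}}$, and likewise sends $\Lambda_*(\mathcal{G})$ to $\mathbb{T}_{\Lambda_*(\mathcal{G})}$. Because the construction in Section \ref{sec:proof} is built from the \emph{2-functorial} translations of Theorem \ref{reduction} and Lemma \ref{hilfslemma}, and from data ($\mathcal{G}_{\Gamma}$, the extension functors) that is itself natural in the Lie 2-group, this equivalence is natural in $\Gamma$; concretely, under the isomorphism $\mathbb{C}^{\rho}\maps (\upi_1\Lambda)_{*}(\mathbb{L}_E^{\chi}) \to \mathbb{L}_{E'}^{\chi'} \otimes \mathbb{I}_F$ furnished by Lemma \ref{lem:5}, the trivialization $\mathbb{T}_{\Lambda_*(\mathcal{G})}^{\rho'}$ corresponds to the extended trivialization $(\upi_1\Lambda)_{*}(\mathbb{T}_{\mathcal{G}}^{\rho})$, up to the auxiliary twist $\mathbb{I}_F$ and the choice of $\rho'$ controlled by Lemma \ref{lem:6}.

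Next I would transgress. Since abelian transgression of trivializations (in the sense of \cite[Section 4.2]{waldorf8}) is covariant in the abelian coefficient group, extension along $\upi_1\Lambda$ commutes with $\mathscr{T}$, giving $\mathscr{T}_{(\upi_1\Lambda)_{*}(\mathbb{T}_{\mathcal{G}}^{\rho})} \cong (\upi_1\Lambda)_{*}\mathscr{T}_{\mathbb{T}_{\mathcal{G}}^{\rho}}$. Applying $\mathscr{T}$ to the previous step and invoking the pasting diagram of Lemma \ref{lem:5} then replaces the transgressed comparison $\mathscr{T}_{\mathbb{C}^{\rho}}$ by the isomorphism $\mathcal{L}_{\varphi}=\mathcal{L}_{L\Lambda}$ of lifting bundle gerbes, up to a coherent 2-isomorphism. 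Finally, the equivalence \erf{eq:liftingtheory} is itself natural in the structure group: extension of a lift along the group homomorphism $L\Lambda$ corresponds precisely to postcomposition with $\mathcal{L}_{L\Lambda}$ on trivializations. Reading the whole chain through $\phi_{\omega}$, $\phi_{\omega'}$ and \erf{eq:liftingtheory} therefore exhibits $(L\Lambda)_{*}(\mathscr{T}_{\mathcal{G}}^{\chi,\rho})$ and $\mathscr{T}_{\Lambda_*(\mathcal{G})}^{\chi',\rho'}$ as the two principal $L\Omega$-bundles determined by isomorphic trivializations, hence as isomorphic bundles.

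The main obstacle is the connection bookkeeping rather than any conceptual difficulty. The isomorphism $\mathbb{C}^{\rho}$ of Lemma \ref{lem:5} and the change-of-connection isomorphism of Lemma \ref{lem:6} produce trivial-gerbe twists $\mathbb{I}_F$ and $\mathbb{I}_{\xi}$; as in the proofs of Lemmata \ref{lem:3} and \ref{lem:6}, the identity $\id^{\xi}$ transgresses to the identity between trivial gerbes over $LM$, so these twists transgress away and do not affect the isomorphism class of the resulting $L\Omega$-bundle. The remaining work is to verify that the 2-isomorphisms supplied by Lemmata \ref{lem:2}, \ref{lem:3}, \ref{lem:5}, \ref{lem:6} and by diagram \erf{eq:lem4} paste coherently, so that the final bundle isomorphism is well-defined independently of the auxiliary data $\omega,\omega',\varepsilon,\kappa,F$; this coherence is already encapsulated in the cited lemmata and reduces to a routine, if lengthy, diagram chase.
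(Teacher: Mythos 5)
Your proposal is correct and follows essentially the same route as the paper: it invokes the comparison isomorphism $\mathbb{C}^{\rho}$ of Lemma \ref{lem:5}, matches the trivialization $\mathbb{T}_{\Lambda_{*}(\mathcal{G})}^{\rho'}$ with the extended trivialization $(\upi_1\Lambda)_{*}(\mathbb{T}_{\mathcal{G}}^{\rho})$ up to a trivial-gerbe twist handled by the arguments of Lemma \ref{lem:6}, and then transgresses the resulting 2-isomorphism. Your version merely makes explicit the naturality of Theorem \ref{thm:lift} in $\Gamma$ and of the equivalence \erf{eq:liftingtheory} in the structure group, which the paper leaves implicit.
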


\begin{proof}
Let
\begin{equation*}
\mathbb{C}^{\rho}\maps  (\upi_1\Lambda)_{*} (\mathbb{L}_E^{\chi}) \to \mathbb{L}_E^{\chi'} \otimes \mathbb{I}_F\text{,}
\end{equation*}
be the isomorphism of Lemma \ref{lem:5}. With the arguments of Lemma \ref{lem:6} there exists a 2-isomorphism
\begin{equation*}
\alxydim{@C=2cm@R=1.2cm}{ (\upi_1\Lambda)_{*} (\mathbb{L}_E^{\chi}) \ar[r]^-{(\upi_1\Lambda)_{*}(\mathbb{T}_{\mathcal{G}}^{\rho})} \ar[d]_{\mathbb{C}^{\rho}}  & (\upi_1\Lambda)_{*}(\mathbb{I}_{H}) \ar@{=>}[dl] \ar[d]^{\id^{\xi}} \\ \mathbb{L}_{E'}^{\chi'} \otimes \mathbb{I}_F \ar[r]_-{\mathbb{T}_{\Lambda_{*}\mathcal{G}}^{\rho'} \otimes \id} & \mathbb{I}_{H'+F}}
\end{equation*}
Its transgression, together with Lemma \ref{lem:6}, yields the claim.
\end{proof}

Notice that Lemma \ref{lem:7} proves for $\Lambda \eq \id_{\Gamma}$ that $\mathscr{T}_{\mathcal{G}}^{\chi,\rho}$ is independent of the choices of all connections, up to bundle isomorphisms. We may hence denote it simply by $\mathscr{T}_{\mathcal{G}}$. 
 
\begin{definition}
Let $\Gamma$ be a smoothly separable Lie 2-group with $\upi_0\Gamma$ compact and connected, and let $\mathcal{G}$ be a $\Gamma$-bundle gerbe  over $M$. Then, the principal $L\Gamma$-bundle $\mathscr{T}_{\mathcal{G}}$ over $LM$ is called the \emph{transgression} of $\mathcal{G}$.
\end{definition}

Summarizing the results collected above, we have the following.

\begin{theorem}
\label{th:trans}
Let $\Gamma$ be a smoothly separable Lie 2-group with $\upi_0\Gamma$ compact and connected. Then, the assignment $\mathcal{G}\mapsto \mathscr{T}_{\mathcal{G}}$ defines a  map
\begin{equation*}
\mathscr{T}\maps  \h^1(M,\Gamma) \to \h^0(LM,\mathcal{B}L\Gamma)
\end{equation*}
with the following properties:
\begin{enumerate}[(i)]
\item 
it is contravariant in $M$ and covariant in $\Gamma$,

\item
for $\Gamma\eq\mathcal{B}A$, it reduces to the ordinary transgression homomorphism
\begin{equation*}
\tau\maps  \h^1(M,\mathcal{B}A) \to \h^0(LM,\mathcal{B}A)\text{,}
\end{equation*}

\item
for $\Gamma \eq \idmorph{G}$, it reduces to the looping of bundles
\begin{equation*}
L\maps  \h^1(M,\idmorph{G}) \to \h^0(LM,\mathcal{B}LG)\text{.} 
\end{equation*}
\end{enumerate}
\end{theorem}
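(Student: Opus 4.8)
The plan is to assemble Theorem~\ref{th:trans} from the ingredients prepared above, so that most of the genuine work has already been carried out in Lemmas~\ref{lem:6} and~\ref{lem:7} and in Proposition~\ref{prop:translifting}. First I would establish that $\mathscr{T}$ descends to a well-defined map on cohomology. By definition $\mathscr{T}_{\mathcal{G}}$ is the principal $L\Gamma$-bundle over $LM$ obtained by choosing a multiplicative connection $(\lambda,\eta)$ on $\mathcal{G}_{\Gamma}$, an extension $\chi$ to $\mathbb{L}_E$, and a compatible connection $\rho$ on the trivialization $\mathbb{T}_{\mathcal{G}}$, then transgressing $\mathbb{T}_{\mathcal{G}}^{\rho}$ and re-assembling via the identification $\phi_{E,\Gamma}$ of Proposition~\ref{prop:translifting}(a) together with the equivalence~\erf{eq:liftingtheory}. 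Lemma~\ref{lem:6} already shows independence of $\rho$, and Lemma~\ref{lem:7} with $\Lambda\eq\id_{\Gamma}$ shows independence of $(\lambda,\eta)$ and $\chi$; hence $\mathscr{T}_{\mathcal{G}}$ is a well-defined isomorphism class, i.e.\ an element of $\h^0(LM,\mathcal{B}L\Gamma)$. It then remains to observe that an isomorphism $\mathcal{G}\cong\mathcal{G}'$ of $\Gamma$-bundle gerbes induces an isomorphism $E\cong E'$ of the projected bundles together with a matching isomorphism $\mathbb{T}_{\mathcal{G}}\cong\mathbb{T}_{\mathcal{G}'}$ of trivializations; since every step of the construction is functorial, this transgresses to an isomorphism $\mathscr{T}_{\mathcal{G}}\cong\mathscr{T}_{\mathcal{G}'}$, so $\mathscr{T}$ factors through $\h^1(M,\Gamma)$.

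For property~(i), covariance in $\Gamma$ is precisely the content of Lemma~\ref{lem:7}: for a Lie 2-group homomorphism $\Lambda\maps\Gamma\to\Omega$ one has $(L\Lambda)_{*}(\mathscr{T}_{\mathcal{G}})\cong\mathscr{T}_{\Lambda_{*}(\mathcal{G})}$, which on isomorphism classes yields $(L\Lambda)_{*}\circ\mathscr{T}\eq\mathscr{T}\circ\Lambda_{*}$. Contravariance in $M$ is the one point not yet addressed by the preceding lemmas; here I would check that each building block commutes with pullback along a smooth map $g\maps N\to M$. Concretely, the lifting bundle 2-gerbe satisfies $\mathbb{L}_{g^{*}E}\cong g^{*}\mathbb{L}_{E}$ because the difference maps $\delta_n$ are natural, the abelian transgression 2-functor $\mathscr{T}$ is contravariant in the base, and the lifting bundle gerbe $\mathcal{L}_{LE,L\Gamma}$ is natural in $LE$. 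Since $L(g^{*}E)\cong(Lg)^{*}(LE)$, chasing these identifications through $\phi_{E,\Gamma}$ and~\erf{eq:liftingtheory} produces a canonical isomorphism $\mathscr{T}_{g^{*}\mathcal{G}}\cong(Lg)^{*}\mathscr{T}_{\mathcal{G}}$.

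Finally I would treat the two extremal cases by unwinding the construction. For $\Gamma\eq\idmorph G$ we have $\upi_1\Gamma\eq1$, so $\mathbb{L}_E$ is a $\mathcal{B}\{1\}$-bundle 2-gerbe and is canonically trivial; under $\grb{\idmorph G}M\cong\idmorph{\bun{\mathcal{B}G}M}$ the gerbe $\mathcal{G}$ is just a principal $G$-bundle $E$, its trivialization $\mathbb{T}_{\mathcal{G}}$ is the tautological lift, and the reassembly via~\erf{eq:liftingtheory}---a lifting problem along the identity $L\upi_0\Gamma\eq LG\eq L\Gamma$---returns the looped bundle $LE$. This reproduces the looping map $L$, using $L\Gamma\eq LG$ from the Example. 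For $\Gamma\eq\mathcal{B}A$ we have $\upi_0\Gamma\eq1$, so $E$ is the trivial bundle and $\mathbb{L}_E$ has submersion $\id_M$; by Theorem~\ref{thm:lift} a trivialization of $\mathbb{L}_E$ is then nothing but a $\mathcal{B}A$-bundle gerbe over $M$, namely $\mathcal{G}$ itself, and transgressing it through the abelian transgression 2-functor is exactly Brylinski's transgression. With $L\Gamma\eq A$ from the Example, the reassembly step (a lift of the trivial-group bundle to $A$) returns this $\mathcal{B}A$-bundle over $LM$, so $\mathscr{T}$ reduces to $\tau$.

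The main obstacle I anticipate is not in any single calculation but in the bookkeeping of the two extremal identifications: one must verify that the abstract reassembly through Proposition~\ref{prop:translifting}(a) and the equivalence~\erf{eq:liftingtheory} genuinely degenerates---for $\idmorph G$ to the looping functor and for $\mathcal{B}A$ to Brylinski's $\tau$---rather than merely agreeing up to an uncontrolled isomorphism. Tracing the canonical trivializations $t_{\omega}$ and the identification $\phi_{E,\Gamma}$ in each degenerate case, and confirming compatibility with the group structure on $L\Gamma$ supplied by Theorem~\ref{th:central}, is where the care is required; contravariance in $M$ is then routine by comparison once the naturality of each factor is recorded.
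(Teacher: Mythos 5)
Your proposal is correct and follows essentially the same route as the paper: well-definedness and covariance in $\Gamma$ from Lemma \ref{lem:7} (with the $\rho$-independence from Lemma \ref{lem:6}), contravariance in $M$ by naturality of each ingredient, and the two extremal cases by observing that one of the two lifting problems degenerates ($E\eq M$ for $\mathcal{B}A$, trivial fibre for $\idmorph{G}$). Your extra remarks --- that an isomorphism $\mathcal{G}\cong\mathcal{G}'$ must be seen to transgress, and the explicit pullback-compatibility check --- only spell out steps the paper declares evident.
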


\begin{proof}
The well-definedness of the map $\mathscr{T}$ as well as the covariance in (i) follow from Lemma \ref{lem:7}. The contravariance in $M$ is evident. 
In (ii) we have, in the notation used above, $E\eq M$ and correspondingly $LE\eq LM$. This means that both the  lifting bundle 2-gerbe $\mathbb{L}_E$ and the lifting gerbe $\mathcal{L}_{LE,L\Gamma}$ are canonically trivial. Under these canonical identifications, we can identify $\mathcal{G}\eq \mathcal{S}$, where $\mathcal{S}$ is the bundle gerbe in $\mathbb{T}_{\mathcal{G}}$. Also, we can identify $\mathscr{T}_{\mathcal{S}}$ with $\mathscr{T}_{\mathcal{G}}$. But $\mathscr{T}_{\mathcal{S}}$ is the ordinary, abelian transgression which underlies the homomorphism $\tau$. In (iii), we have $A\eq*$, so that both lifting problems are trivial. In particular, $LE$ is the lift of the structure group of $LE$ from $LG$ to $LG$, i.e., $LE \eq \mathscr{T}_{\mathcal{G}}$. 
\end{proof}

For completeness, we include the following corollary.

\begin{corollary}
The following diagram is commutative:
\begin{equation*}
\alxydim{@C=0.55cm@R=1.2cm}{
\h^0(M,\idmorph{\upi_0\Gamma}) \ar[r]\ar[d]^L & \h^1(M,B\upi_1\Gamma) \ar[r]\ar[d]^\tau &  \h^1(M,\Gamma) \ar[r]\ar[d]^{\mathscr{T}} & \h^1(M,\idmorph{\upi_0\Gamma}) \ar[r]\ar[d]^L & \h^2(M,B\upi_1\Gamma)\ar[d]^\tau \\
\h^0(LM,\idmorph{L\upi_0\Gamma}) \ar[r] & \h^0(LM,\mathcal{B}\upi_1\Gamma) \ar[r] &  \h^0(LM,\mathcal{B}L\Gamma) \ar[r] & \h^0(LM,\mathcal{B}L\upi_0\Gamma) \ar[r] & \h^1(LM,\mathcal{B}\upi_1\Gamma)
}
\end{equation*}
\end{corollary}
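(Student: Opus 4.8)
The plan is to read the bottom row as the long exact sequence in \v Cech cohomology of the central extension
\[
1 \to \upi_1\Gamma \to L\Gamma \to L\upi_0\Gamma \to 1
\]
produced by Theorem \ref{th:central} (with $A \eq \upi_1\Gamma$ and $LG \eq L\upi_0\Gamma$, see \erf{eq:ex} and Definition \ref{def:loopgroup}), after translating via the identifications $\h^0(LM,\mathcal{B}H) \eq \h^1_{\class}(LM,H)$ and $\h^0(LM,\idmorph H) \eq \h^0_{\class}(LM,H)$. The top row is the tail of Breen's sequence (Proposition \ref{long}), whose maps carry the geometric descriptions established in Section \ref{sec:liftings}: the first is the connecting homomorphism $f \mapsto f^{*}\mathcal{G}_{\Gamma}$, the second is $i_{*}$, the third is $\pi_{*}$, and the last sends $E$ to the characteristic class of the lifting bundle 2-gerbe $\mathbb{L}_E$. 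With these descriptions fixed I would verify the four squares one at a time, always at the level of isomorphism (resp. characteristic) classes, so that only the existence of an isomorphism is needed in each case.

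For the rightmost square I would argue on characteristic classes: the abelian transgression $\tau$ of $[\mathbb{L}_E]\in\h^2(M,\mathcal{B}\upi_1\Gamma)$ is represented by the transgressed gerbe $\mathscr{T}_{\mathbb{L}_E}$, while the bottom connecting map sends $[LE]$ to the class of the lifting bundle gerbe $\mathcal{L}_{LE,L\Gamma}$. Proposition \ref{prop:translifting}(a) supplies exactly the isomorphism $\mathscr{T}_{\mathbb{L}_E}\cong\mathcal{L}_{LE,L\Gamma}$, so the two classes coincide and the square commutes. The leftmost square is dual: contravariance of transgression in the base gives $\mathscr{T}_{f^{*}\mathcal{G}_{\Gamma}}\cong (Lf)^{*}\mathscr{T}_{\mathcal{G}_{\Gamma}}$, and $\mathscr{T}_{\mathcal{G}_{\Gamma}}\eq L\Gamma$ by Definition \ref{def:loopgroup}; hence $\tau(f^{*}\mathcal{G}_{\Gamma})\cong (Lf)^{*}L\Gamma$, which is precisely the image of $Lf$ under the connecting map of the central extension.

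The two middle squares are more formal. For the $\pi_{*}$-square, recall that $\mathscr{T}_{\mathcal{G}}$ is by construction the $L\Gamma$-lift of $LE$ corresponding under the equivalence \erf{eq:liftingtheory} to the transgressed trivialization; being a lift means exactly that its extension along $L\Gamma \to L\upi_0\Gamma$ recovers $LE \eq L(\pi_{*}\mathcal{G})$, which is the commutativity required (cf.\ Theorem \ref{th:trans}(iii)). For the $i_{*}$-square I would specialize the construction of $\mathscr{T}$ to a reducible gerbe $i_{*}(\mathcal{H})$: then $\pi_{*}(i_{*}\mathcal{H})$ is trivial, so $\mathbb{L}_E$ and $\mathcal{L}_{LE,L\Gamma}$ are canonically trivial and, exactly as in the proof of Theorem \ref{th:trans}(ii), $\mathscr{T}_{i_{*}\mathcal{H}}$ reduces to the abelian transgression $\tau(\mathcal{H})$ reassembled through the trivial lifting, i.e.\ to the extension of the $\upi_1\Gamma$-bundle $\tau(\mathcal{H})$ along $\upi_1\Gamma \to L\Gamma$.

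The main obstacle is not any single square but the bookkeeping that makes the comparison well-posed: every transgression depends a priori on auxiliary connections, so one must know that passing to classes eliminates these choices and, more delicately, that the identifications in the connecting-map squares are compatible with one another and with the induced homomorphisms. This is precisely what Proposition \ref{prop:loopgroupindep}, the connection-independence of abelian transgression on classes, and above all Proposition \ref{prop:translifting}(b) together with Lemmas \ref{lem:3}, \ref{lem:5} and \ref{lem:7} provide. Once these coherence statements are invoked, each of the four squares commutes on classes and the diagram follows.
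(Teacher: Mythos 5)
Your proof is correct and follows essentially the same route as the paper: the paper disposes of the two middle squares by citing the functoriality statements of Theorem \ref{th:trans} (whose content you unpack directly via the lift/reduction descriptions of $\mathscr{T}$), and declares the two outer squares \quot{straightforward in ordinary \v Cech cohomology}, which is exactly the naturality of abelian transgression plus Proposition \ref{prop:translifting}(a) that you invoke. Your version is simply a more explicit expansion of the paper's two-sentence argument, with the connection-independence bookkeeping (Propositions \ref{prop:loopgroupindep}, \ref{prop:translifting}(b) and Lemmas \ref{lem:3}, \ref{lem:5}, \ref{lem:7}) correctly identified as the reason the comparison is well-posed on classes.
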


\begin{proof}
The two diagrams in the middle are commutative because of Theorem \ref{th:trans}. The commutativity of the outer diagrams is a statement in ordinary \v Cech cohomology and straightforward to verify. 
\end{proof}

\setsecnumdepth 1

%%%%%%%%%%%%%%%%%%%%%%%%%%%%%%%%%%%%%%%%%%%%%%%%%%%%%%%%%%%%%%%%%%%%%%%%%%%%%%%%%%%%%%%%%%%%%%%%%%%%%%%%%%%%%%%%%%%%%%%%%%%%%
\section{Application to String Structures}
%%%%%%%%%%%%%%%%%%%%%%%%%%%%%%%%%%%%%%%%%%%%%%%%%%%%%%%%%%%%%%%%%%%%%%%%%%%%%%%%%%%%%%%%%%%%%%%%%%%%%%%%%%%%%%%%%%%%%%%%%%%%%%

\label{sec:string}

We recall that $\str n$ is a topological group defined up to homotopy equivalence by requiring that it is a 3-connected cover of $\spin n$.  It is known that $\str n$ cannot be realized as a finite-dimensional Lie group, but as a infinite-dimensional Fréchet Lie group  \cite{nikolausb}. For several reasons, however, it is more attractive to work with \emph{Lie 2-group} models for $\str n$. 
\begin{definition}[{{\cite[Definition 4.10]{nikolausb}}}]
\label{def:string}
A \emph{2-group model for $\str n$} is a smoothly separable Lie 2-group $\Gamma$ such that
\begin{equation*}
\upi_0\Gamma \eq \spin n
\quand
\upi_1\Gamma \eq \ueins\text{,}
\end{equation*}
and such that the geometric realization $|\Gamma|$ has the homotopy type of $\str n$.
\end{definition}

The first 2-group model for $\str n$ has been constructed in \cite{baez9} using central extensions of loop groups. Another model has been provided in \cite{nikolausb} based on the above-mentioned Fréchet Lie group realization of $\str n$. A further construction  appears in \cite{waldorf14}. We remark that  the constructions of \cite{Henriques2008,pries2} are not 2-group models in the sense of Definition \ref{def:string}, since they are not \emph{strict} 2-groups.

The major motivation to look at the group $\str n$ comes from string theory; in particular, from fermionic sigma models.
2-group models for  $\str n$ are so attractive because they lead directly to (non-abelian) gerbes, which are in turn intimately related to string theory. Non-abelian gerbes for 2-group models for $\str n$ have been considered in  \cite{stevenson3,jurco1}, and can  be treated with the theory developed in \cite{NW11} and the present article.

In the following we describe an application of the lifting theory developed in Section \ref{sec:lift2gerbe} to string structures. Let $M$ be a  spin manifold of dimension $n$, i.e., the structure group of the frame bundle $FM$ of $M$ is lifted to $\spin n$. Topologically, a \emph{string structure on $M$} is a further lift of the structure group of $FM$ to $\str n$. Homotopy theory shows that the obstruction against this further lift is a certain class $\frac{1}{2}p_1(M) \in \mathrm{H}^4(M,\Z)$, and that  equivalence classes of string structures form a torsor over $\mathrm{H}^3(M,\Z)$. 

The topological definition of a string structure has an evident 2-group-counterpart.

\begin{definition}[{{\cite{stevenson3,jurco1}}}]
\label{def:ss1}
Suppose $\Gamma$ is a 2-group model for $\str n$. Then, a \emph{string structure on $M$} is a $\Gamma$-lift of $FM$ in the sense of Definition \ref{def:lift}, i.e., a $\Gamma$-bundle gerbe $\mathcal{S}$ over $M$ together with an isomorphism
$\varphi\maps  \pi_{*}(\mathcal{S}) \to FM$
of $\spin n$-bundles over $M$.
\end{definition}

String structures in the sense of Definition \ref{def:ss1} form a bigroupoid $\lift_{\Gamma}(FM)$. Via the equivalence between non-abelian gerbes and classifying maps \cite[Section 4]{NW11} one can easily show that Definition \ref{def:ss1} is a refinement of the topological notion of a string structure.

Another way to define string structures is to look at the obstruction class $\frac{1}{2}p_1(M)$. It can be represented by a Chern-Simons 2-gerbe $\mathbb{CS}_{E}(\mathcal{G})$ \cite[Theorem 1.1.3]{waldorf8}. We recall (also see Remark \ref{rem:cs}) that a Chern-Simons 2-gerbe receives as input data a principal $G$-bundle $E$ over $M$ and a multiplicative $\mathcal{B}\ueins$-bundle gerbe $\mathcal{G}$ over $G$. Here, with $G \eq \spin n$, the $G$-bundle $E$ is the frame bundle $FM$, and $\mathcal{G}$ can be any multiplicative $\mathcal{B}\ueins$-bundle gerbe over $\spin n$ with level one, i.e., characteristic class $[\mathcal{G}] \eq 1 \in \Z \eq \mathrm{H}^3(\spin n,\Z)$.  Now, the idea of the following definition is that a string structure is a trivialization of the obstruction against string structures.

\begin{definition}[{{\cite[Definition 1.1.5]{waldorf8}}}]
\label{def:ss2}
Let $\mathcal{G}$ be a multiplicative $\mathcal{B}\ueins$-bundle gerbe over $\spin n$ with level one. Then, a \emph{string structure on $M$} is a trivialization of the Chern-Simons 2-gerbe $\mathbb{CS}_{FM}(\mathcal{G})$. \end{definition}

String structures in the sense of Definition \ref{def:ss2} form a bigroupoid $\triv (\mathbb{CS}_{FM}(\mathcal{G}))$. We remark that a priori no string group or 2-group model for $\str n$ is involved in Definition \ref{def:ss2}. However, it depends on the input  of the multiplicative bundle gerbe $\mathcal{G}$.

As explained in \cite{waldorf8,waldorf14} there is a canonical way to produce such a multiplicative $\mathcal{B}\ueins$-bundle gerbe over $\spin n$ with level one: one starts with the basic gerbe $\mathcal{G}_{bas}$ over $\mathcal{B}\ueins$, which enjoys a finite-dimensional, Lie-theoretical construction \cite{meinrenken1,gawedzki1}. The multiplicative structure can be obtained by a transgression-regression procedure \cite{waldorf14}.

Another method to obtain the multiplicative bundle gerbe $\mathcal{G}$ is to start with a 2-group model for $\str n$. We infer from \cite[Remark 4.11]{nikolausb} the following. 
\begin{lemma}
\label{lem:levelone}
If $\Gamma$ is a 2-group model for $\str n$, then the multiplicative $\mathcal{B}\ueins$-bundle gerbe $\mathcal{G}_{\Gamma}$ introduced in Section \ref{sec:lift2gerbe} has level one. 
\end{lemma}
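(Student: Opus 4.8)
The plan is to identify the characteristic class $[\mathcal{G}_\Gamma]\in\mathrm{H}^3(\spin n,\Z)$ with the $k$-invariant of the topological extension obtained by geometrically realizing the fibre sequence \erf{sequence}, and then to read off that this $k$-invariant is a generator precisely because $|\Gamma|$ is a $3$-connected cover of $\spin n$. Since the normalization "level one" refers to the generator $1\in\Z\cong\mathrm{H}^3(\spin n,\Z)$, the whole content is to see that the gerbe-theoretic class $[\mathcal{G}_\Gamma]$ agrees with the homotopy-theoretic obstruction that defines $\str n$.

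First I would recall that, under the equivalence of Theorem \ref{thm:equiv}, the $\mathcal{B}\ueins$-bundle gerbe $\mathcal{G}_\Gamma$ corresponds to the principal $\mathcal{B}\upi_1\Gamma$-$2$-bundle $\Gamma\to\idmorph{\upi_0\Gamma}$ appearing in \erf{sequence}; its characteristic class in $\h^1(\spin n,\mathcal{B}\ueins)\cong\mathrm{H}^3(\spin n,\Z)$ is by definition the Dixmier--Douady class $[\mathcal{G}_\Gamma]$. Next I would apply the geometric realization functor to \erf{sequence}, which yields a fibration sequence of topological spaces
\begin{equation*}
B\ueins \to |\Gamma| \to \spin n,
\end{equation*}
using $|\mathcal{B}\ueins|\simeq B\ueins$ and $|\idmorph{\spin n}|\simeq\spin n$. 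Invoking the comparison between non-abelian gerbes and their classifying maps from \cite[Section 4]{NW11}, together with naturality of geometric realization, the classifying map of this fibration, regarded as an element of $[\spin n,B^2\ueins]\cong[\spin n,K(\Z,3)]\cong\mathrm{H}^3(\spin n,\Z)$, coincides with $[\mathcal{G}_\Gamma]$. This identification is exactly the content of \cite[Remark 4.11]{nikolausb}, which the proof can cite.

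Finally I would feed in the hypothesis of Definition \ref{def:string}: by assumption $|\Gamma|$ has the homotopy type of $\str n$, so the displayed fibration is a $3$-connected cover of $\spin n$. As $\pi_3(\spin n)\cong\Z$ and $\spin n$ is $2$-connected, such a cover has homotopy fibre $K(\Z,2)\simeq B\ueins$ and its classifying $k$-invariant is the class in $\mathrm{H}^3(\spin n,\Z)\cong\Z$ inducing the identity on $\pi_3$; this is a generator, equal to $1$ under the standard normalization. Combining with the previous paragraph gives $[\mathcal{G}_\Gamma]=1$, i.e. $\mathcal{G}_\Gamma$ has level one.

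The main obstacle is the middle step: establishing that geometric realization carries the \emph{geometric} characteristic class of the $2$-bundle $\Gamma$ (equivalently the Dixmier--Douady class of $\mathcal{G}_\Gamma$) to the \emph{topological} $k$-invariant of the realized fibration $B\ueins\to|\Gamma|\to\spin n$. This compatibility between the bicategorical classification of $\Gamma$-$2$-bundles and ordinary obstruction theory is the substantive input; it is supplied by \cite[Section 4]{NW11} and recorded in \cite[Remark 4.11]{nikolausb}, so in the body of the proof it may simply be cited rather than re-derived, making the argument itself short.
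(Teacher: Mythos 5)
Your argument is correct and matches the paper's approach: the paper proves this lemma solely by citing \cite[Remark 4.11]{nikolausb}, which is precisely the identification of $[\mathcal{G}_\Gamma]$ with the $k$-invariant of the realized fibration $B\ueins\to|\Gamma|\to\spin n$ that you spell out. Your write-up is a faithful expansion of that citation, and your closing remark correctly isolates the one substantive input (compatibility of the geometric characteristic class with the topological classifying map) as the content of the cited remark.
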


As noted in Remark \ref{rem:cs}, we have the coincidence
\begin{equation*}
\mathbb{CS}_{FM}(\mathcal{G}_{\Gamma}) \eq \mathbb{L}_{FM}\text{,}
\end{equation*}
i.e., the Chern-Simons 2-gerbe \emph{is} the lifting bundle 2-gerbe for the problem of lifting the structure group of $FM$ from $\spin n$ to $\Gamma$. Now, Theorem \ref{thm:lift} becomes as follows.
\begin{theorem}
\label{th:coinc}
The two notions of string structures from  Definition \ref{def:ss1} and Definition \ref{def:ss2} coincide. More precisely, for any 2-group model $\Gamma$ for $\str n$ there is an equivalence of bigroupoids:
\begin{equation*}
\triv(\mathbb{CS}_{FM}(\mathcal{G}_{\Gamma})) \cong \lift_{\Gamma}(FM) \text{.}
\end{equation*}
\end{theorem}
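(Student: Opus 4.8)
The plan is to deduce this coincidence by combining the general lifting theorem of Section~\ref{sec:lift2gerbe} with the explicit identification of the two relevant bundle $2$-gerbes, so that essentially no new argument is required beyond matching hypotheses. The frame bundle $FM$ is a principal $\spin n$-bundle over $M$, and since $\Gamma$ is a $2$-group model for $\str n$ we have $\upi_0\Gamma \eq \spin n$ and $\upi_1\Gamma \eq \ueins$. Hence $FM$ is precisely a principal $\upi_0\Gamma$-bundle, and by Definition~\ref{def:ss1} the bigroupoid of string structures in that sense is exactly $\lift_\Gamma(FM)$ as introduced in Definition~\ref{def:lift}.

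The core step is to recognise that the Chern-Simons $2$-gerbe appearing in Definition~\ref{def:ss2} coincides with the lifting bundle $2$-gerbe $\mathbb{L}_{FM}$. First I would invoke Lemma~\ref{lem:levelone} to confirm that the multiplicative $\mathcal{B}\ueins$-bundle gerbe $\mathcal{G}_\Gamma$ has level one, so that it is an admissible input for the construction of $\mathbb{CS}_{FM}(\mathcal{G}_\Gamma)$ demanded by Definition~\ref{def:ss2}. Then, applying Remark~\ref{rem:cs} in the special case $G \eq \spin n \eq \upi_0\Gamma$ and $\ueins \eq \upi_1\Gamma$, the two constructions agree on the nose, $\mathbb{CS}_{FM}(\mathcal{G}_\Gamma) \eq \mathbb{L}_{FM}$. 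In particular their bigroupoids of trivializations are literally equal, $\triv(\mathbb{CS}_{FM}(\mathcal{G}_\Gamma)) \eq \triv(\mathbb{L}_{FM})$.

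It then remains to apply Theorem~\ref{thm:lift} to the principal $\upi_0\Gamma$-bundle $E \df FM$, which supplies an equivalence of bigroupoids $\triv(\mathbb{L}_{FM}) \cong \lift_\Gamma(FM)$. Composing this with the equality obtained in the previous step yields $\triv(\mathbb{CS}_{FM}(\mathcal{G}_\Gamma)) \cong \lift_\Gamma(FM)$, which is exactly the asserted coincidence of the two notions of string structure.

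The substantial mathematical content has already been established upstream: Theorem~\ref{thm:lift} promotes the set-theoretic obstruction statement to an equivalence of bigroupoids, while Remark~\ref{rem:cs} encodes the fact that the lifting $2$-gerbe built from $\mathcal{G}_\Gamma$ reproduces the Chern-Simons $2$-gerbe used to define string structures in \cite{waldorf8}. Consequently the only genuine verification here is that the hypotheses of those results are met for $E \df FM$ and $\mathcal{G} \df \mathcal{G}_\Gamma$ — chiefly the level-one condition furnished by Lemma~\ref{lem:levelone}. I therefore anticipate no serious obstacle; the role of this theorem is the bookkeeping that threads the general machinery through the specific geometry of the frame bundle.
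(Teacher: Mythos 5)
Your proposal is correct and follows exactly the paper's own route: the paper deduces Theorem \ref{th:coinc} immediately from the identity $\mathbb{CS}_{FM}(\mathcal{G}_{\Gamma})\eq\mathbb{L}_{FM}$ of Remark \ref{rem:cs} (with Lemma \ref{lem:levelone} ensuring $\mathcal{G}_{\Gamma}$ is an admissible level-one input) together with the equivalence $\triv(\mathbb{L}_{FM})\cong\lift_{\Gamma}(FM)$ of Theorem \ref{thm:lift}. There is nothing to add.
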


A third (inequivalent) definition is  to say that a string structure on $M$ is the same as a \emph{spin structure} on the free loop space $LM$ \cite{mclaughlin1}. Here we require $n>1$ in order to make $\spin n$ connected. 

\begin{definition}
A \emph{spin structure on $LM$} is a lift of the structure group of the looped frame bundle $LFM$ from $L\spin n$ to the universal central extension
\begin{equation*}
1 \to \ueins \to \widehat{L\spin n} \to L\spin n \to 1\text{.}
\end{equation*}
\end{definition}

The relation between string structures on $M$ and spin structures on $LM$ is based on the following fact concerning the loop group of a 2-group model for the string group, see Definition \ref{def:loopgroup}.
\begin{lemma}
\label{lem:loopstring}
Let $\Gamma$ be a 2-group model for $\str n$. Then, $L\Gamma \cong \widehat{L\spin n}$. 
\end{lemma}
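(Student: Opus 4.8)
The plan is to recognise both sides as central extensions of $L\spin n$ by $\ueins$ and to identify them by comparing their levels. By Definition~\ref{def:loopgroup} we have $L\Gamma = L\mathcal{G}_{\Gamma}$, where $\mathcal{G}_{\Gamma}$ is the multiplicative $\mathcal{B}\ueins$-bundle gerbe over $\upi_0\Gamma = \spin n$. Since $\upi_0\Gamma$ is compact (for $n>1$ it is moreover connected and simply connected), Theorem~\ref{th:central} applies and exhibits $L\Gamma$ as a central extension
\begin{equation*}
1 \to \ueins \to L\Gamma \to L\spin n \to 1
\end{equation*}
of Fr\'echet Lie groups. By definition $\widehat{L\spin n}$ is the universal central extension of $L\spin n$ by $\ueins$, so both groups appear as central extensions of $L\spin n$ by the same abelian group; it therefore suffices to prove that these two extensions are isomorphic.

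First I would recall that, in the relevant range, $\spin n$ is compact and simply connected with $\mathrm{H}^3(\spin n,\Z) \cong \Z$, and that isomorphism classes of $\ueins$-central extensions of $L\spin n$ are detected by their level in this group, with the basic (level one) extension being the universal one in the sense of the definition preceding the lemma (Pressley--Segal). Thus the statement reduces to showing that the central extension $L\Gamma = L\mathcal{G}_{\Gamma}$ produced by transgression has level one.

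The core of the argument is that transgression matches levels: the Fr\'echet Lie group $L\mathcal{G}^{\lambda,\eta}$ of Theorem~\ref{th:central} is the central extension whose level equals the characteristic class $[\mathcal{G}] \in \mathrm{H}^3(G,\Z)$ of the multiplicative gerbe it transgresses. This compatibility is part of the transgression--regression correspondence of \cite[Theorem 3.1.7]{waldorf5}, and on the level of the underlying Lie-algebra cocycle it goes back to \cite{brylinski1}. Concretely, the multiplicative connection $(\lambda,\eta)$ determines the curvature data $\rho$ that, after transgression, furnishes the cocycle of the extension, and its cohomology class is exactly the image of $[\mathcal{G}]$. By Lemma~\ref{lem:levelone} the gerbe $\mathcal{G}_{\Gamma}$ has level one, so $L\Gamma = L\mathcal{G}_{\Gamma}$ is the level-one central extension of $L\spin n$, which is $\widehat{L\spin n}$.

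The hard part will be the level-matching step, i.e.\ translating the gerbe-theoretic notion of level (the class in $\mathrm{H}^3$) into the group-theoretic isomorphism class of the resulting central extension. Rather than recompute the transgressed cocycle by hand, I would extract this from the functoriality of the transgression functor $\mathscr{T}$ together with the explicit description of $L\mathcal{G}^{\lambda,\eta}$ in \cite{waldorf5}, where the curvature $\rho$ of a multiplicative connection controls the central extension cocycle and hence its level. Once the levels are shown to agree, the classification of $\ueins$-central extensions of $L\spin n$ by their level reduces the claim to the equality of two integers, both equal to one, giving $L\Gamma \cong \widehat{L\spin n}$.
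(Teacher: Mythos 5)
Your proposal is correct and follows essentially the same route as the paper: the paper's proof likewise invokes Lemma \ref{lem:levelone} to see that $\mathcal{G}_{\Gamma}$ has level one and then cites \cite[Corollary 3.1.9]{waldorf5}, which is precisely the statement that the transgression of a level-one multiplicative gerbe is the universal central extension of the loop group. The level-matching discussion you flag as the hard part is exactly what that cited corollary packages up, so no further work is needed.
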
 

\begin{proof}
By Lemma \ref{lem:levelone} the multiplicative bundle gerbe $\mathcal{G}_{\Gamma}$ has level one. By \cite[Corollary 3.1.9]{waldorf5}, its transgression $\mathscr{T}_{\mathcal{G}_{\Gamma}} \eq\maps  L\Gamma$ is the universal central extension of $L\spin n$.
\end{proof}

Let us first recall how string structures in the sense of Definition \ref{def:ss2} induce a spin structure on $LM$. For this purpose, we simply reduce the procedure described in Section \ref{sec:transgression} to the case where $\Gamma$ is a 2-group model for $\str n$, reproducing a description given in \cite{waldorf2009}. We choose a multiplicative connection on $\mathcal{G}_{\Gamma}$, and a connection on the frame bundle $FM$, for instance the Levi-Cevita connection. By Proposition \ref{prop:translifting}, the transgression $\mathscr{T}_{\mathbb{CS}_{FM}(\mathcal{G})}$ is the lifting bundle gerbe for the problem of lifting the structure group of $LFM$ from $L\spin n$ to $L\Gamma$. 

If now $\mathbb{T}$ is a trivialization of $\mathbb{CS}_{FM}(\mathcal{G}_{\Gamma})$, it admits a connection compatible with the connection on $\mathbb{CS}_{FM}(\mathcal{G}_{\Gamma})$, and transgresses to a trivialization $\mathscr{T}_{\mathbb{T}}$ of $\mathscr{T}_{\mathbb{CS}_{FM}(\mathcal{G}_{\Gamma})}$, which is precisely a spin structure on $LM$. Lemma \ref{lem:7} shows that we get a well-defined map
\begin{equation}
\label{eq:trans1}
\bigset{4.3cm}{Isomorphism classes of trivializations of $\mathbb{CS}_{FM}(\mathcal{G}_{\Gamma})$} \to 
\bigset{3.5cm}{Isomorphism classes of spin structures on $LM$} \text{.}
\end{equation}

\begin{remark}
It is not possible to upgrade this map to a functor between groupoids, because connections cannot be chosen in a functorial way. However, one can include the connections into the structure on both hand sides, and so obtain a functor between a groupoid of \emph{geometric string structures} on $M$ and a groupoid of \emph{geometric spin structures} on $LM$.
\end{remark} 

Now suppose we have a string structure in the sense of Definition \ref{def:ss1}, i.e., a $\Gamma$-bundle gerbe $\mathcal{S}$ together with a bundle morphism $\varphi \maps  \pi_{*}(\mathcal{S}) \to FM$. In non-abelian cohomology, this is a class $[\mathcal{S}] \in \h^1(M,\Gamma)$ such that $\pi_{*}([\mathcal{G}]) \eq [FM] \in \h^0(M,\mathcal{B}\spin n)$. Applying the transgression map
\begin{equation*}
\mathscr{T}\maps  \h^1(M,\Gamma) \to \h^0(LM,\mathcal{B}\widehat{L\spin n})
\end{equation*}
of Theorem \ref{th:trans} produces a class $\mathscr{T}([\mathcal{S}]) \in \h^0(LM,\mathcal{B}\widehat{L\spin n})$. Theorem \ref{th:trans} (ii) and (iv) imply that the extension of this class is $[LFM] \in \h^0(LM,\mathcal{B}\spin n)$, i.e., $\mathscr{T}([\mathcal{S}])$ is an isomorphism class of spin structures on $LM$. Summarizing, we have a map
\begin{equation}
\label{eq:trans2}
\bigset{3.1cm}{Isomorphism classes of $\Gamma$-lifts of $FM$} \to 
\bigset{3.5cm}{Isomorphism classes of spin structures on $LM$} \text{.}
\end{equation}

The two maps \erf{eq:trans1} and \erf{eq:trans2} are compatible with the equivalence of Theorem \ref{th:coinc} in the following sense.

\begin{theorem}
\label{th:stringtrans}
There is a commutative diagram:
\begin{equation*}
\alxydim{@C=-0.5cm@R=1.8cm}{\left \lbrace\;\txt{Isomorphism classes of\\string structures on $M$\\ in the sense of Definition \ref{def:ss2}\\\\}\; \right \rbrace \ar[dr]_{\erf{eq:trans1}} \ar@<0.1cm>[rr]^-{\text{Theorem \ref{th:coinc}}} && \ar@<0.1cm>[ll] \left \lbrace\;\txt{Isomorphism classes of\\string structures on $M$\\ in the sense of Definition \ref{def:ss1}\\\\}\; \right \rbrace \ar[dl]^{\erf{eq:trans2}} \\ &\left \lbrace\;\txt{Isomorphism classes of\\spin structures on $LM$\\\\}\; \right \rbrace&}
\end{equation*}
\end{theorem}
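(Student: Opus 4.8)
The plan is to unwind the definitions of the two maps \erf{eq:trans1} and \erf{eq:trans2} and to observe that the triangle commutes essentially by construction. Since the top horizontal arrow is the bijection underlying the equivalence of Theorem \ref{th:coinc}, it suffices to start from an isomorphism class of $\Gamma$-lifts $\mathcal{S} \in \lift_{\Gamma}(FM)$ (a string structure in the sense of Definition \ref{def:ss1}) and to compare its image under \erf{eq:trans2} with the image under \erf{eq:trans1} of the trivialization corresponding to it. Throughout I would use the two identifications already established: $\mathbb{CS}_{FM}(\mathcal{G}_{\Gamma}) \eq \mathbb{L}_{FM}$ from Remark \ref{rem:cs}, so that both notions of string structure refer to the \emph{same} abelian bundle 2-gerbe, and $L\Gamma \cong \widehat{L\spin n}$ from Lemma \ref{lem:loopstring}, so that the targets of the two transgressions are genuinely spin structures on $LM$.

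First I would trace the right-hand path. Theorem \ref{th:coinc} is exactly Theorem \ref{thm:lift} for $E \df FM$ composed with the identification of Remark \ref{rem:cs}; under it, the $\Gamma$-lift $\mathcal{S}$ corresponds to a trivialization $\mathbb{T}_{\mathcal{S}}$ of $\mathbb{L}_{FM}$. But this is precisely the trivialization appearing in the definition of the transgression map $\mathscr{T}$ of Theorem \ref{th:trans}: by construction $\mathscr{T}_{\mathcal{S}}$ is obtained by choosing a connection $\chi$ on $\mathbb{L}_{FM}$ and a compatible connection on $\mathbb{T}_{\mathcal{S}}$, transgressing to a trivialization of $\mathscr{T}_{\mathbb{L}_{FM}^{\chi}}$, and then transporting along the canonical isomorphism $\phi_{FM,\Gamma}\maps \mathscr{T}_{\mathbb{L}_{FM}} \to \mathcal{L}_{LFM,L\Gamma}$ of Proposition \ref{prop:translifting}(a) together with the lifting-theory equivalence \erf{eq:liftingtheory}. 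Thus \erf{eq:trans2} applied to $\mathcal{S}$ is nothing but the transgression of the trivialization $\mathbb{T}_{\mathcal{S}}$.

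Second I would observe that the left-hand path performs the same operation. The map \erf{eq:trans1}, applied to the string structure $\mathbb{T}_{\mathcal{S}}$ in the sense of Definition \ref{def:ss2}, is defined by choosing a connection on $\mathbb{CS}_{FM}(\mathcal{G}_{\Gamma}) \eq \mathbb{L}_{FM}$, transgressing $\mathbb{T}_{\mathcal{S}}$ to a trivialization of $\mathscr{T}_{\mathbb{CS}_{FM}(\mathcal{G}_{\Gamma})} \eq \mathscr{T}_{\mathbb{L}_{FM}}$, and reading off the resulting spin structure through the same identification $\mathscr{T}_{\mathbb{L}_{FM}} \cong \mathcal{L}_{LFM,L\Gamma}$. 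Hence both composites produce the transgression of one and the same trivialization $\mathbb{T}_{\mathcal{S}}$, and the triangle commutes on the class of $\mathcal{S}$.

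The only point requiring care, and the place where I expect to spend the most effort, is that the two paths choose auxiliary connections (on $\mathbb{L}_{FM}$, on $\mathbb{T}_{\mathcal{S}}$, and the multiplicative connection on $\mathcal{G}_{\Gamma}$) independently, so a priori the two connection-dependent constructions could differ. This is exactly what Lemma \ref{lem:6} and Lemma \ref{lem:7} control: they show that the transgressed object is independent of all connection choices up to canonical isomorphism, so the comparison descends to isomorphism classes and the two constructions agree there. No further geometric input is needed; the content of the theorem is that the general transgression machinery of Section \ref{sec:transgression}, specialized to a 2-group model $\Gamma$ for $\str n$ via Lemma \ref{lem:loopstring}, reproduces verbatim the explicit loop-space construction underlying \erf{eq:trans1}.
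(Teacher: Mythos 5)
Your proposal is correct and follows the same route as the paper's own (much terser) proof: both arguments observe that the map \erf{eq:trans2} is by construction the composition of the inverse of the bijection of Theorem \ref{th:coinc} with the map \erf{eq:trans1}, so the triangle commutes by unwinding definitions. Your additional care about the independence of auxiliary connection choices, controlled by Lemmas \ref{lem:6} and \ref{lem:7}, is exactly the point the paper leaves implicit (having already used those lemmas to show both maps are well defined on isomorphism classes).
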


\begin{proof}
Parsing through the constructions, the map \erf{eq:trans2} (transgression in non-abelian cohomology) was defined by passing to the associated trivialization of the lifting 2-gerbe and then transgressing in that abelian setting. The latter procedure is the composition of the inverse of the bijection of Theorem \ref{th:coinc} with the map \erf{eq:trans1}; thus, the diagram is commutative. 
\end{proof}

\kobib{.}

\end{document}